\let\Gamma\varGamma
\patchcmd{\section}{\scshape}{\bfseries}{}{}
\renewcommand{\@secnumfont}{\bfseries}
\newcommand{\Nb}{\mathbb{N}}
\newcommand{\Rb}{\mathbb{R}}
\newcommand{\Hb}{\mathbb{H}}
\newcommand{\Sym}{\mathrm{Sym}}
\DeclareMathOperator*{\rank}{\mathrm{rank}}
\DeclareMathOperator{\tr}{\mathrm{tr}}
\newcommand{\Ac}{\mathcal{A}}
\newcommand{\Bc}{\mathcal{B}}
\newcommand{\Cc}{\mathcal{C}}
\newcommand{\Hc}{\mathcal{H}}
\newcommand{\ee}{\mathrm{e}}
\newcommand{\<}{\langle}
\renewcommand{\>}{\rangle}
\newcommand{\der}{\mathrm{d}}
\DeclareMathOperator{\dd}{d\hspace{-2.5pt}}
\newcommand{\dt}{\partial_t}
\newcommand{\Lap}{\Updelta}
\newcommand{\D}{\mathrm{D}\mkern 1mu}
\newcommand{\gMetric}{\mathrm{g}}
\newcommand{\gM}{\gMetric_M}
\newcommand{\gN}{\gMetric_N}
\newcommand{\gMN}{\gMetric_{M\times N}}
\newcommand{\gind}{\gMetric}
\newcommand{\gNorm}{\gMetric^{\perp}}
\newcommand{\h}{\mathrm{h}}
\newcommand{\tgind}{\widetilde{\gind}}
\newcommand{\sTensor}{\mathrm{s}}
\newcommand{\sMN}{\sTensor_{M\times N}}
\newcommand{\sind}{\sTensor}
\newcommand{\sN}{\sTensor^{\perp}}
\newcommand{\piM}{\pi_M}
\newcommand{\piN}{\pi_N}
\newcommand{\RpiM}{\pi_{\Rb^m}}
\newcommand{\Rm}{\mathrm{R}}
\newcommand{\Rind}{\Rm}
\newcommand{\RmN}{\Rm_N}
\newcommand{\RmMN}{\Rm_{\Rb^m\times N}}
\DeclareMathOperator{\Ric}{Ric}
\newcommand{\A}{\mathrm{A}}
\newcommand{\Hv}{\vv{H}} 			
\newcommand{\RgM}{\gMetric_{\Rb^m}}
\newcommand{\RgN}{\gMetric_N}
\newcommand{\RgMN}{\gMetric_{\Rb^m\times N}}
\newcommand{\RsMN}{\sTensor_{\Rb^m\times N}}
\newcommand{\nN}{\nabla^{\perp}}
\newcommand{\LapN}{\Updelta^{\perp}}
\newcommand{\prN}{\mathrm{pr}^{\perp}}
\newcommand{\sv}{\lambda} 
\DeclareSymbolFont{UPM}{U}{eur}{m}{n}
\DeclareMathSymbol{\partial}{0}{UPM}{"40}
\newcommand{\fScaled}[1]{\widetilde{f}_{#1}}
\newcommand{\tScaled}{r}
\newcommand{\xScaled}{y}
\newtheorem{mainthm}{Theorem}
\newtheorem{theorem}{Theorem}
\newtheorem{lemma}[theorem]{Lemma}
\newtheorem{corollary}[theorem]{Corollary}
\newtheorem{proposition}[theorem]{Proposition}
\theoremstyle{definition}
\newtheorem{remark}[theorem]{Remark}
\newtheorem*{remark*}{Remark}
\newtheorem{definition}[theorem]{Definition}
\newtheorem*{definition*}{Definition}
\newtheorem{example}[theorem]{Example}
\numberwithin{equation}{section}
\numberwithin{theorem}{section}
\title[Contractions between Non-Compact Manifolds]{Evolution of Contractions\\between Non-Compact Manifolds}
\author{Felix Lubbe}
\date{\today}
\keywords{mean curvature flow, length-decreasing maps, non-compact manifold}
\subjclass[2010]{Primary 53C44; 53C42, 53C21}
\begin{document}

\begin{abstract}
	Let $N$ be a complete manifold with bounded geometry, such that
	$\sec_N\le -\sigma < 0$ for some positive constant $\sigma$.
	We investigate the mean curvature flow of the graphs
	of smooth length-decreasing maps $f:\mathbb{R}^m\to N$.
	In this case, the solution exists for all times and the evolving submanifold
	stays the graph of a length-decreasing map $f_t$.
	We further prove uniform decay estimates for all derivatives
	of order $\ge 2$ of $f_t$ along the flow.
\end{abstract}

\maketitle
\tableofcontents

\section{Introduction}

\setlength{\mathindent}{1.5cm}

Let $(M,\gM)$ and $(N,\gN)$ be complete Riemannian manifolds, and consider a smooth map $f:M\to N$.
The map $f$ is called \emph{strictly length-decreasing}
or \emph{contraction}, if there exists a fixed $\delta\in(0,1]$, 
such that
\begin{equation}
	\label{eq:LengthDecr}
	\|\dd f(v)\|_{\gN} \le (1-\delta)\|v\|_{\gM}
\end{equation}
holds for all $v\in\Gamma(TM)$.
In the present paper, we want to deform the map $f$ by deforming its graph
\[
	\Gamma(f) \coloneqq \big\{ (x,f(x)) \in M\times N : x\in M \big\}
\]
via the mean curvature flow inside the product space $M\times N$. That is, we
consider the system
\[
	\partial_t F_t(x) = \Hv(x,t) \,, \qquad F_0(M) = \bigl( x, f(x) \bigr) \,,
\]
where $\Hv(x,t)$ denotes the mean curvature vector of the submanifold $F_t(M)$ in $M\times N$
at $F_t(x)$.
A smooth solution to the mean curvature flow for which $F_t(M)$ is a graph for $0<T_g\le \infty$
can be described completely in terms of a smooth
family of maps $f_t:M\to N$, with $f_0=f$.
In the case of long-time existence of the graphical solution (i.\,e.\ $T_g=\infty$) and convergence, we would
thus obtain a smooth homotopy from $f$ to a minimal map $f_{\infty}:M\to N$. \\

In the compact case, there are many results for length- and area-decreasing maps
(see e.\,g.\ \cite{LL11,SHS13,SHS14,SHS15,Smo04,TW04,Wan01a,Wan01b} and references therein). 
For example, if $f:M\to N$ is strictly area-decreasing, $M$ and $N$ are space forms with $\dim M\ge 2$
subject to the relations
\[
	\sec_M \ge |\sec_N|\,, \qquad \sec_M+\sec_N > 0 \,,
\]
Wang and Tsui proved long-time existence of the graphical mean curvature flow and convergence
of $f_t$ to a constant map \cite{TW04}. The curvature assumptions were then weakened by
Lee and Lee \cite{LL11} and Savas-Halilaj and Smoczyk \cite{SHS14}.

In the non-compact case, Ecker and Huisken considered the flow of hypersurfaces in $\Rb^{n+1}$ and entire graphs with
at most linear growth and provided conditions under
which the initial graphs asymptotically approach self-expanding solutions \cite{EH89}. 
In fact, the growth assumption for the long-time existence theorem can be removed, 
so that only a Lipschitz condition on the initial graph is required \cite{EH91}.
In the higher-codimensional setting, however, due to the complexity of the normal bundle of the graph,
the methods of Ecker and Huisken cannot be applied.

Nevertheless, by considering the Gau{\ss} map of the immersion, several results in this setting were obtained
(see e.\,g.\ \cite{Wan03b,Wan05}).
When considering two-dimensional graphs, Chen, Li and Tian
established long-time existence and convergence results by evaluating angle 
functions on the tangent bundle \cite{CLT02}.
Further, there are some results 
showing long-time existence and convergence of the flow under smallness conditions
on the differential of the defining map \cite{CCH12,CCY13,SHS15}.

For Lagrangian graphs $\Gamma(f)\subset\Rb^m\times\Rb^m$ 
generated by Lipschitz continuous functions $f:\Rb^m\to\Rb^m$,
Chau, Chen and He showed
short-time existence of solutions with bounded geometry,
as well as decay estimates for the mean curvature vector and all higher-order derivatives
of the defining map, which imply the long-time existence of the solution \cite{CCH12}. 
This result was generalized in \cite{CCY13} by relaxing the length-decreasing condition
and subsequently by the author to strictly length-decreasing maps between Euclidean spaces
of arbitrary dimension \cite{Lub16}. A similar theorem also holds if one considers
the mean curvature flow of strictly area-decreasing maps between two-dimensional
Euclidean spaces \cite{Lub18}. \\

The aim of the present article is to prove estimates and long-time existence for strictly length-decreasing
maps  $f:\Rb^m\to N$, where $(N,\gN)$ is a complete Riemannian manifold with \emph{bounded geometry},
i.\,e.\ for any integer $k\ge 0$ we have
\[
	\sup_{x\in N} \|\nabla^k \RmN(x)\| < \infty
\]
and the injectivity radius satisfies $\mathrm{inj}(N)>0$. Similarly, 
a map $f:M\to N$ between two complete Riemannian manifolds $(M,\gM)$ and $(N,\gN)$ 
has \emph{bounded geometry}, if it satisfies
\[
	\sup_{x\in M} \|\nabla^k \dd f(x) \| < \infty \quad \text{for all} \quad k\ge 0 \,.
\]

Let us remark that the length-decreasing condition \eqref{eq:LengthDecr} is 
essentially measured by the difference 
\[
	\sind \coloneqq \RgM - f^*\gN
\]
of the two metrics $\RgM$ and $\gN$. 
The estimate for the eigenvalues of $\sind$ in the following theorem is given in terms
of an average, as given by
\[
	\tr(\sind) = \sum_{i,j=1}^m \gind^{ij} \sind_{ij} \,,
\]
where $\gind$ is the induced metric on the graph $\Gamma(f)\subset \Rb^m\times N$.

\begin{mainthm}
	\label{thm:ThmA}
	Let $(N,\gN)$ be a complete Riemannian
	manifold with bounded geometry.
	Assume that $N$ has negative sectional curvature, i.\,e.\ there is $\sigma>0$ with
	\[
		\sec_N \le -\sigma \,.
	\]
	Further, let $f:\Rb^m\to N$ be a smooth strictly length-decreasing map with bounded geometry.
	Then the mean curvature flow with initial condition $F_0(x)\coloneqq\bigl(x,f(x)\bigr)$
	has a long-time solution such that the following statements hold.
	\begin{enumerate}[label=(\roman*)]
		\item\label{mainthm:i} The evolving submanifold stays the graph of a strictly length-decreasing map
			$f_t:\Rb^m\to N$ for all $t>0$. 
		\item\label{mainthm:ii} The trace $\tr(\sind)$ is non-decreasing in time. If $m>1$ and $\inf_{\Rb^m\times\{0\}}\tr(\sind)<m-1$,
			then the estimate
			\[
				\tr(\sind) \ge \frac{C_1 (m-1) \exp\left(\frac{\sigma}{2}t\right) - m}{C_1\exp\left(\frac{\sigma}{2}t\right)-1}
			\]
			holds, where
			\[
				C_1 \coloneqq 1 + \frac{1}{(m-1)-\inf_{\Rb^m\times\{0\}}\tr(\sind)} \,.
			\]
		\item\label{mainthm:iii} The mean curvature vector of the graph stays bounded, i.\,e.\ there
			is a constant $C_2\ge 0$, such that
			\[
				\|\Hv\|^2 \le C_2 \,.
			\]
		\item \label{mainthm:iv} All spatial derivatives of order $k\ge 2$ of $f_t$ satisfy the estimate
			\[
				t^{k-1} \sup_{x\in\Rb^m} \bigl\| \nabla^{k-1} \dd f_t(x) \bigr\|^2 \le C_{k,\delta} \qquad \text{for all} \; k\ge 2
			\]
			and for some constants $C_{k,\delta}\ge 0$ depending only on $k$ and
			$\delta$.
	\end{enumerate}
\end{mainthm}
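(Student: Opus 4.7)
The plan is to establish each part via evolution equations for tensors on the evolving graph, combined with maximum principles adapted to the non-compact setting. The bounded-geometry hypotheses on $N$ and on $f_0$ provide the control needed to invoke an Omori--Yau type maximum principle, or alternatively parabolic cut-offs on the domain $\Rb^m$ in the spirit of Ecker--Huisken. Short-time existence of a solution with bounded geometry is a prerequisite and should follow from standard parabolic theory adapted to this setting.

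For \ref{mainthm:i}, the natural quantity is the difference tensor $\sind = \RgM - f^*\gN$, whose positivity encodes the strict length-decreasing property. I would derive the evolution equation $(\dt - \Delta)\sind$, which contains a quadratic expression in the second fundamental form $\A$ together with a curvature contribution from $N$. The hypothesis $\sec_N\le 0$ makes the latter term favourable, and a null-eigenvector argument takes care of the former. A tensor maximum principle then preserves the strict positivity of $\sind$, which simultaneously ensures that the flow remains graphical and that $f_t$ stays strictly length-decreasing.

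For \ref{mainthm:ii}, tracing the evolution equation for $\sind$ and exploiting $\sec_N\le -\sigma$ should yield, whenever $\tr\sind<m-1$, an inequality of the form
\[
	(\dt - \Delta)\tr\sind \ge \tfrac{\sigma}{2}(m-\tr\sind)(m-1-\tr\sind) + \text{(non-negative terms)}.
\]
The associated ODE $\dot u = \tfrac{\sigma}{2}(m-u)(m-1-u)$ linearises under the substitution $v:=(m-u)/(m-1-u)$ to $\dot v = \tfrac{\sigma}{2}v$, giving $v(t)=C_1\ee^{\sigma t/2}$ with $C_1 = v(0) = 1 + \frac{1}{(m-1)-\inf\tr\sind}$, and inverting produces the stated lower bound. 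For \ref{mainthm:iii}, I would derive a parabolic inequality for $\|\Hv\|^2$, or for a suitable combination of $\|\Hv\|^2$ with an auxiliary function of $\tr\sind$, arranged so that the reaction terms are absorbed using the bound from \ref{mainthm:ii}; the initial bounded-geometry hypothesis on $f_0$ provides the $C^0$ bound at $t=0$, which is then propagated by the maximum principle.

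For \ref{mainthm:iv}, I would run a Bernstein--Shi type induction on $k$. At order $k$ one forms a test quantity of the shape $t^{k-1}\|\nabla^{k-1}\dd f_t\|^2$ plus lower-order correction terms scaled by appropriate powers of $t$, computes its evolution, and shows that the higher-order cubic and quartic error terms are absorbed using the inductive hypothesis together with parts \ref{mainthm:i}--\ref{mainthm:iii}. The main obstacle throughout is the non-compactness of the domain: the standard maximum principle on a closed manifold is unavailable, so at each step one must verify that bounded geometry of the flowing graph persists, otherwise the Omori--Yau argument cannot be iterated. This bootstrap is delicate because the evolution equations couple $\sind$, $\A$, and its higher covariant derivatives, but it succeeds thanks to the favourable sign from $\sec_N\le -\sigma<0$ and the strict length-decreasing assumption.
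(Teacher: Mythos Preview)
Your outline for \ref{mainthm:i} and \ref{mainthm:ii} matches the paper: the evolution of $\sind$ is analysed via a tensor maximum principle with a spatial cutoff $\phi_R(x)=1+\|x\|^2/R^2$ (in the spirit of Chau--Chen--He, not Omori--Yau), and tracing yields exactly the differential inequality $(\partial_t-\Delta)\tr\sind \ge \tfrac{\sigma}{2}(m-\tr\sind)(m-1-\tr\sind)$ that you identified. For \ref{mainthm:iii} the paper's test object is more specific than what you describe: it is a \emph{tensor on the normal bundle}, namely $-\sN - \varepsilon_2\,\upvartheta$ where $\upvartheta(\xi,\eta)=\Hv_\xi\Hv_\eta$ and $\sN$ is the restriction of $\sMN$ to $T^\perp M$; the null-eigenvector argument is run there, exploiting that $\sind\ge\varepsilon\gind$ forces $\sN\le-\varepsilon\gNorm$. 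Your vaguer ``combination of $\|\Hv\|^2$ with a function of $\tr\sind$'' misses this structure but is heading in the right direction.

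For \ref{mainthm:iv} the paper takes a genuinely different route from your Bernstein--Shi induction. It argues by contradiction and parabolic blow-up: if the estimate fails, pick a maximising sequence $(x_k,t_k)$, rescale parabolically, and extract a Cheeger--Gromov limit. Because $N$ has bounded geometry the rescaled ambient spaces converge to flat $\Rb^{m+n}$; because $\|\Hv\|$ is bounded (part \ref{mainthm:iii}) the rescaled mean curvature tends to zero; and the length-decreasing property is scale-invariant. The limit is therefore a strictly length-decreasing \emph{minimal} entire graph in $\Rb^{m+n}$, which Wang's Bernstein theorem forces to be affine --- contradicting the normalisation of the blow-up sequence. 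The crucial step your outline does not address is the base case $k=2$: a direct Shi-type inequality for $t\|\nabla\dd f_t\|^2$ confronts a reaction term of order $\|\A\|^4$, and parts \ref{mainthm:i}--\ref{mainthm:iii} only supply a bound on $\|\Hv\|$, not on the full second fundamental form. The paper's blow-up--plus--Bernstein manoeuvre is precisely what bridges this gap; without it, or some substitute of comparable strength, your inductive scheme cannot start.
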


Let us shortly comment on the strategy of the proofs. As in \cite{Lub16,CCH12},
the idea is to identify suitable functions and symmetric tensors
to which a maximum principle can be applied. Since
Hamilton's tensorial maximum principle \cite{Ham86} is not applicable in the non-compact case, we
follow an idea from \cite{CCH12} in order to extend it to the setting at hand. \\

Before introducing the precise geometric setting for the proof, let us make
a few remarks on the assumptions and consequences of the theorem.

\begin{remark}[Long-time Behavior]
	Theorem \ref{thm:ThmA} implies that $f_t$ becomes stationary for $t\to\infty$.
	In particular, in this limit the map $f_t:(\Rb^m,\RgM)\to(N,\gN)$ becomes totally geodesic.
	
	However, depending on the initial conditions, the map $f_t$ may
	exhibit different long-time behavior.
	To see this, let us consider maps $f_t:\Rb\to\Hb^2$
	with initial datum $f_0$. Here, we use the disk model of $\Hb^2$ for
	illustration.
	\begin{enumerate}[label=(\roman*)]
		\item If $f_0$ is a geodesic (up to scaling), it is $f_t(x)=f_0(x)$ for all $t\ge 0$.
			In particular, the height of $f_t$ as a graph over $f_0$ is zero
			(see example \ref{ex:HS3}).
		\item If $f_0(\Rb)$ is a circle centered at the origin in $\Hb^2$, i.\,e.\
			$f_0(x)=r_0 \bigl(\sin(x),\cos(x)\bigr)$ for $0\le r_0<1$, 
			the image $f_0(\Rb)$ shrinks homothetically as a submanifold of $\Hb^2$. Thus,
			the height of $f_t$ as a graph over $f_0$ remains finite
			(see example \ref{ex:HS2}).
		\item If $f_0(\Rb)$ is a circle with one point at spatial infinity in $\Hb^2$,
			the image $f_t(\Rb)$ moves out to spatial infinity.
			We observe that the height of $f_t$ as a graph over $f_0$ is unbounded
			(see example \ref{ex:HS1}).
	\end{enumerate}
\end{remark}

\begin{remark}
	If $f_0$ maps into a compact region $K\subset N$,
	then the estimate \ref{mainthm:iii} on the mean curvature vector implies that $f_t$ also maps into
	compact regions $K_t$ for all finite times.
\end{remark}

\begin{remark}
	If $\sigma = 0$, \ref{mainthm:i}, \ref{mainthm:iii} and \ref{mainthm:iv}
	of theorem \ref{thm:ThmA} still hold. Thus, in this weaker formulation,
	the theorem can be applied e.\,g.\ to maps $f:\Rb^m \to N_1\times N_2$, where
	$N_1$ and $N_2$ have non-positive sectional curvatures. 
				
	For example, let $N$ be an arbitrary Riemannian manifold with non-positive
	sectional curvature, let $p_0\in N$ be fixed, $m\ge k$, and let
	$A:\Rb^m\to\Rb^k$ be a linear map satisfying 
	$\|Au\|^2_{\Rb^k} \le (1-\delta) \|u\|^2_{\Rb^m}$ for all $u\in\Rb^m$ and some $\delta\in(0,1]$.
	In particular, $A$ may be chosen such that
	the map $f:\Rb^m\to \Rb^k \times N$ given by $f(x) \coloneqq ( A x, p_0 )$
	satisfies $\tr(\sind) = c$ for any constant $c \in (m-k,m]$. Note that 
	the mean curvature flow with initial datum
	$F(x)\coloneqq \bigl(x, f(x) \bigr)$ is stationary.
\end{remark}

\subsection*{Acknowledgments} 
The author would like to thank Klaus Kr\"oncke,
Oliver Lindblad Petersen and David Lindemann for useful remarks and stimulating discussions.

\section{Maps between Euclidean Spaces}
\label{sec:Maps}

\subsection{Geometry of Graphs}
\label{sec:GeomOfGraphs}
We recall the geometric quantities in a graphical
setting, where we mostly follow the presentation in \cite{SHS15}*{Section 2}. \\

Let $(M,\gM)$ and $(N,\gN)$ be Riemannian manifolds
of dimensions $m$ and $n$, respectively.
On the product manifold $(M\times N,\gMN\coloneqq\gM\times\gN)$,
the two natural projections
\[
	\piM : M\times N \to M\,,\qquad \piN: M\times N\to N\,,
\]
are submersions, that is they are smooth and have maximal rank. 
A smooth map $f:M\to N$ defines an embedding $F:M\to M\times N$ via
\[
	F(x) \coloneqq \bigl(x,f(x)\bigr)\,,\qquad x\in M\,.
\]
The \emph{graph of $f$} is defined to be the submanifold 
\[
	\Gamma(f) \coloneqq F(M) = \left\{ \bigl(x,f(x)\bigr) : x\in M \right\} \subset M\times N \,.
\]
Since $F$ is an embedding, it induces another Riemannian metric on $M$, given by
\[
	\gind \coloneqq F^{*}\gMN \,.
\]
The four metrics $\gM,\gN,\gMN$ and $\gind$ are related by
\begin{align*}
	\gMN &= \piM^{*} \gM + \piN^{*}\gN \,, \\
	\gind &= F^{*}\gMN = \gM + f^{*}\gN \,.
\end{align*}
Further, $F$ defines an orthogonal splitting of the bundle
\[
	F^*T(M\times N) = \dd F(TM) \oplus T^{\perp}M \,,
\]
which induces a splitting of a vector field $v\in \Gamma\bigl(F^*T(M\times N)\bigr)$ as
\[
	v = v^{\top} \oplus v^{\perp} \,.
\]
We call $v^{\top}$ the \emph{tangential part of $v$} and $v^{\perp}$ the \emph{normal part of $v$}.
The projection onto the normal part is denoted by $\prN:F^*T(M\times N)\to T^{\perp}M$.
Using a local $\gind$-orthonormal frame $\{e_1,\dotsc,e_m\}$ of $TM$, $\prN$ can be expressed as
\[
	\prN(\xi) = \xi - \sum_{k=1}^m \gMN\bigl( \xi,\dd F(e_k) \bigr) \dd F(e_k) \,.
\]
As in \cites{SHS13,SHS14}, let us introduce the symmetric $2$-tensors
\begin{align*}
	\sMN &\coloneqq \piM^{*}\gM - \piN^{*}\gN\,, \\
	\sind &\coloneqq F^{*}\sMN = \gM - f^{*}\gN \,.
\end{align*}
Note that $\sMN$ is a semi-Riemannian metric of signature $(m,n)$ on the
manifold $M\times N$.
Like in \cite{SHS15}, let us also introduce
\[
	\sN(\xi,\eta) \coloneqq \sMN\bigl(\prN(\xi),\prN(\eta)\bigr) \,, \qquad \xi,\eta \in \Gamma \bigl( F^*T(M\times N) \bigr) \,.
\]
We denote the restriction of $\gMN$ to the normal bundle
by $\gNorm$.  \\

The Levi-Civita connection on $M$ with respect to the induced
metric $\gind$ is denoted by $\nabla$ and the corresponding curvature
tensor by $\Rind$. By restricting Levi-Civita connection $\nabla^{\gMN}$ of $M\times N$ 
to the normal bundle, 
we obtain the \emph{normal connection}, given by
\[
	\nN_v \xi \coloneqq \prN\bigl( \nabla^{\gMN}_{\dd F(v)} \xi \bigr) \,, \qquad v\in \Gamma(TM) \,, \quad \xi\in \Gamma\bigl(F^*T(M\times N)\bigr) \,.
\]

\subsection{Second Fundamental Form}

The \emph{second fundamental tensor} of the graph $\Gamma(f)$ is the section 
$\A\in\Gamma\bigl(T^{\perp}M \otimes \Sym(T^*M\otimes T^*M)\bigr)$
defined as
\[
	\A(v,w) \coloneqq \bigl( \nabla \dd F\bigr)(v,w) \coloneqq \nabla^{\gMN}_{\dd F(v)} \dd F(w) - \dd F(\nabla_v w) \,,
\]
where $v,w\in \Gamma(TM)$ and where we denote the connection on 
$F^{*}T(M\times N)\otimes T^{*}M$ induced by the Levi-Civita connection also by $\nabla$.
The trace of $\A$ with respect to the metric $\gind$ is called the
\emph{mean curvature vector field} of $\Gamma(f)$ and it will be denoted by
\[
	\Hv \coloneqq \tr \A \,.
\]
Let us denote the evaluation of the second fundamental
form (resp.\ mean curvature vector) in the direction of a vector $\xi\in\Gamma\bigl(F^*T(M\times N)\bigr)$ by
\[
	\A_{\xi}(v,w) \coloneqq \gMN\bigl( \A(v,w),\xi \bigr) \qquad \text{resp.} \qquad \Hv_{\xi} \coloneqq \gMN\bigl(\Hv, \xi\bigr) \,.
\]
Note that $\Hv$ is a section in the normal bundle of the graph.
If $\Hv$ vanishes identically, the graph is said to be minimal.
A smooth map $f:M\to N$ is called \emph{minimal},
if its graph $\Gamma(f)$ is a minimal submanifold of the product
space $(M\times N,\gMN)$. \\

On the submanifold, the \emph{Gau{\ss} equation}
\begin{align}
	\nonumber \bigl( \Rind - F^*\Rm_{M\times N} \bigr)(u_1,v_1,u_2,v_2) &=  \gMN\bigl( \A(u_1,u_2), \A(v_1,v_2) \bigr) \\
		\label{eq:Gauss} & \quad - \gMN\bigl( \A(u_1,v_2), \A(v_1,u_2) \bigr)
\end{align}
and the \emph{Codazzi equation}
\[
	(\nabla_{u}\A)(v,w) - (\nabla_{v}\A)(u,w) = \Rm_{M\times N}\bigl( \dd F(u), \dd F(v) \bigr) \dd F(w) - \dd F\bigl(\Rind(u,v)w\bigr)
\]
hold, where the induced connection on the bundle $F^*T(M\times N)\otimes T^*M \otimes T^*M$
is defined as
\[
	(\nabla_{u}\A)(v,w) \coloneqq \D_{\dd F(u)}(\A(v,w)) - \A(\nabla_uv,w) - \A(v,\nabla_uw) \,.
\]

\subsection{Singular Value Decomposition}
\label{sec:SVD}

We recall the singular value decomposition theorem  and closely follow \cite{SHS13}*{Section 3.2}. \\

Fix a point $x\in M$, and let
\[
	\sv_1^2(x) \le \sv_2^2 \le \dotsb \le \sv_m^2(x)
\]
be the eigenvalues of $f^{*}\gN$ with respect to $\gM$. The
corresponding values $\sv_i\ge 0$, $i\in\{1,\dotsc,m\}$, are called the
\emph{singular values} of the differential $\dd f$ of $f$ and give rise
to continuous functions on $M$. Let
\[
	r = r(x) \coloneqq \rank \dd f(x) \,.
\]
Obviously, $r\le \min\{m,n\}$ and $\sv_1(x)=\dotsb=\sv_{m-r}(x)=0$. At the
point $x$ consider an orthonormal basis $\{\alpha_{1},\dotsc,\alpha_{m-r};\alpha_{m-r+1},\dotsc,\alpha_{m}\}$
with respect to $\gM$ which diagonalizes $f^{*}\gN$. Moreover, at $f(x)$ 
consider a basis $\{\beta_{1},\dotsc,\beta_{n-r};\beta_{n-r+1},\dotsc,\beta_{n}\}$ that is
orthonormal
with respect to $\gN$, such that
\[
	\dd f(\alpha_{i}) = \sv_i(x) \beta_{n-m+i}
\]
for any $i\in\{m-r+1,\dotsc,m\}$. This procedure is called the
\emph{singular value decomposition} of the differential $\dd f$. \\

Now let us construct a special basis for the tangent and the normal
space of the graph in terms of the singular values. The vectors
\[
	\widetilde{e}_{i} \coloneqq 
	\begin{cases} 
		\alpha_{i} \oplus 0\,, & 1\le i\le m-r\,, \\ 
		\frac{1}{\sqrt{1+\sv_{i}^{2}(x)}}\bigl( \alpha_{i} \oplus \sv_{i}(x)\beta_{n-m+i} \bigr)\,, &m-r+1 \le i \le m\,, 
	\end{cases}
\]
form an orthonormal basis with respect to the metric $\gMN$ of 
the tangent space $\dd F(T_{x}M)$ of the graph $\Gamma(f)$ at $x$. It follows
that with respect to the induced metric $\gind$, the vectors
\[
	e_i \coloneqq \frac{1}{\sqrt{1+\sv_i^2(x)}} \alpha_i
\]
form an orthonormal basis of $T_xM$.
Moreover,
the vectors
\[
	\xi_{i} \coloneqq
	\begin{cases}
		0 \oplus \beta_{i}\,, & 1 \le i \le n-r\,, \\
		\frac{1}{\sqrt{1+\sv_{i+m-n}^{2}(x)}}\bigl(-\sv_{i+m-n}(x)\alpha_{i+m-n} \oplus \beta_{i}\bigr)\,, & n-r+1\le i\le n\,,
	\end{cases}
\]
form an orthonormal basis with respect to $\gMN$ of the
normal space $T^{\perp}_{x}M$ of the graph $\Gamma(f)$ at the point
$x$. From the formulae above, we deduce that
\[
	\sMN\bigl(\widetilde{e}_{i},\widetilde{e}_{j}\bigr) = \sind(e_i,e_j) = \frac{1-\sv_{i}^{2}(x)}{1+\sv_{i}^{2}(x)}\delta_{ij} \,, \qquad 1\le i,j\le m \,.
\]
Therefore, the eigenvalues of the $2$-tensor $\sind$ with respect to $\gind$
are given by
\begin{equation}
	\label{eq:sSV}
	\frac{1-\sv_{1}^{2}(x)}{1+\sv_{1}^{2}(x)} \ge \cdots \ge \frac{1-\sv_{m-1}^{2}(x)}{1+\sv_{m-1}^{2}(x)} \ge \frac{1-\sv_{m}^{2}(x)}{1+\sv_{m}^{2}(x)} \,.
\end{equation}
Moreover,
\begin{equation}
	\label{eq:sOnNormalBdl}
	\sMN(\xi_{i},\xi_{j}) =
		\begin{cases}
			- \delta_{ij}\,, & 1 \le i \le n-r \,, \\
			- \frac{1-\sv_{i+m-n}^{2}(x)}{1+\sv_{i+m-n}^{2}(x)}\delta_{ij} \,, & n-r+1\le i \le n\,.
		\end{cases}
\end{equation}
Thus, if there exists a positive constant $\varepsilon$ such that $\sind\ge\varepsilon\gind$, then
$\sN\le - \varepsilon\gNorm$. Furthermore,
\[
	\sMN(\widetilde{e}_{m-r+i},\xi_{n-r+j}) = - \frac{2\sv_{m-r+i}(x)}{1+\sv_{m-r+i}^{2}(x)} \delta_{ij} \,,\qquad 1\le i,j\le r \,.
\]

\section{Mean Curvature Flow}
\label{sec:MCF}

Let us consider the case where $M=\Rb^m$ and 
$N$ is a complete, non-compact Riemannian manifold 
with bounded geometry satisfying $\sec_N\le -\sigma < 0$ for some $\sigma>0$.
Further, let $f:\Rb^m\to N$ be a smooth map and $T>0$. We say that a family
of maps $F:\Rb^m\times[0,T)\to\Rb^m\times N$ evolves under the mean curvature flow,
if for all $x\in\Rb^m$
\begin{equation}
	\label{eq:MCF}
	\begin{cases} \partial_t F(x,t) = \Hv(x,t) \,, \\ F(x,0) = \bigl(x,f(x)\bigr) \,. \end{cases}
\end{equation}

\subsection{Short-time Existence}

Using that $N$ has bounded geometry, 
there is a neighborhood $U\subset \Rb^m\times N$ of
$\Gamma(f_0)$ and $\Omega\subset\Rb^n$, such that $U$
is simply-connected and such that there is a 
diffeomorphism $\psi:U\to\Rb^m\times\Omega$.
Let us denote the local coordinates induced by $\psi$ on $N$
(depending on the point $x\in\Rb^m$)
by $\{y^1,\dotsc,y^n\}$.
With this identification, if $F:\Rb^m\to \Rb^m\times N$ is a graph over $\Rb^m$
and $F(\Rb^m)\subset \Rb^m\times U$,
we may equivalently consider $\psi \circ F : \Rb^m\to\Rb^m\times\Omega$,
which then has the form $\psi\circ F(x) = (x,f(x))$ for some map $f:\Rb^m\to \Omega$.

Using standard coordinates $\{x^1,\dotsc,x^m\}$ 
on $\Rb^m$ and denoting the local coordinates on $\Rb^m\times U$
collectively by $\{z^1,\dotsc,z^{m+n}\}=\{x^1,\dotsc,x^m,y^1,\dotsc,y^n\}$, 
the evolution equation for the mean curvature flow in this chart is given by
\begin{equation}
	\label{eq:localMCF}
	\partial_t F_t = \sum_{i,j=1}^m \gind^{ij} \left( \partial_{ij}^2 F_t - \sum_{l=1}^{m} \Gamma^l_{ij} \partial_l F_t + \sum_{a,b,c=1}^{m+n} \bigl( \Gamma^{\RgMN} \bigr)_{ab}^c (\partial_i F_t^a) (\partial_j F_t^b) \frac{\partial}{\partial z^c} \right) \,.
\end{equation}
Assuming a graphical solution exists up to time $T>0$,
we may choose a time-dependent diffeomorphism $\varphi : \Rb^m\times[0,T) \to\Rb^m$, such that
$F_t\circ \varphi_t(x)=(x,f_t(x))$ for each $t\in[0,T)$.
Also using $\nabla^{\RgMN} = \D \oplus \nabla^{\RgN}$ (where $\D$ denotes the
flat connection on $\Rb^m$),
the system \eqref{eq:localMCF} with initial condition $F_0(x)=(x,f(x))$ reduces to
\begin{equation}
	\label{eq:pMCF}
	\begin{cases}
		\partial_t f_t = \sum_{i,j=1}^m \tgind^{ij} \left( \partial^2_{ij} f_t + \sum_{a,b=1}^n (\Gamma^{\gN})_{ab} (\partial_i f_t^a) (\partial_j f_t^b) \right) \,, \\
		f_0(x) = f(x) \,,
	\end{cases}
\end{equation}
where here $\tgind^{ij}$ are the components of the inverse of $\tgind \coloneqq \RgM + f_t^*\gN$
and $\Gamma^{\gN}$ are the Christoffel symbols of the metric $\gN$.
Since $\Gamma^{\gN}$ only depends on the geometry of $(N,\gN)$, the second term 
only contributes to lower orders.

If
\eqref{eq:pMCF} has a smooth solution $f:\Rb^m\times[0,T)\to N$, then the mean curvature flow \eqref{eq:MCF}
has a smooth solution $F:\Rb^m\times[0,T)\to\Rb^m\times N$ given by the family of graphs
\[
	\Gamma\bigl(f(\cdot,t)\bigr) = \bigl\{ \bigl(x,f(x,t)\bigr) : x\in\Rb^m \bigr\} \,,
\]
up to tangential diffeomorphisms (see e.\,g.\ \cite{Bra78}*{Chapter 3.1}).

For \eqref{eq:pMCF}, we have the following short-time existence result.

\begin{theorem}
	\label{thm:ShortTimeEx}
	Let $(N,\gN)$ be a complete Riemannian manifold with bounded geometry.
	Further, let $f_0:\Rb^m\to N$ be a smooth function, such that for each $k\ge 0$ we have 
	$\sup_{x\in\Rb^m}\|\nabla^k \dd f_0(x)\|\le C_k$ for some finite constants $C_k$.
	Then \eqref{eq:pMCF} has a short-time smooth solution $f$ on $\Rb^m\times[0,T)$
	for some $T>0$ with initial condition $f_0$,
	such that $\sup_{x\in\Rb^m}\|\nabla^k \dd f_t(x)\| < \infty$ for every $k\ge 0$ and $t\in[0,T)$.
\end{theorem}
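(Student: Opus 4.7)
The plan is to view \eqref{eq:pMCF} as a quasilinear parabolic system on $\Rb^m$ taking values in the manifold $N$, and to solve it by a linearization-plus-Banach-fixed-point argument in a parabolic Hölder space of maps with \emph{uniform} bounds over $\Rb^m$. Bounded geometry of $N$ will give coefficients whose derivatives are uniformly controlled, while bounded geometry of $f_0$ provides the starting point and prevents the solution from escaping a uniform normal coordinate patch.

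\textbf{Setup in normal coordinates.} Using $\mathrm{inj}(N)>0$ together with the bounds on $\nabla^k \RmN$, I would fix a radius $r>0$ such that, around every $p\in N$, normal coordinates are defined on the metric ball $B_r(p)$ and the Christoffel symbols $(\Gamma^{\gN})^c_{ab}$ together with all their derivatives are bounded, uniformly in the base-point $p$. Because the mean curvature vector $\Hv$ of $\Gamma(f_0)$ is uniformly bounded on $\Rb^m$ (by the assumed $C^k_{\mathrm{unif}}$ bounds on $\dd f_0$), any map $f$ with $\|f-f_0\|_{C^0_{\mathrm{unif}}(\Rb^m)}<r/2$ lies in the chart around $f_0(x)$ for every $x$, and such a bound can be enforced for a short time. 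In this chart, \eqref{eq:pMCF} becomes a Euclidean-valued quasilinear system with smooth coefficients depending on $f$ and $\D f$, whose $C^{k}_{\mathrm{unif}}$ norms are controlled purely by the $C_k$ for $f_0$ and the bounded geometry of $N$.

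\textbf{Linear theory and fixed point.} The linearization of the system at $f_0$ has leading operator $\partial_t-\tgind^{ij}(\D f_0)\,\partial^2_{ij}$, which is uniformly parabolic on $\Rb^m$ since the eigenvalues of $\tgind$ lie between $1$ and $1+C_0^2$, and whose coefficients lie in $C^{\infty}_{\mathrm{unif}}(\Rb^m)$. Classical parabolic theory in uniform Hölder spaces (Ladyzhenskaja–Solonnikov–Ural'ceva, or Krylov) yields unique solvability of such Cauchy problems in $C^{2+\alpha,1+\alpha/2}_{\mathrm{unif}}(\Rb^m\times[0,T])$ together with Schauder estimates. Define $\Phi$ on a small closed ball in this space about $f_0$ by $\Phi(v)=u$, where $u$ solves the linearized equation with coefficients frozen at $v$, that is
\begin{equation*}
\partial_t u^a - \tgind^{ij}(\D v)\,\partial^2_{ij}u^a = \tgind^{ij}(\D v)\,(\Gamma^{\gN})^a_{bc}(v)\,\partial_i v^b\,\partial_j v^c,\qquad u(\cdot,0)=f_0.
\end{equation*}
Uniform Lipschitz dependence of $v\mapsto(\tgind^{ij}(\D v),(\Gamma^{\gN})(v))$ in parabolic Hölder norms together with the linear Schauder estimates yields that $\Phi$ is a contraction for $T$ small enough; the fixed point is the desired solution $f_t$.

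\textbf{Higher regularity and main obstacle.} The bounds $\sup_{\Rb^m}\|\nabla^k\dd f_t\|<\infty$ then follow by commuting $\nabla$ through \eqref{eq:pMCF} and iterating Schauder estimates, with the uniform derivative bounds on $(\Gamma^{\gN})^c_{ab}$ propagating through each step. The main obstacle is not short-time existence per se but the requirement that \emph{every} estimate be \emph{uniform} over the non-compact domain $\Rb^m$: the frozen-coefficient Hölder norms, the constants in the linear parabolic theory, the contraction constant, and the constants in the higher-order bootstrap must all be independent of the spatial point. This is precisely what the bounded geometry hypotheses on $N$ and $f_0$ buy, but the estimates must be set up in uniform Hölder spaces throughout, rather than relying on interior or compact-support parabolic theory.
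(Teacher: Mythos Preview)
Your proposal is correct and follows essentially the same route as the paper: reduce \eqref{eq:pMCF} to a Euclidean-valued quasilinear parabolic system via normal coordinates along $f_0$, use the bounded-geometry hypotheses on $N$ (as in \cite{Kaul76,Eichhorn91}) to get uniform control of the Christoffel symbols and hence of the lower-order terms, and then invoke standard parabolic theory on $\Rb^m$ with uniform constants. The paper's proof simply defers the analytic step to \cite[Proposition~5.1]{CCH12}, whereas you spell out the underlying Banach fixed-point argument in uniform parabolic H\"older spaces; the content is the same.
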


\begin{proof}
	By the above construction, we obtain
	$U\subset \Rb^m\times N$, $\Omega\subset\Rb^n$ and a diffeomorphism 
	$\psi:U \to \Rb^m\times \Omega$. We may choose $\psi$, such that
	the coordinates induced on $N$ are normal coordinates.	
	By the bounded geometry assumption on $N$, we may further assume that $U$ is chosen
	such that the Christoffel symbols $\Gamma^{\gN}$ are uniformly bounded in $\{p\}\times\Omega$
	for all $p\in\Rb^m$ \cites{Kaul76,Eichhorn91}.
	Thus, it is sufficient to obtain a short-time solution to
	\eqref{eq:pMCF} in $\Rb^m\times \Omega$.
	Since equation \eqref{eq:pMCF} is strongly parabolic and only differs
	by
	bounded lower-order terms from the mean curvature flow system in flat space,
	the claim follows in the same way as \cite[Proposition 5.1]{CCH12}.
	In particular, for short times the solution stays inside $\Omega$, so that
	it maps to a solution to the mean curvature flow in $U$.
\end{proof}

In the sequel, we will consider a special kind of solution to \eqref{eq:MCF}.
\begin{definition}
	Let $F_t(x)$ be a smooth solution to the system \eqref{eq:MCF} on $\Rb^m\times[0,T)$ for some
	$0<T\le\infty$, such that for each $t\in[0,T)$ and non-negative integer $k$, 
	the submanifold $F_t(\Rb^m)\subset\Rb^m\times N$ satisfies
	\begin{gather}
		\label{eq:BddGeom1}
		\sup_{x\in\Rb^m} \|\nabla^k \A(x,t) \| < \infty \,, \\
		\label{eq:BddGeom2}
		C_1(t) \RgM \le \gind \le C_2(t) \RgM \,,
	\end{gather}
	where $C_1(t)$ and $C_2(t)$ for each $t\in[0,T)$ are finite, positive constants
	depending only on $t$. Then we will say that the family of embeddings 
	$\{F_t\}_{t\in I}$
	has \emph{bounded geometry}.
\end{definition}

\begin{definition}
	Let $f_t(x)$ be a smooth solution to the system \eqref{eq:pMCF} on $\Rb^m\times[0,T)$ for
	some $0<T\le\infty$, such that for each $t\in[0,T)$ and positive integer $k$ the estimate
	\[
		\sup_{x\in\Rb^m} \| \nabla^{k-1} \dd f_t(x) \| < \infty
	\]
	holds. Then we will say that $f_t(x)$ has \emph{bounded geometry} for every $t\in[0,T)$.
\end{definition}

\subsection{Graphs}
We recall some important notions in the graphic case, where we follow the presentation in \cite[Section 3.1]{SHS14}.

Let $f_0:\Rb^m\to N$ denote a smooth map, such that $F_0(x)\coloneqq (x,f_0(x))$
has bounded geometry. Then theorem \ref{thm:ShortTimeEx} ensures that the system
\eqref{eq:pMCF} has a short-time solution with initial data $f_0(x)$
on a time interval $[0,T)$ for some positive maximal time $T>0$. Further, there
exists a diffeomorphism $\varphi_t:\Rb^m\to\Rb^m$, such that 
\begin{equation}
	\label{eq:DiffeoRel}
	F_t\circ \varphi_t(x)=(x,f_t(x)) \,,
\end{equation}
where $F_t(x)$ is a solution of \eqref{eq:MCF}.

To obtain the converse of this statement,
let $\Omega_{\Rb^m}$ be the volume form on $\Rb^m$ and extend it to a parallel $m$-form on $\Rb^m\times N$
by pulling it back via the natural projection $\RpiM:\Rb^m\times N\to\Rb^m$, that is, consider
the $m$-form $\RpiM^*\Omega_{\Rb^m}$. Define the time-dependent smooth function $u:\Rb^m\times[0,T)\to\Rb$
by
\[
	u \coloneqq \star \Omega_t \,,
\]
where $\star$ is the Hodge star operator with respect to the induced metric $\gind$ and
\[
	\Omega_t \coloneqq F_t^*\bigl( \RpiM^*\Omega_{\Rb^m} \bigr) = (\RpiM \circ F_t)^* \Omega_{\Rb^m}\,.
\]
The function $u$ is the Jacobian of the projection map from $F_t(\Rb^m)$ to $\Rb^m$. From
the implicit mapping theorem it follows that $u>0$ if and only if there exists a diffeomorphism
$\varphi_t:\Rb^m\to\Rb^m$ and a map $f_t:\Rb^m\to N$, such that \eqref{eq:DiffeoRel} holds,
i.\,e.\ $u$ is positive precisely if the solution of the mean curvature flow remains a graph.
By theorem \ref{thm:ShortTimeEx},
the solution will stay a graph at least in a short time
interval $[0,T)$.

\subsection{Evolution Equations}

Let us consider the evolution of the tensors defined in section \ref{sec:GeomOfGraphs} under the mean curvature flow.
The evolution of the tensor $\sind$ is essentially calculated in \cite[Lemma 3.1]{SHS14}
and is given by the following statement.

\begin{lemma}
	\label{lem:sEvolEq}
	The evolution of the tensor $\sind$ for $t\in[0,T)$
	is given by the formula
	\begin{align*}
		\left( \nabla_{\dt} \sind - \Lap \sind\right)(v,w) = & - \sind (\Ric v, w ) - \sind( w, \Ric v ) \\
			& - 2 \sum_{k=1}^m \RsMN\bigl( \A(e_k,v), \A(e_k,w) \bigr) \\
			& - 2 \sum_{k=1}^m f_t^*\RmN(e_k,v,e_k,w) \,,
	\end{align*}
	where $\{e_1,\dotsc,e_m\}$ is any orthonormal frame with respect to $\gind$
	and the \emph{Ricci operator} is given by
	\[
		\Ric v \coloneqq - \sum_{k=1}^m \Rind(e_k,v) e_k \,.
	\]
\end{lemma}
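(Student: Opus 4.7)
The driving observation is that the symmetric $(0,2)$-tensor $\sMN$ is \emph{parallel} with respect to the ambient Levi-Civita connection $\nabla^{\RgMN}$: both $\RgM$ and $\gN$ are parallel with respect to their own Levi-Civita connections, and hence so is $\sMN = \piM^*\RgM - \piN^*\gN$ under the product connection. This renders the computation of the evolution of $\sind = F^*\RsMN$ formally analogous to the well-known derivation of the evolution of the induced metric $\gind$ under MCF, but with signs inherited from the signature $(m,n)$ of $\RsMN$.

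First, I would compute $\nabla_{\dt} \sind$. Differentiating $\sind(v,w) = \RsMN(\dd F(v), \dd F(w))$ along the flow and using $\dt F = \Hv$ together with parallelism of $\RsMN$, the result consists of two symmetric terms of the form $\RsMN(\nabla^{\RgMN}_v \Hv, \dd F(w))$. The Weingarten formula splits $\nabla^{\RgMN}_v \Hv$ into a tangential piece proportional to $\A_{\Hv}$ and a normal piece $\nN_v \Hv$.

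Next, I would compute $\Lap \sind$. Applying parallelism of $\RsMN$ once gives
\[
(\nabla_X \sind)(v, w) = \RsMN(\A(X, v), \dd F(w)) + \RsMN(\dd F(v), \A(X, w)),
\]
by definition of $\A = \nabla \dd F$. Differentiating once more, tracing over a $\gind$-orthonormal frame $\{e_k\}$, and applying the Codazzi equation to rewrite $\sum_k (\nabla_{e_k} \A)(e_k, \cdot)$ as $\nabla \Hv$ plus curvature terms, one obtains $\Lap \sind(v, w)$ as a sum of four pieces: a $\nabla \Hv$-piece, a quadratic $\A$-piece $2\sum_k \RsMN(\A(e_k, v), \A(e_k, w))$, an ambient-curvature piece, and an intrinsic-curvature piece involving $\Rind$.

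Upon subtracting, the $\nabla \Hv$ contributions of the two calculations cancel identically. The intrinsic-curvature piece becomes $-\sind(\Ric v, w) - \sind(v, \Ric w)$ by definition of the Ricci operator. The ambient-curvature piece simplifies drastically because $\RmMN$ reduces to $\piN^* \RmN$ (since $\Rb^m$ is flat), yielding the term $-2\sum_k f_t^*\RmN(e_k, v, e_k, w)$. The remaining quadratic $\A$-terms combine to $-2\sum_k \RsMN(\A(e_k, v), \A(e_k, w))$. The chief technical obstacle is careful bookkeeping: tracking the commutators of second covariant derivatives, splitting tangential versus normal components of $\nabla^{\RgMN}\Hv$ in the Weingarten and Codazzi manipulations, and verifying the precise cancellation of all $\nabla \Hv$ terms between the two calculations.
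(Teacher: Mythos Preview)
Your outline is correct and follows precisely the approach of \cite{SHS14}*{Lemma 3.1}, which the paper simply cites in lieu of a proof: exploit the parallelism of $\RsMN$ to express $\nabla_{\dt}\sind$ and $\Lap\sind$ in terms of $\A$, apply Codazzi to produce the $\nabla\Hv$, ambient-curvature and intrinsic-curvature pieces, and cancel the $\nabla\Hv$ contributions. Your observation that the ambient-curvature term collapses to $-2\sum_k f_t^*\RmN(e_k,v,e_k,w)$ because $M=\Rb^m$ is flat is exactly the specialization the present paper makes of the general formula in \cite{SHS14}.
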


For the tensor $\sN$, from \cite[Lemma 3.3]{SHS15} we have the following evolution equation.

\begin{lemma}
	\label{lem:sNEvolEq}
	Let $\xi$ be a unit vector normal to the evolving submanifold at a fixed point
	$(x_0,t_0)$ in space-time. Then
	\begin{align*}
		&\bigl( \nN_{\dt}\sN - \Lap^{\perp}\sN \bigr)(\xi,\xi) \\
			& \quad = 2 \sum_{i,j=1}^m \A_{\xi}(e_i,e_j) \RsMN\bigl( \A(e_i,e_j), \xi \bigr) - 2 \sum_{i,j,k=1}^m \A_{\xi}(e_i,e_j) \A_{\xi}(e_i,e_k) \sind(e_j,e_k) \\
			& \qquad - 2 \sum_{i,j=1}^m \RmMN( \dd F(e_i), \dd F(e_j), \dd F(e_i),\xi) \RsMN(\dd F(e_j),\xi)
	\end{align*}
	for any $\gind$-orthonormal basis $\{e_1,\dotsc,e_m\}$ of $T_{x_0}\Rb^m$.
\end{lemma}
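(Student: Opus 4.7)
The approach is to compute both sides of the evolution equation at the space-time point $(x_0,t_0)$ by exploiting that $\sMN$ is parallel on the Riemannian product $(\Rb^m\times N,\gMN)$: since $\gMN=\piM^{*}\gM+\piN^{*}\gN$, both summands are $\nabla^{\gMN}$-parallel, and so is $\sMN=\piM^{*}\gM-\piN^{*}\gN$. This allows $\sMN$ to be commuted freely past ambient covariant derivatives.

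Concretely, I would fix the $\gind$-orthonormal frame $\{e_1,\dots,e_m\}$ and extend it in space so that $\nabla_{e_i}e_j=0$ at $(x_0,t_0)$, and extend $\xi$ locally so that $\nN_{e_i}\xi=0$ and $\gMN(\xi,\xi)=1$ there. With these choices, $\sN(\xi,\xi)=\sMN(\xi,\xi)$ in a neighborhood of $x_0$, and the first-order intrinsic corrections drop out at the evaluation point, so that $(\LapN\sN)(\xi,\xi) = \sum_i e_i e_i(\sMN(\xi,\xi))$ at $(x_0,t_0)$.

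For the spatial part, the Weingarten decomposition
\[
\nabla^{\gMN}_{\dd F(e_i)}\xi = \nN_{e_i}\xi - \sum_{j=1}^m \A_{\xi}(e_i,e_j)\,\dd F(e_j)
\]
together with parallelism of $\sMN$ gives, after one differentiation,
\[
e_i\bigl(\sMN(\xi,\xi)\bigr) = -2\sum_{j=1}^m \A_{\xi}(e_i,e_j)\,\RsMN\bigl(\dd F(e_j),\xi\bigr)
\]
at the point. Differentiating once more, the second derivative splits into three contributions: when $e_i$ hits $\A_{\xi}$, the Codazzi equation combined with parallelism yields the ambient curvature term $\RmMN\bigl(\dd F(e_i),\dd F(e_j),\dd F(e_i),\xi\bigr)\RsMN\bigl(\dd F(e_j),\xi\bigr)$; when $e_i$ hits $\xi$ inside $\RsMN(\dd F(e_j),\xi)$, a second Weingarten step produces the term $\A_{\xi}(e_i,e_j)\RsMN(\A(e_i,e_j),\xi)$; when $e_i$ hits $\dd F(e_j)$, the Gauss formula $\nabla^{\gMN}_{\dd F(e_i)}\dd F(e_j) = \dd F(\nabla_{e_i}e_j) + \A(e_i,e_j)$ contributes $\A_{\xi}(e_i,e_j)\A_{\xi}(e_i,e_k)\RsMN(\dd F(e_k),\dd F(e_j))$, which via $\sind(e_j,e_k)=\RsMN(\dd F(e_j),\dd F(e_k))$ matches the second right-hand-side term.

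For the time derivative, the MCF equation $\dt F=\Hv$ together with parallelism of $\sMN$ yields $\partial_t\sMN(\xi,\xi)=2\sMN(\nabla^{\gMN}_{\Hv}\xi,\xi)$, whose normal component is by definition absorbed into $(\nN_{\dt}\sN)(\xi,\xi)$ on the left-hand side; assembling the pieces and combining the signs then produces the claimed identity. The main obstacle will be the bookkeeping: the orthogonal splitting of $F^{*}T(\Rb^m\times N)$ is taken with respect to $\gMN$ while the tensor $\sN$ is built from $\sMN$, so mixed tangential/normal contractions have to be tracked carefully, and one must verify that the various Codazzi curvature contributions collapse to the single $\RmMN$ term of the statement rather than leaving residual intrinsic curvature pieces.
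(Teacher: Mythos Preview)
The paper does not prove this lemma at all; it is imported verbatim from \cite[Lemma 3.3]{SHS15} with no argument supplied. Your outline is precisely the standard computation one finds in that reference: use that $\sMN$ is $\nabla^{\gMN}$-parallel on the product, then unwind the normal Laplacian via the Gau{\ss} formula, the Weingarten equation and Codazzi. So there is nothing in the paper to compare against, and your route is the expected one.

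Two points in your sketch warrant correction. First, the attribution of the two non-curvature terms is swapped: differentiating the $\xi$-slot of $\sMN\bigl(\dd F(e_j),\xi\bigr)$ via Weingarten yields $-\sum_k \A_{\xi}(e_i,e_k)\,\sind(e_j,e_k)$, whereas differentiating the $\dd F(e_j)$-slot via the Gau{\ss} formula yields $\sMN\bigl(\A(e_i,e_j),\xi\bigr)$; you have these the wrong way around, though the final tally is unaffected. Second, and more substantively, when you trace Codazzi over $i$ you obtain not only the ambient $\RmMN$ piece but also a residual term $\sum_j \langle \nN_{e_j}\Hv,\xi\rangle\,\sMN\bigl(\dd F(e_j),\xi\bigr)$. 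This is absent from the stated identity, so it must cancel, and it does: the time-derivative side contributes
\[
(\nN_{\dt}\sN)(\xi,\xi) = 2\,\sMN\bigl((\nabla^{\gMN}_{\Hv}\xi)^{\top},\xi\bigr) = -2\sum_{j}\langle \nN_{e_j}\Hv,\xi\rangle\,\sMN\bigl(\dd F(e_j),\xi\bigr),
\]
using $\nabla^{\gMN}_{\dt}\dd F(e_j)=\nabla^{\gMN}_{e_j}\Hv$ and parallelism of $\sMN$. Your closing paragraph anticipates a cancellation of roughly this kind but does not locate it; since this pairing is the only nontrivial step in the whole calculation, it should be made explicit.
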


Let us define the symmetric $2$-tensor $\upvartheta \in \Sym\bigl( F^*T^*(\Rb^m\times N) \otimes F^*T^*(\Rb^m\times N) \bigr)$
by setting
\[
	\upvartheta(\xi,\eta) \coloneqq \Hv_{\prN(\xi)} \Hv_{\prN(\eta)} \,.
\]
By \cite[Lemma 3.4]{SHS15}, this tensor satisfies the following evolution equation.

\begin{lemma}
	\label{lem:thetaEvolEq}
	The symmetric $2$-tensor $\upvartheta$ evolves under the mean curvature flow according to the formula
	\begin{align*}
		\bigl( \nN_{\dt} \upvartheta - \Lap^{\perp}\upvartheta \bigr)(\xi,\xi) &= 2 \sum_{i,j=1}^m \A_{\Hv}(e_i,e_j) \A_{\xi}(e_i,e_j) \Hv_{\xi} - 2 \sum_{i=1}^m \bigl\< \nN_{e_i}\Hv,\xi \bigr\>^2 \\
			& \quad - 2 \sum_{i=1}^m \RmMN\bigl( \Hv, \dd F(e_i), \dd F(e_i), \xi \bigr) \Hv_{\xi}
	\end{align*}
	for any vector $\xi$ in the normal bundle of the submanifold.
\end{lemma}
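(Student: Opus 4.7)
The plan is to reduce the evolution equation of $\upvartheta$ to the well-known parabolic equation for the mean curvature vector $\Hv$ itself, by exploiting the fact that on the normal bundle $\upvartheta$ has the tensor-product structure $\upvartheta = \Hv^\flat \otimes \Hv^\flat$, where $\Hv^\flat(\eta) = \gMN(\Hv, \prN \eta)$.

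\textbf{Step 1: Localization.} Fix a space-time point $(x_0,t_0)$ and a $\gind$-orthonormal frame $\{e_i\}$ at $x_0$ with $\nabla e_i|_{x_0}=0$. Given $\xi \in T^\perp_{x_0}\Rb^m$, extend it to a local section of the normal bundle which is simultaneously $\nN$-parallel in spatial directions and in time at $(x_0,t_0)$. With this choice, $\upvartheta(\xi,\xi) = \Hv_\xi^2$ becomes a scalar whose covariant derivatives at $(x_0,t_0)$ reduce to ordinary derivatives.

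\textbf{Step 2: Product rule.} Differentiating $\upvartheta(\xi,\xi) = \Hv_\xi \cdot \Hv_\xi$ using Leibniz, one obtains at $(x_0,t_0)$
\begin{align*}
    \bigl(\nN_{\dt}\upvartheta\bigr)(\xi,\xi) &= 2\Hv_\xi \, \gMN\bigl(\nN_{\dt}\Hv,\xi\bigr), \\
    \bigl(\Lap^\perp\upvartheta\bigr)(\xi,\xi) &= 2\Hv_\xi \, \gMN\bigl(\Lap^\perp\Hv,\xi\bigr) + 2\sum_{i=1}^{m}\bigl\<\nN_{e_i}\Hv,\xi\bigr\>^2,
\end{align*}
so that the cross term $-2\sum_i\<\nN_{e_i}\Hv,\xi\>^2$ already appears when subtracting.

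\textbf{Step 3: Simons-type identity for $\Hv$.} The remaining ingredient is the standard evolution equation of $\Hv$ under MCF in a Riemannian ambient space, which in the present setting reads
\[
    \nN_{\dt}\Hv - \Lap^\perp\Hv = \sum_{i,j=1}^{m} \A_{\Hv}(e_i,e_j)\,\A(e_i,e_j) \;-\; \sum_{i=1}^{m} \prN\!\bigl(\RmMN(\Hv,\dd F(e_i))\,\dd F(e_i)\bigr).
\]
Pairing with $\xi$, using the symmetries of $\RmMN$ to rewrite the curvature contraction as $\RmMN(\Hv,\dd F(e_i),\dd F(e_i),\xi)$, and multiplying by $2\Hv_\xi$, then combining with the gradient term from Step 2, reproduces the three summands on the right-hand side of the claim.

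\textbf{Main obstacle.} The substantive content is the derivation (or recollection) of the Simons-type identity for $\Hv$ in a \emph{curved} ambient space: one must commute $\Lap^\perp$ with the trace defining $\Hv$, which produces the $\A_\Hv \cdot \A$ quadratic term and an ambient Riemann curvature term through the Ricci identity in the normal bundle. Care is needed with sign conventions for $\RmMN$ (using $\RmMN(X,Y,Z,W) = -\RmMN(Y,X,Z,W)$) so that the final curvature contribution agrees with the stated formula, and with checking that the parallel extension of $\xi$ does not produce spurious connection terms at $(x_0,t_0)$.
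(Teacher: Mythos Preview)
Your approach is correct and is the standard derivation. Note that the paper itself does not give a proof of this lemma: it simply cites \cite[Lemma 3.4]{SHS15}. Your route---reduce to the scalar $\Hv_\xi^2$ via a parallel extension of $\xi$, apply the Leibniz rule to split off the gradient term $-2\sum_i\langle\nN_{e_i}\Hv,\xi\rangle^2$, and then invoke the Simons-type evolution $\nN_{\dt}\Hv-\Lap^\perp\Hv=\sum_{i,j}\A_{\Hv}(e_i,e_j)\A(e_i,e_j)-\sum_i\prN\bigl(\RmMN(\Hv,\dd F(e_i))\dd F(e_i)\bigr)$---is exactly how one proves this in the cited reference, so there is nothing to contrast. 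Your caveat about sign conventions in the curvature term is appropriate; once you fix $\RmMN(X,Y,Z,W)=\langle\RmMN(X,Y)Z,W\rangle$ consistently with the Gau{\ss} equation stated in the paper, the signs line up as claimed.
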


\section{A Priori Estimates}

\subsection{Preserved Quantities}

Following the idea in \cite{CCH12}, we will need the function
\begin{equation}
	\phi_R(x) \coloneqq 1 + \frac{\|x\|^2_{\Rb^m}}{R^2} \,,
\end{equation}
where $\|\cdot\|_{\Rb^m}$ denotes the Euclidean norm on $\Rb^m$ and $R>0$ is a constant which will
be chosen later.

\begin{lemma}[{\cite[Lemma 4.1]{Lub16}}]
	Let $F(x,t)$ be a smooth solution to \eqref{eq:MCF} with bounded geometry and
	assume there exists $\varepsilon>0$, such that $\sind-\varepsilon \gind\ge 0$
	for any $t\in[0,T)$. Fix any $T'\in [0,T)$ and $(x_0,t_0)\in\Rb^m\times[0,T']$.
	Then for any tangent vector $v$ and any normal vector $\xi$ at $(x_0,t_0)$,
	the following estimates hold,
	\begin{align}
		-c(T') \frac{\|x_0\|_{\Rb^m}}{R^2} \sind(v,v) &\le \< \nabla \phi_R, (\nabla\sind)(v,v) \> \le c(T') \frac{\|x_0\|_{\Rb^m}}{R^2} \sind(v,v) \,, \\
		c(T') \frac{\|x_0\|_{\Rb^m}}{R^2} \sN(\xi,\xi) &\le \< \nabla \phi_R, (\nN\sN)(\xi,\xi)\> \le -c(T') \frac{\|x_0\|_{\Rb^m}}{R^2} \sN(\xi,\xi) \,, \\
		| \Lap \phi_R | &\le c(T') \left( \frac{1}{R^2} + \frac{\|x_0\|_{\Rb^m}}{R^2} \right) \,,
	\end{align}
	where $c(T')\ge 0$ is a constant depending only on $T'$.
\end{lemma}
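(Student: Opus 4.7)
The strategy is to exploit the simple structure of $\phi_R$: as a function on $\Rb^m$ it has Euclidean gradient $2x/R^2$ and Euclidean Hessian $(2/R^2)\RgM$. The bounded geometry assumption on $F_t$ (together with $\sind\ge\varepsilon\gind$) yields a uniform equivalence $C_1(T')\RgM \le \gind \le C_2(T')\RgM$ on $[0,T']$, and I would first deduce from this the pointwise estimate
\[
	\|\nabla\phi_R\|_{\gind} \le c(T')\,\|x_0\|_{\Rb^m}/R^2
\]
at every $(x_0,t_0)$ in this time interval.

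For the first pointwise inequality I rewrite the pairing as a directional covariant derivative,
\[
	\bigl\langle \nabla\phi_R,(\nabla\sind)(v,v)\bigr\rangle = (\nabla_{\nabla\phi_R}\sind)(v,v),
\]
and apply Cauchy--Schwarz to bound it by $\|\nabla\phi_R\|_{\gind}\,\|\nabla\sind\|_{\gind}\,|v|^2_{\gind}$. The factor $\|\nabla\sind\|_{\gind}$ is uniformly bounded on $[0,T']$: since $\sind = \gind - 2f_t^*\gN$, its covariant derivative reduces to $-2\nabla(f_t^*\gN)$, which is in turn controlled by $\|\A\|$ and $\|\nabla\dd f_t\|$; both are bounded by the hypotheses of bounded geometry on $F_t$ and on $(N,\gN)$. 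The length-decreasing assumption $\sind \ge \varepsilon\gind$ then lets me absorb the factor $|v|^2_{\gind}$ into $\sind(v,v)$ via $|v|^2_{\gind} \le \varepsilon^{-1}\sind(v,v)$, producing the two-sided estimate. The second inequality is proved by the identical argument in the normal bundle, using $\sN \le -\varepsilon\gNorm$ from Section~\ref{sec:SVD}; the negative sign of $\sN$ is exactly what reverses the direction of the two inequalities in the statement.

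For the Laplacian estimate I would compute $\Lap\phi_R$ directly in the standard coordinates $\{x^1,\dotsc,x^m\}$:
\[
	\Lap\phi_R = \sum_{i,j}\gind^{ij}\Bigl(\partial^2_{ij}\phi_R - \sum_k \Gamma^k_{ij}\,\partial_k\phi_R\Bigr).
\]
Substituting $\partial^2_{ij}\phi_R = 2\delta_{ij}/R^2$ and $\partial_k\phi_R = 2x^k/R^2$, and using the bounded-geometry bounds on $\gind^{ij}$ and on the Christoffel symbols $\Gamma^k_{ij}$ of $\gind$, the first sum produces the constant term $c(T')/R^2$ and the second produces the linear term $c(T')\|x_0\|_{\Rb^m}/R^2$.

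The main technical obstacle is not any single step but the verification that $\|\nabla\sind\|_{\gind}$, $\|\nN\sN\|_{\gind}$, and the Christoffel symbols of $\gind$ are uniformly bounded on $[0,T']$. This relies crucially on combining the bounded geometry of $F_t$ (which controls $\A$ and its derivatives) with the bounded geometry of $(N,\gN)$ (which controls the pullback curvature entering via the Codazzi equation and via $\nabla(f_t^*\gN)$). Since the bounded-geometry constants $C_1(t),C_2(t)$ in general depend on $t$, the resulting constants can only be taken uniform on a precompact subinterval $[0,T']\subset[0,T)$, which is precisely what is claimed.
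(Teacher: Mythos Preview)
The paper does not supply its own proof of this lemma; it is quoted verbatim from \cite[Lemma 4.1]{Lub16} and left uncited beyond the reference. Your argument is correct and is the natural one: bound $\|\nabla\phi_R\|_{\gind}$ via the metric equivalence $C_1(T')\RgM\le\gind\le C_2(T')\RgM$, bound $\|\nabla\sind\|_{\gind}$ and $\|\nN\sN\|_{\gNorm}$ using the second fundamental form, and then absorb $|v|^2_{\gind}$ (resp.\ $|\xi|^2_{\gNorm}$) into $\sind(v,v)$ (resp.\ $-\sN(\xi,\xi)$) via the hypothesis $\sind\ge\varepsilon\gind$ (resp.\ $\sN\le-\varepsilon\gNorm$). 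The Laplacian bound by direct coordinate computation is likewise standard.

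One small refinement: rather than writing $\nabla\sind = -2\nabla(f_t^*\gN)$ and then invoking $\|\nabla\dd f_t\|$ separately, it is cleaner to use the identity
\[
	(\nabla_u\sind)(v,w) = \RsMN\bigl(\A(u,v),\dd F(w)\bigr) + \RsMN\bigl(\dd F(v),\A(u,w)\bigr),
\]
which expresses $\nabla\sind$ directly in terms of $\A$ and makes the bound $\|\nabla\sind\|_{\gind}\le c\|\A\|$ immediate (compare the analogous computation in the proof of Lemma~\ref{lem:trDerivEst}). The same identity, transported to the normal bundle, handles $\nN\sN$.
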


To obtain the preservation of the length-decreasing property, for any
$R,\eta>0$ let us further set
\[
	\uppsi_{|(x,t)} \coloneqq \ee^{\eta t} \phi_R(x) \sind_{|(x,t)} - \varepsilon \gind_{|(x,t)} \,.
\]

\begin{lemma}
	Under the mean curvature flow, the tensor $\uppsi$ evolves according to the equation
	\begin{align*}
		\bigl( \nabla_{\dt}\uppsi &{}- \Lap \uppsi \bigr)(u_1,u_2) \\
			&{} = - \uppsi( \Ric u_1, u_2) - \uppsi( u_1, \Ric u_2 ) \\
			&{} \phantom{{}={}} + 2 \varepsilon \sum_{k=1}^m \< \A(u_1,e_k), \A(u_2,e_k) \> \\
			&{} \phantom{{}={}} - 2 \ee^{\eta t} \phi_R \sum_{k=1}^m \RsMN\bigl( \A(u_1,e_k), \A(u_2,e_k) \bigr) \\
			&{} \phantom{{}={}} - 2 \ee^{\eta t}\phi_R \sum_{k=1}^m f^*_t\RmN(e_k,u_1,e_k,u_2) \\
			&{} \phantom{{}={}} - \ee^{\eta t}\Bigl\{ (\Lap \phi_R) \sind(u_1,u_2) + 2 \<\nabla\phi_R, (\nabla\sind)(u_1,u_2)\> - \eta \phi_R \sind(u_1,u_2) \Bigr\}
	\end{align*}
	for any $u_1,u_2\in\Gamma(T\Rb^m)$ and any local frame $\{e_1,\dotsc,e_m\}$ which is
	orthonormal with respect to $\gind$.
\end{lemma}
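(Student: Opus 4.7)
The plan is to apply the operator $\nabla_{\dt} - \Lap$ to each summand of $\uppsi = \ee^{\eta t}\phi_R\sind - \varepsilon\gind$ via the product rule, then substitute the evolution equation for $\sind$ from Lemma~\ref{lem:sEvolEq}.

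Since $\phi_R$ is time-independent, the time covariant derivative is
\[
	\nabla_{\dt}\bigl(\ee^{\eta t}\phi_R\sind\bigr) = \eta\ee^{\eta t}\phi_R\sind + \ee^{\eta t}\phi_R\,\nabla_{\dt}\sind,
\]
while the product rule for the connection Laplacian of the tensor $\ee^{\eta t}\phi_R\sind$ yields
\[
	\Lap\bigl(\ee^{\eta t}\phi_R\sind\bigr) = \ee^{\eta t}\bigl\{(\Lap\phi_R)\sind + 2\<\nabla\phi_R, \nabla\sind\> + \phi_R\Lap\sind\bigr\}.
\]
Subtracting and inserting Lemma~\ref{lem:sEvolEq} for $(\nabla_{\dt} - \Lap)\sind$ produces, up to $\gind$-terms, all the $\RsMN$-, $f_t^*\RmN$-, $\Lap\phi_R$-, $\nabla\phi_R$- and $\eta\phi_R$-contributions of the claim, together with the $\sind$-Ricci piece $-\ee^{\eta t}\phi_R\bigl[\sind(\Ric u_1, u_2) + \sind(u_1, \Ric u_2)\bigr]$.

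For the summand $-\varepsilon\gind$, I use that the induced metric is $\nabla$-parallel, so $\Lap\gind \equiv 0$, together with the classical evolution equation
\[
	(\nabla_{\dt}\gind)(u_1, u_2) = -\gind(\Ric u_1, u_2) - \gind(u_1, \Ric u_2) - 2\sum_{k=1}^m \<\A(u_1, e_k), \A(u_2, e_k)\>,
\]
which can be derived along the same lines as \cite[Lemma 3.1]{SHS14}, or directly from $\dt\gind = -2\<\A, \Hv\>$ combined with the traced Gauss equation~\eqref{eq:Gauss}. After multiplication by $-\varepsilon$, its $|\A|^2$-piece produces the summand $+2\varepsilon\sum_k\<\A(u_1, e_k), \A(u_2, e_k)\>$ of the claim, and its Ricci pieces combine with those above via the identity
\[
	-\ee^{\eta t}\phi_R\bigl[\sind(\Ric u_1, u_2) + \sind(u_1, \Ric u_2)\bigr] + \varepsilon\bigl[\gind(\Ric u_1, u_2) + \gind(u_1, \Ric u_2)\bigr] = -\uppsi(\Ric u_1, u_2) - \uppsi(u_1, \Ric u_2),
\]
which holds by the very definition of $\uppsi$. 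Regrouping all contributions yields the stated formula.

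The only non-routine point is the correct invocation of the evolution of $\gind$: its $|\A|^2$-term must enter with precisely the coefficient and sign shown so as to produce the $2\varepsilon\sum_k\<\A(u_1, e_k), \A(u_2, e_k)\>$ in the claim, and its Ricci pieces must match the signs of the $\sind$-Ricci pieces so that the bracket collapses to $-\uppsi(\Ric\cdot, \cdot) - \uppsi(\cdot, \Ric\cdot)$. Once this identity is fixed, the remainder is a pure product-rule bookkeeping.
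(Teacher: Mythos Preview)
Your approach is exactly the one the paper invokes: it defers to \cite[Lemma~4.2]{Lub16} (product rule on $\ee^{\eta t}\phi_R\sind-\varepsilon\gind$, then substitute the evolution of $\sind$) and only remarks that the extra $f_t^*\RmN$ term from Lemma~\ref{lem:sEvolEq} must be carried along, which is precisely what you do.

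One word of caution on a detail. The displayed identity
\[
(\nabla_{\dt}\gind)(u_1,u_2)=-\gind(\Ric u_1,u_2)-\gind(u_1,\Ric u_2)-2\sum_{k=1}^m\<\A(u_1,e_k),\A(u_2,e_k)\>
\]
is indeed what is required, and it is what one obtains by running the computation of \cite[Lemma~3.1]{SHS14} with $\gMN$ in place of $\sMN$. Your second suggested derivation, however --- ``directly from $\partial_t\gind=-2\<\A,\Hv\>$ combined with the traced Gau\ss\ equation~\eqref{eq:Gauss}'' --- does not obviously reproduce it: tracing \eqref{eq:Gauss} against $-2\<\A(u_1,u_2),\Hv\>$ yields the Ricci contribution with the opposite sign and an additional $f_t^*\RmN$ piece. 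Since the paper (and \cite{Lub16}) rely on the \cite{SHS14} computation rather than this shortcut, it is safer to cite the former and drop the latter justification.
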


\begin{proof}
	The proof is the same as the proof of \cite[Lemma 4.2]{Lub16}, but one 
	needs to take the additional curvature term occuring in lemma \ref{lem:sEvolEq} into consideration.
\end{proof}

\begin{lemma}
	\label{lem:LenPres}
	Let $F(x,t)$ be a smooth solution to \eqref{eq:MCF} with bounded geometry. 
	Assume there exists $\varepsilon>0$ with $\sind-\varepsilon\gind\ge 0$ at $t=0$.
	Then it is $\sind-\varepsilon\gind\ge 0$ for all $t\in[0,T)$.
\end{lemma}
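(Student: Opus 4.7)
The plan is to follow the argument of \cite[Lemma 4.3]{Lub16}, where the only new ingredient is the favourable sign of the extra curvature term from lemma \ref{lem:sEvolEq}. For each pair of parameters $R,\eta>0$, I will show that the auxiliary tensor $\uppsi_{R,\eta}=e^{\eta t}\phi_R\sind-\varepsilon\gind$ remains positive semi-definite on $\Rb^m\times[0,T)$. Sending $R\to\infty$ (so that $\phi_R(x)\to 1$ pointwise) and then $\eta\to 0^+$ then yields $\sind-\varepsilon\gind\ge 0$ throughout the flow.

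At $t=0$ the inequality $\uppsi_{R,\eta}\ge(\phi_R-1)\varepsilon\gind\ge 0$ is immediate from $\phi_R\ge 1$ and the hypothesis. Suppose for contradiction that there is some $T'\in(0,T)$ at which $\uppsi_{R,\eta}$ fails to be positive semi-definite somewhere in $\Rb^m\times[0,T']$, and let $t_0\in(0,T']$ be the first such time. By the bounded-geometry hypothesis, $\sind$ and its covariant derivatives are uniformly bounded on $[0,T']$; combined with the quadratic growth of $\phi_R$ at infinity, this forces the first touching to occur at an interior point $(x_0,t_0)$ together with a $\gind$-unit vector $v_0\in T_{x_0}\Rb^m$ with $\uppsi_{R,\eta}(v_0,v_0)=0$, while $\uppsi_{R,\eta}\ge 0$ holds throughout $\Rb^m\times[0,t_0]$; this localisation is exactly the argument carried out in detail in \cite[Lemma 4.3]{Lub16}. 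By Cauchy--Schwarz for positive semi-definite forms, $\uppsi_{R,\eta}(v_0,w)=0$ for every $w$, so in particular $\uppsi_{R,\eta}(\Ric v_0,v_0)=0$. Hamilton's tensorial minimum principle at $(x_0,t_0)$ then gives $(\nabla_{\dt}\uppsi_{R,\eta}-\Lap\uppsi_{R,\eta})(v_0,v_0)\le 0$.

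Substituting the evolution equation stated above, the Ricci term vanishes as just noted. The second-fundamental-form contribution $2\varepsilon\sum_k\|\A(v_0,e_k)\|^2-2e^{\eta t_0}\phi_R\sum_k\RsMN(\A(v_0,e_k),\A(v_0,e_k))$ is non-negative at the critical point, by exactly the singular-value-decomposition computation of section \ref{sec:SVD} and \cite{Lub16}, using the identity $\sind(v_0,v_0)=\varepsilon/(e^{\eta t_0}\phi_R(x_0))$. The new term $-2e^{\eta t_0}\phi_R\sum_k f_t^*\RmN(e_k,v_0,e_k,v_0)$ is non-negative because $\sec_N\le 0$ (the strict bound $\sec_N\le-\sigma$ is only used later for estimate \ref{mainthm:ii}). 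The remaining expressions involving $\Lap\phi_R$ and $\<\nabla\phi_R,(\nabla\sind)(v_0,v_0)\>$ are controlled by the preceding lemma from \cite{Lub16} and, thanks to $e^{\eta t_0}\phi_R\sind(v_0,v_0)=\varepsilon$, are of order $c(T')\varepsilon/R$. Finally, $\eta e^{\eta t_0}\phi_R\sind(v_0,v_0)=\eta\varepsilon$ is strictly positive and dominates the $O(1/R)$ error once $R$ is chosen large enough. This contradicts $(\nabla_{\dt}-\Lap)\uppsi_{R,\eta}(v_0,v_0)\le 0$ and establishes $\uppsi_{R,\eta}\ge 0$ on $[0,T)$.

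The main obstacle is the non-compactness of $\Rb^m$, which precludes a direct application of Hamilton's maximum principle. The cutoff $\phi_R$ plays a dual role: its growth at spatial infinity, together with the bounded-geometry hypothesis, localises the first violation of $\uppsi_{R,\eta}\ge 0$ to an interior point; and the fact that $\nabla\phi_R$, $\Lap\phi_R$ scale like $1/R$ allows the spurious derivative terms they generate to be absorbed into the strictly positive margin $\eta\phi_R\sind$. Compared with the Euclidean-target case of \cite{Lub16}, the only genuinely new point is the sign of the curvature contribution from $N$, which is an immediate consequence of $\sec_N\le 0$.
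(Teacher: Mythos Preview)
Your argument tracks the paper's closely, and the observation that the extra curvature term $-2\ee^{\eta t}\phi_R\sum_k f_t^*\RmN(e_k,v,e_k,v)$ has the right sign under $\sec_N\le 0$ is exactly the new point. However, there is a real gap in the localisation step.

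You claim that the bounded-geometry hypothesis, together with the quadratic growth of $\phi_R$, forces the first null point of $\uppsi_{R,\eta}$ to lie in a compact set. Bounded geometry only tells you that $\sind$ is uniformly bounded \emph{above}; it does not give a uniform positive \emph{lower} bound on the eigenvalues of $\sind$. All you know at the first touching time $t_0$ is $\sind\ge\frac{\varepsilon}{e^{\eta t_0}\phi_R}\gind$, and this lower bound tends to $0$ as $\|x\|\to\infty$. Hence nothing prevents the infimum of the smallest eigenvalue of $\uppsi_{R,\eta}(\cdot,t_0)$ from being approached only along a sequence $\|x_k\|\to\infty$, with no finite minimiser. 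The same issue bites again when you invoke the derivative estimate of \cite[Lemma~4.1]{Lub16}: that lemma is stated under the hypothesis $\sind-\varepsilon'\gind\ge 0$ on the whole time interval, and its constant $c(T')$ depends on $\varepsilon'$; without a uniform lower bound you cannot convert the absolute bound on $(\nabla\sind)(v_0,v_0)$ into one proportional to $\sind(v_0,v_0)$.

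The paper (following \cite{Lub16}) resolves this with a two-step bootstrap that you have omitted: first one \emph{assumes} the weaker a~priori inequality $\sind-\tfrac{\varepsilon}{2}\gind\ge 0$ on a subinterval $[0,T')$; this supplies both the localisation and the hypothesis of \cite[Lemma~4.1]{Lub16}, and the argument you wrote then goes through to yield $\sind-\varepsilon\gind\ge 0$ on $[0,T')$. Second, a continuity argument as in \cite[Lemma~4.4]{Lub16} removes the auxiliary assumption. Add these two steps and your proof is complete.
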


\begin{proof}
	We first assume that also $\sind-\frac{\varepsilon}{2}\ge 0$ on
	$[0,T') \subset [0,T)$. 
	By assumption, the curvature of $N$ 
	satisfies $\sec_N\le 0$, so that the curvature term in the evolution equation
	of $\uppsi$ with respect to any $v\in\Gamma(T\Rb^m)$ satisfies
	\[
		- 2 \ee^{\eta t}\phi_R \sum_{k=1}^m f_t^* \RmN(e_k,v,e_k,v) \ge 0 \,.
	\]
	Thus, we may argue in exactly the same way as in the proof of \cite[Lemma 4.3]{Lub16},
	which shows $\sind-\varepsilon\gind\ge 0$ is preserved on $[0,T')$.
	
	Finally, the additonal assumption on $\sind$ is removed as in \cite[Lemma 4.4]{Lub16}, and
	the claim follows.
\end{proof}

It immediately follows that a smooth length-decreasing map $f:\Rb^m\to N$ evolves
through length-decreasing maps $f_t:\Rb^m\to N$ under the mean curvature flow
\cite[Lemma 4.5]{Lub16}. \\

The next step will be to show that the mean curvature vector of the graph remains bounded under
the mean curvature flow. For this, let us set
\[
	\upchi \coloneqq -\ee^{\eta t} \phi_R \sN - \varepsilon_2 \uptheta \,.
\]

\begin{lemma}
	\label{lem:ChiEvolEq}
	For any unit vector $\xi$ normal to the evolving submanifold at a fixed
	point $(x_0,t_0)\in \Rb^m\times [0,T)$, the tensor $\upchi$ satisfies the equation
	\begin{align*}
		& \bigl( \nN_{\dt} \upchi - \LapN\upchi \bigr)(\xi,\xi) \\
			& \quad = \ee^{\eta t_0} \Bigl\{ - \eta \phi_R \sN(\xi,\xi) + (\Lap\phi_R)\sN(\xi,\xi) + 2 \< \nabla \phi_R, (\nN\sN)(\xi,\xi) \> \Bigr\} \\
				& \qquad - 2 \ee^{\eta t_0} \phi_R \left\{ \sum_{i,j=1}^m \A_{\xi}(e_i,e_j)\RsMN\bigl( \A(e_i,e_j), \xi \bigr) \right. \\
					& \qquad \qquad \qquad \qquad \left. - \sum_{i,j,k=1}^m \A_{\xi}(e_i,e_j) \A_{\xi}(e_i,e_k) \sind(e_j,e_k) \right\} \\
				& \qquad - 2 \varepsilon_2 \left\{ \sum_{i,j=1}^m \A_{\Hv}(e_i,e_j) \A_{\xi}(e_i,e_j) \Hv_{\xi} - \sum_{i=1}^m \bigl\< \nN_{e_i}\Hv,\xi \bigr\>^2 \right\} \\
				& \qquad + 2 \ee^{\eta t_0} \phi_R \sum_{i,j=1}^m \RmMN\bigl( \dd F(e_i), \dd F(e_j), \dd F(e_i), \xi\bigr) \RsMN\bigl( \dd F(e_j), \xi \bigr) \\
				& \qquad + 2 \varepsilon_2 \sum_{i=1}^m \RmMN\bigl( \Hv, \dd F(e_i), \dd F(e_i), \xi) \Hv_{\xi}
	\end{align*}
	under the mean curvature flow.
\end{lemma}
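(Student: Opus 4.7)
The statement is a direct computation: $\upchi$ is, by definition, a linear combination of the two normal-bundle tensors $\sN$ and $\upvartheta$ (weighted by time-dependent scalar factors), and evolution equations for each piece are already recorded in Lemma \ref{lem:sNEvolEq} and Lemma \ref{lem:thetaEvolEq}. So the plan is to apply the heat operator $\nN_{\dt} - \LapN$ to each summand of $\upchi$ and add.

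First, I would handle the term $-\ee^{\eta t}\phi_R\sN$. Since $\phi_R$ depends only on $x$, the product/Leibniz rule gives
\[
  \nN_{\dt}\bigl(\ee^{\eta t}\phi_R\sN\bigr) \;=\; \eta\,\ee^{\eta t}\phi_R\sN + \ee^{\eta t}\phi_R\,\nN_{\dt}\sN,
\]
and the standard identity for the rough Laplacian acting on a tensor scaled by a function gives
\[
  \LapN\bigl(\ee^{\eta t}\phi_R\sN\bigr) \;=\; \ee^{\eta t}(\Lap\phi_R)\sN + 2\,\ee^{\eta t}\bigl\langle \nabla\phi_R,\nN\sN\bigr\rangle + \ee^{\eta t}\phi_R\,\LapN\sN.
\]
Subtracting and multiplying by $-1$, evaluated at $(x_0,t_0)$ on the unit normal $\xi$, produces precisely the first block of terms in the claim, namely $-\eta\phi_R\sN(\xi,\xi) + (\Lap\phi_R)\sN(\xi,\xi) + 2\langle\nabla\phi_R,(\nN\sN)(\xi,\xi)\rangle$, together with the term $-\ee^{\eta t_0}\phi_R\,(\nN_{\dt}\sN-\LapN\sN)(\xi,\xi)$. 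Substituting Lemma \ref{lem:sNEvolEq} into the latter yields the two $\A_{\xi}$-quadratic lines and the first ambient curvature line of the claim (with a sign flip coming from the outer minus sign).

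Second, I would apply Lemma \ref{lem:thetaEvolEq} to the term $-\varepsilon_2\upvartheta$: this is immediate, giving
\[
  -\varepsilon_2\bigl(\nN_{\dt}\upvartheta-\LapN\upvartheta\bigr)(\xi,\xi),
\]
which, once Lemma \ref{lem:thetaEvolEq} is inserted, produces the $\varepsilon_2$-block of terms in the claim (the $\A_{\Hv}\A_{\xi}\Hv_{\xi}$ term, the $\langle\nN_{e_i}\Hv,\xi\rangle^2$ term, and the second ambient curvature term), with signs flipped appropriately.

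Finally, I would add the two contributions and collect terms. There is no genuine obstacle here: the only points that require care are (i) keeping track of the signs, since $\upchi$ is defined with two minus signs and the formulas from Lemmas \ref{lem:sNEvolEq}/\ref{lem:thetaEvolEq} themselves contain sign-carrying curvature terms, and (ii) correctly using the Leibniz-type formula $\LapN(\phi\,T)=\Lap\phi\cdot T+2\langle\nabla\phi,\nN T\rangle+\phi\,\LapN T$, which is legitimate because $\phi_R$ is a function pulled back from the base and $T=\sN$ is a section of $T^*M\otimes T^*M$ on which the rough Laplacian is built from $\nabla$ on $M$ combined with $\nN$ on the normal bundle. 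Since the statement is evaluated at a single point $(x_0,t_0)$ and on a single unit normal $\xi$, there is no need to discuss parallel extensions beyond what is already done in the proofs of Lemmas \ref{lem:sNEvolEq} and \ref{lem:thetaEvolEq}.
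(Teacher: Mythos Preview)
Your proposal is correct and follows exactly the same approach as the paper: apply the Leibniz rule to split $(\nN_{\dt}-\LapN)$ of $-\ee^{\eta t}\phi_R\sN-\varepsilon_2\upvartheta$ into the scalar-factor terms plus $-\ee^{\eta t}\phi_R(\nN_{\dt}\sN-\LapN\sN)-\varepsilon_2(\nN_{\dt}\upvartheta-\LapN\upvartheta)$, and then substitute Lemmas~\ref{lem:sNEvolEq} and~\ref{lem:thetaEvolEq}. The paper's proof is only a few lines and records precisely this decomposition.
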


\begin{proof}
	We calculate
	\begin{align*}
		\bigl(\nN_{\dt}\upchi - \LapN\upchi\bigr)(\xi,\xi) &= -\ee^{\eta t}\phi_R \bigl(\nN_{\dt}\sN - \LapN\sN\bigr)(\xi,\xi) - \varepsilon_2 \bigl(\nN_{\dt}\uptheta-\LapN\uptheta\bigr)(\xi,\xi) \\
			& \qquad - \eta \ee^{\eta t} \phi_R \sN(\xi,\xi) + \ee^{\eta t}(\Lap\phi_R)\sN(\xi,\xi) \\
			& \qquad + 2 \ee^{\eta t} \< \nabla \phi_R, (\nN\sN)(\xi,\xi)\> \,.
	\end{align*}
	The claim follows from the evolution equations for $\sN$ and $\uptheta$ in lemmas \ref{lem:sNEvolEq} and \ref{lem:thetaEvolEq}.
\end{proof}

\begin{lemma}
	\label{lem:MCVEst}
	Let $F(x,t)$ be a smooth solution to \eqref{eq:MCF} with bounded geometry and suppose
	$\sind-\varepsilon_1\gind\ge 0$ on $[0,T)$ for some $\varepsilon_1>0$. Then there exists
	a constant $\varepsilon_2>0$ depending on $\varepsilon_1$, the dimension $m=\dim \Rb^m$
	and the geometry of $N$, such that
	\[
		\sN + \varepsilon_2 \uptheta \le 0
	\]
	on $\Rb^m\times[0,T)$.
\end{lemma}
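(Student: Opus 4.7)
The plan is to apply a non-compact tensorial maximum principle to $\upchi = -\ee^{\eta t}\phi_R \sN - \varepsilon_2 \uptheta$ on the normal bundle, using its evolution equation from Lemma~\ref{lem:ChiEvolEq}. This follows the approach used for Euclidean targets in \cite[Lemma 4.7]{Lub16}; the new difficulty is that one must absorb ambient curvature contributions stemming from $\RmN \neq 0$, and the assumption $\sec_N \le 0$ will be essential to give these the right sign.

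First, the hypothesis $\sind \ge \varepsilon_1 \gind$ translates, via the singular value decomposition of Section~\ref{sec:SVD} together with \eqref{eq:sOnNormalBdl}, into the uniform bound $\sN \le -\varepsilon_1' \gNorm$ with $\varepsilon_1' = \varepsilon_1/(2 - \varepsilon_1)$. Combined with the bounded-geometry assumption on the flow, this forces $\upchi(\xi,\xi) \to +\infty$ as $\|x\|_{\Rb^m} \to \infty$ on each finite time slice, so the infimum of $\upchi(\xi,\xi)$ over unit normals is attained at an interior point. One chooses $\varepsilon_2 > 0$ small enough (depending on $\varepsilon_1$, $m$, the geometry of $N$, and the initial sup-bound on $\|\Hv\|^2$ provided by the bounded-geometry assumption) that $\upchi(\xi,\xi) \ge 0$ at $t = 0$, and then argues by contradiction: at the first space-time point $(x_0,t_0)$ and unit normal $\xi_0$ at which $\upchi(\xi_0,\xi_0) = 0$, the maximum principle yields $\bigl(\nN_{\dt}\upchi - \LapN\upchi\bigr)(\xi_0,\xi_0) \le 0$. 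The dominant good contributions on the right-hand side of Lemma~\ref{lem:ChiEvolEq} are
\[
    -2\ee^{\eta t_0}\phi_R \sum_{i,j,k=1}^m \A_{\xi_0}(e_i,e_j)\A_{\xi_0}(e_i,e_k)\sind(e_j,e_k) \le -2\ee^{\eta t_0}\phi_R\,\varepsilon_1\,|\A_{\xi_0}|^2,
\]
the non-negative term $2\varepsilon_2 \sum_{i=1}^m \bigl\<\nN_{e_i}\Hv,\xi_0\bigr\>^2$, and the coercive reaction $\eta\,\ee^{\eta t_0}\phi_R(-\sN)(\xi_0,\xi_0) \ge \eta\,\ee^{\eta t_0}\phi_R\,\varepsilon_1'$ coming from the exponential weight. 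The mixed second-fundamental-form cross terms are absorbed by Cauchy--Schwarz and Young's inequality, using the critical-point identity $\varepsilon_2\,\Hv_{\xi_0}^2 = \ee^{\eta t_0}\phi_R(-\sN)(\xi_0,\xi_0)$ to trade powers of $|\Hv_{\xi_0}|$ against $\varepsilon_2$, while the $\phi_R$-derivative contributions of order $R^{-2} + \|x_0\|_{\Rb^m}/R^2$ from the first lemma of this section are killed by sending $R \to \infty$ after all other parameters are fixed.

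The main obstacle, compared with the Euclidean-target case, will be the two new ambient curvature contributions
\[
    2\ee^{\eta t_0}\phi_R \sum_{i,j=1}^m \RmMN\bigl(\dd F(e_i),\dd F(e_j),\dd F(e_i),\xi_0\bigr)\RsMN\bigl(\dd F(e_j),\xi_0\bigr)
\]
and
\[
    2\varepsilon_2 \sum_{i=1}^m \RmMN\bigl(\Hv,\dd F(e_i),\dd F(e_i),\xi_0\bigr)\Hv_{\xi_0}.
\]
Because $\RmMN = 0 \oplus \RmN$ on the product $\Rb^m \times N$, both expressions involve only the $N$-components of the relevant vectors, and their magnitudes are uniformly controlled by the bounded-geometry of $N$. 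The first is of order $\ee^{\eta t_0}\phi_R$ and can be absorbed into the coercive $\eta$-reaction by enlarging $\eta$ (depending on $\|\RmN\|$, $\varepsilon_1$, $m$). The second is at worst of order $\varepsilon_2\,\|\Hv\|^2$, and here the hypothesis $\sec_N \le 0$ enters essentially: combined with the fact that $\RsMN$ restricts to $-\gN$ on the pure-$N$ part of the normal bundle by \eqref{eq:sOnNormalBdl}, a careful decomposition of $\xi_0$ and $\Hv$ in the singular-value basis of Section~\ref{sec:SVD} should supply the correct sign. Choosing $\varepsilon_2$ small and $\eta$ large in terms of $\varepsilon_1$, $m$ and the geometry of $N$ then closes the contradiction, and letting $R \to \infty$ yields $\sN + \varepsilon_2 \uptheta \le 0$ on all of $\Rb^m \times [0,T)$.
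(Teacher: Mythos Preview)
Your overall architecture is right, but the way you propose to dispose of the two ambient-curvature terms does not close.

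First, you plan to absorb the term
\[
2\ee^{\eta t_0}\phi_R \sum_{i,j} \RmMN\bigl(\dd F(e_i),\dd F(e_j),\dd F(e_i),\xi_0\bigr)\RsMN\bigl(\dd F(e_j),\xi_0\bigr)
\]
into the $\eta$-reaction by \emph{enlarging} $\eta$. But the whole point of the weights $\ee^{\eta t}\phi_R$ is that at the very end one sends $R\to\infty$ and then $\eta\to 0$; with $\eta$ bounded below by a positive constant depending on $\|\RmN\|$ you only get $\varepsilon_2\,\uptheta \le -\ee^{\eta t}\sN$, which is strictly weaker than $\sN+\varepsilon_2\uptheta\le 0$ and does not yield a time-uniform bound on $\|\Hv\|^2$. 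So this step fails.

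Second, for the term $2\varepsilon_2\sum_i \RmMN\bigl(\Hv,\dd F(e_i),\dd F(e_i),\xi_0\bigr)\Hv_{\xi_0}$ you invoke $\sec_N\le 0$ and hope a decomposition in the singular-value basis ``should supply the correct sign''. In fact no sign condition on $\sec_N$ is used (or helps) here; only the bounded geometry of $N$ enters. The key device you are missing is the \emph{full} null-eigenvector identity at $(x_0,t_0)$: $\upchi(\xi_0,\eta)=0$ for \emph{every} normal $\eta$, i.e.\ $\varepsilon_2\,\Hv_{\xi_0}\Hv_{\eta}=-\ee^{\eta t_0}\phi_R\,\sN(\xi_0,\eta)$. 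Using this with $\eta=\xi_k$ converts the second curvature term into one of the same type as the first, so that both together are bounded below by $-2C\,\ee^{\eta t_0}\phi_R$ with $C$ depending only on the geometry of $N$ and the singular-value bound. Then the identity with $\eta=\xi_0$ gives $\ee^{\eta t_0}\phi_R\le(\varepsilon_2/\varepsilon_1)\Hv_{\xi_0}^2$, so the combined curvature contribution is $\ge -2C(\varepsilon_2/\varepsilon_1)\Hv_{\xi_0}^2$. This is absorbed by the good term $\Bc\ge 2(\varepsilon_1/m)\ee^{\eta t_0}\Hv_{\xi_0}^2$ once $\varepsilon_2<\min\{2\varepsilon_1/m,\ \varepsilon_1^2/(mC)\}$, leaving $\eta$ free to tend to $0$.
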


\begin{proof}
	We follow the same strategy as in the proof of
	\cite[Lemma 5.2]{Lub16}. Fix any $T'\in[0,T)$. We will
	first show that we can choose $R_0>0$, such that $\upchi\ge 0$ on $\Rb^m\times[0,T')$
	for all $R\ge R_0$.
	
	Suppose $\upchi$ is not positive on $\Rb^m\times[0,T']$ for some $R\ge R_0$. Then,
	as $\upchi>0$ on $\Rb^m\times\{0\}$, $\sind-\varepsilon_1\gind\ge 0$ (and thus
	$\sN+\varepsilon_1\gNorm\le 0$) on $[0,T)$, $\phi_R(x)\to\infty$ as $\|x\|\to\infty$
	and by the bounded geometry condition \eqref{eq:BddGeom1}, it follows that $\upchi>0$
	outside some compact set $K\subset \Rb^m$ and all $t\in[0,T']$. We conclude
	that there exists $(x_0,t_0)\in K\times[0,T']$, such that $\upchi$ has a zero
	eigenvalue at $(x_0,t_0)$ and that $t_0$ is the first such time. In other
	words, we have $\upchi_{|(x_0,t_0)}(\xi,\eta)=0$ for some nonzero vector $\xi$
	and all vectors $\eta$, and $\upchi>0$ on $\Rb^m\times[0,t_0)$. Extend $\xi$
	to a local smooth vector field.
	By the second derivative criterion, at $(x_0,t_0)$ we have
	\begin{equation}
		\label{eq:ChiEst}
		\upchi(\xi,\eta)=0 \,, \quad (\nN\upchi)(\xi,\xi)=0\,, \quad (\nN_{\dt}\upchi)(\xi,\xi)\le 0 \quad\text{and} \quad (\LapN\upchi)(\xi,\xi)\ge 0
	\end{equation}
	for all $\eta\in T_{x_0}\Rb^m$. Let us set
	\begin{align*}
		\Ac &\coloneqq \ee^{\eta t} \Big\{ - \eta \phi_R \sN(\xi,\xi) + (\Lap \phi_R)\sN(\xi,\xi) + 2 \<\nabla\phi_R, (\nN\sN)(\xi,\xi)\> \Bigr\} \,, \\
		\Bc &\coloneqq -2\ee^{\eta t}\phi_R \sum_{i,j=1}^m \A_{\xi}(e_i,e_j)\RsMN\bigl( \A(e_i,e_j), \xi \bigr) \\
				&\qquad + 2\ee^{\eta t}\phi_R \sum_{i,j,k=1}^m \A_{\xi}(e_i,e_j) \A_{\xi}(e_i,e_k) \sind(e_j,e_k) \\
			& \qquad - 2 \varepsilon_2 \left\{ \sum_{i,j=1}^m \A_{\Hv}(e_i,e_j) \A_{\xi}(e_i,e_j) \Hv_{\xi} - \sum_{i=1}^m \bigl\< \nN_{e_i} \Hv, \xi \bigr\>^2 \right\} \,, \\
		\Cc &\coloneqq 2 \ee^{\eta t} \phi_R \sum_{i,j=1}^m \RmMN\bigl( \dd F(e_i), \dd F(e_j), \dd F(e_i), \xi \bigr) \RsMN\bigl( \dd F(e_j), \xi \bigr) \\
			& \qquad + 2 \varepsilon_2 \sum_{i=1}^m \RmMN\bigl( \Hv, \dd F(e_i), \dd F(e_i), \xi \bigr) \Hv_{\xi} \,.
	\end{align*}
	Then at $(x_0,t_0)$ it is
	\[
		0 \stackrel{\text{Eq.\ \eqref{eq:ChiEst}}}{\ge} (\nN_{\dt}\upchi)(\xi,\xi) - (\LapN\upchi)(\xi,\xi) = \Ac + \Bc + \Cc \,.
	\]
	The proof of \cite[Lemma 5.2]{Lub16} yields that
	that we can choose $R_0>0$ (depending
	on $\eta$ and $T'$) large enough,
	so that
	\[
		\Ac > 0 \qquad \text{for any $x_0$ and for all $R\ge R_0$} \,,
	\]
	and, furthermore, if $\varepsilon_2$ satisfies 
	$0<\varepsilon_2\le \frac{2\varepsilon_1}{m}$, we obtain 
	\[
		\Bc \ge 2 \ee^{\eta t_0}\phi_R \varepsilon_1 \sum_{i,j=1}^m \A_{\xi}^2(e_i,e_j) \ge 2\ee^{\eta t_0} \phi_R \frac{\varepsilon_1}{m} \Hv_{\xi}^2 \stackrel{\phi_R\ge 1}{\ge} 2\ee^{\eta t_0} \frac{\varepsilon_1}{m} \Hv_{\xi}^2 \,.
	\]
	It remains to show that $\Bc+\Cc\ge 0$.
	Eq.\ \eqref{eq:ChiEst} implies $\ee^{\eta t_0} \phi_R \sN(\xi,\eta)=-\varepsilon_2 \uptheta(\xi,\eta)$ at $(x_0,t_0)$, and we calculate
	\begin{align*}
		\Cc &\stackrel{\phantom{\text{Eq.\ \eqref{eq:ChiEst}}}}{=} 2 \ee^{\eta t_0} \phi_R \sum_{i,j=1}^m \RmMN\bigl( \dd F(e_i), \dd F(e_j), \dd F(e_i), \xi \bigr) \RsMN\bigl( \dd F(e_j), \xi \bigr) \\
				& \qquad \qquad + 2 \varepsilon_2 \sum_{i=1}^m \sum_{k=1}^n \RmMN\bigl( \xi_k, \dd F(e_i), \dd F(e_i), \xi \bigr) \Hv_{\xi_k} \Hv_{\xi} \\
			&\stackrel{\text{Eq.\ \eqref{eq:ChiEst}}}{=} 2 \ee^{\eta t_0} \phi_R \sum_{i,j=1}^m \RmMN\bigl( \dd F(e_i), \dd F(e_j), \dd F(e_i), \xi \bigr) \RsMN\bigl( \dd F(e_j), \xi \bigr) \\
				& \qquad \qquad - 2 \ee^{\eta t_0}\phi_R \sum_{i=1}^m \sum_{k=1}^n \RmMN\bigl( \xi_k, \dd F(e_i), \dd F(e_i), \xi \bigr) \sN( \xi_k, \xi ) \,.
	\end{align*}
	From the bounded geometry of $N$ and the boundedness of the singular values of the map $f_t$ we infer that there is
	a finite constant $C>0$ depending only on the geometry of $N$ and the singular values, such that $\Cc$ may
	be estimated as
	\[
		\Cc \ge - 2 C \ee^{\eta t_0} \phi_R \,.
	\]
	Again using Eq.\ \eqref{eq:ChiEst} yields
	\[
		\varepsilon_2 \Hv_{\xi}^2 = - \ee^{\eta t_0} \phi_R \sN(\xi,\xi) \ge \ee^{\eta t_0} \phi_R \varepsilon_1 > 0 \,,
	\]
	so that
	\[
		\frac{\varepsilon_2}{\varepsilon_1} \Hv_{\xi}^2 \ge \ee^{\eta t_0} \phi_R > 0
	\]
	and accordingly
	\[
		\Cc \ge - 2 C \frac{\varepsilon_2}{\varepsilon_1} \Hv_{\xi}^2 \,.
	\]
	From the estimate for $\Bc$, we then get
	\[
		\Bc + \Cc \ge 2 \ee^{\eta t_0} \left( \frac{\varepsilon_1}{m} - C \frac{\varepsilon_2}{\varepsilon_1} \right) \Hv_{\xi}^2 \,.
	\]
	Now choose $\varepsilon_2$, such that $0<\varepsilon_2<\min\bigl\{ \frac{2\varepsilon_1}{m}, \frac{\varepsilon_1^2}{mC}\bigr\}$.
	Then it is $\Bc+\Cc > 0$ and furthermore $\Ac+\Bc+\Cc>0$, 
	which contradicts Eq.\ \eqref{eq:ChiEst}, so that $\upchi<0$ along the flow.
	
	The claim of the lemma follows by first sending $R\to\infty$, then $\eta\to 0$, 
	and finally $T'\to T$.
\end{proof}

\begin{corollary}
	Under the mean curvature flow, the mean curvature vector of the graph of
	a length-decreasing map $f_t:\Rb^m\to N$ stays bounded, i.\,e.\ there
	is a constant $C>0$, such that
	\[
		\|\Hv\|^2\le C \,.
	\]
\end{corollary}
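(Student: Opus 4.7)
The plan is to reduce the corollary to a direct pointwise consequence of lemma \ref{lem:MCVEst} combined with the preservation of the length-decreasing property. Since the initial map $f:\Rb^m\to N$ satisfies \eqref{eq:LengthDecr} for some $\delta\in(0,1]$, the eigenvalue formula \eqref{eq:sSV} shows there exists $\varepsilon_1>0$ with $\sind - \varepsilon_1 \gind \ge 0$ at $t=0$. Lemma \ref{lem:LenPres} then ensures $\sind - \varepsilon_1 \gind \ge 0$ on $\Rb^m\times[0,T)$, and lemma \ref{lem:MCVEst} provides a constant $\varepsilon_2>0$ (depending on $\varepsilon_1$, $m$, and the geometry of $N$) such that $\sN + \varepsilon_2 \upvartheta \le 0$ along the flow.

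To extract a scalar bound from this tensor inequality, I would evaluate it at a distinguished unit normal vector. At any space-time point where $\Hv\ne 0$, set $\xi \coloneqq \Hv/\|\Hv\|$, which is a $\gNorm$-unit vector in the normal bundle. By definition of $\upvartheta$, and since $\prN(\xi)=\xi$ for normal vectors, one obtains $\upvartheta(\xi,\xi) = \Hv_\xi^2 = \|\Hv\|^2$. Consequently $\varepsilon_2 \|\Hv\|^2 \le -\sN(\xi,\xi)$.

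To bound the right-hand side uniformly, I would invoke the singular value decomposition from section \ref{sec:SVD}. By \eqref{eq:sOnNormalBdl} the eigenvalues of $\sN$ with respect to $\gNorm$ all lie in the interval $[-1,0)$ (the lower bound coming from the pure normal directions and the upper bound from $\sind\ge\varepsilon_1\gind$), so $-\sN(\xi,\xi) \le 1$ for every $\gNorm$-unit normal vector $\xi$. Combining this with the previous inequality yields $\|\Hv\|^2 \le 1/\varepsilon_2 =: C$, which is the assertion. At points where $\Hv$ already vanishes the bound is trivial.

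The essential obstacle has already been absorbed into lemmas \ref{lem:LenPres} and \ref{lem:MCVEst}; the corollary itself is little more than an algebraic manipulation. The only care required is the observation that testing $\upvartheta$ against the direction $\Hv/\|\Hv\|$ produces exactly $\|\Hv\|^2$, and that the singular value decomposition delivers a pointwise bound on $-\sN$ that is independent of both the point and the time, so no additional smallness hypothesis on the solution enters.
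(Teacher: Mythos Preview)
Your argument is correct and follows the same strategy as the paper: both reduce the corollary to a one-line pointwise extraction from the tensor inequality $\sN+\varepsilon_2\upvartheta\le 0$ of lemma \ref{lem:MCVEst}. The only difference is that the paper traces over an orthonormal basis of the normal space to obtain $\varepsilon_2\|\Hv\|^2=\varepsilon_2\sum_k\Hv_{\xi_k}^2\le -\sum_k\sN(\xi_k,\xi_k)\le n$, whereas you evaluate at the single direction $\xi=\Hv/\|\Hv\|$ and use the eigenvalue bound $-\sN(\xi,\xi)\le 1$ from \eqref{eq:sOnNormalBdl}; your route yields the slightly sharper constant $C=1/\varepsilon_2$ instead of $C=n/\varepsilon_2$.
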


\begin{proof}
	Let $\{\xi_1,\dotsc,\xi_n\}$ be an orthonormal basis of $T_{x_0}^{\perp}\Rb^m$. Using
	Lemma \ref{lem:MCVEst}, we obtain
	\[
		\varepsilon_2 \|\Hv\|^2 = \varepsilon_2 \sum_{k=1}^n \Hv_{\xi_k}^2 \le - \sum_{k=1}^n \sN(\xi_k,\xi_k) \le n \,,
	\]
	which establishes the claim.
\end{proof}

\subsection{Improved First-Order Estimate}

In some cases, namely if $m>1$ and $\inf_{\Rb^m\times\{0\}}\tr(\sind)<m-1$, 
the estimate on the singular values of $\dd f_t$ can be improved.

\begin{lemma}
	\label{lem:trEst}
	Assume $N$ satisfies the curvature bound
	\[
		\sec_N \le -\sigma < 0
	\]
	for some constant $\sigma>0$. If $f_t:\Rb^m\to N$ is a family of strictly
	length-decreasing maps that evolve under the mean curvature flow, the
	trace of the tensor $\sind$ satisfies
	\[
		(\partial_t - \Delta) \tr(\sind) \ge 2 \sigma \sum_{k=1}^m \frac{2}{1+\lambda_k^2} \frac{\lambda_k^2}{1+\lambda_k^2} \left\{ \sum_{l=1}^m \frac{\lambda_l^2}{1+\lambda_l^2} - \frac{\lambda_k^2}{1+\lambda_k^2} \right\} \,.
	\]
\end{lemma}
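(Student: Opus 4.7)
The plan is to trace the tensorial evolution equation for $\sind$ from Lemma~\ref{lem:sEvolEq} against the induced metric $\gind$ and then exploit the singular value decomposition frame of Section~\ref{sec:SVD} to make all curvature contributions explicit.

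At a fixed space--time point $(x_0,t_0)$, I would select the SVD frame $\{e_1,\dotsc,e_m\}$: a $\gind$-orthonormal basis that simultaneously diagonalises $\sind$, with $\sind(e_i,e_i) = s_i = (1-\lambda_i^2)/(1+\lambda_i^2)$, so $\tr(\sind) = \sum_i s_i$. Because $\gind$ itself evolves under MCF via $\partial_t \gind_{ij} = -2\,\gMN(\Hv,\A(e_i,e_j))$, differentiating the trace yields
\[
(\partial_t - \Delta)\tr(\sind) = \sum_i \bigl(\nabla_{\dt}\sind - \Lap \sind\bigr)(e_i,e_i) + 2\sum_{i,k} s_i\,\gMN\bigl(\A(e_i,e_i),\A(e_k,e_k)\bigr),
\]
where the last term is the correction coming from $\partial_t \gind^{ij}$ in the SVD frame.

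I would next substitute Lemma~\ref{lem:sEvolEq} for the first summand and apply the Gauss equation \eqref{eq:Gauss} to rewrite the intrinsic Ricci contribution $-2\sum_i \sind(\Ric e_i,e_i)$ in terms of the ambient $F^*\RmMN$ and extra quadratic-in-$\A$ pieces. Because $\Rb^m$ is flat, $F^*\RmMN(e_k,e_i,e_k,e_i)$ collapses to $f_t^*\RmN(e_k,e_i,e_k,e_i)$, so the two curvature contributions merge into a single weighted sum of $f_t^*\RmN(e_k,e_i,e_k,e_i)$ terms. Using the explicit expression $\dd f(e_i) = \lambda_i\beta_{n-m+i}/\sqrt{1+\lambda_i^2}$ from Section~\ref{sec:SVD},
\[
f_t^*\RmN(e_k,e_i,e_k,e_i) = \frac{\lambda_k^2 \lambda_i^2}{(1+\lambda_k^2)(1+\lambda_i^2)}\sec_N\bigl(\beta_{n-m+k},\beta_{n-m+i}\bigr)
\]
for $i\neq k$, and the hypothesis $\sec_N \le -\sigma$ converts each such summand into a non-negative lower bound of order $\sigma\mu_k\mu_i$ with $\mu_k = \lambda_k^2/(1+\lambda_k^2)$. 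Careful accounting of the coefficients coming from the direct curvature term, the Gauss-inherited contribution weighted by $s_i$, and the $\partial_t\gind^{-1}$ correction is expected to reconstitute the factor $\tfrac{2}{1+\lambda_k^2}\tfrac{\lambda_k^2}{1+\lambda_k^2}$ featured in the claimed bound.

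Finally, the residual $\A$-quadratic terms---coming from the Gauss rewrite of $\Rind$, the direct $-2\RsMN(\A_{ki},\A_{ki})$ term, and the metric-inverse correction---must be shown to combine into a non-negative expression that can be discarded from the lower bound. Here one uses the identification \eqref{eq:sOnNormalBdl} of $\RsMN$ on the normal bundle, diagonal with strictly negative eigenvalues $-1$ and $-s_{i+m-n}$ under the length-decreasing hypothesis, so that each $-\RsMN(\A_{ki},\A_{ki})$ summand is non-negative. The main obstacle is precisely this algebraic bookkeeping: organising the three groups of $\A$-quadratics so that their sum is manifestly non-negative, while simultaneously tracking the weights $(1+\lambda_k^2)^{-1}$ through the Gauss substitution so the curvature terms assemble exactly into the stated expression. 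Once these two algebraic identities are verified, the estimate on $(\partial_t-\Delta)\tr(\sind)$ follows pointwise.
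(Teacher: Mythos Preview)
Your approach is essentially the paper's: trace Lemma~\ref{lem:sEvolEq}, replace the Ricci term via the Gau\ss{} equation~\eqref{eq:Gauss}, discard the second-fundamental-form quadratics by sign, and bound the remaining $f_t^*\RmN$ terms using $\sec_N\le -\sigma$ in the singular-value frame.

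One clarification is worth making about the ``three groups of $\A$-quadratics'' you anticipate. The mean-curvature-type term $2\sum_{i,k}s_i\,\gMN\bigl(\A(e_i,e_i),\A(e_k,e_k)\bigr)$ coming from $\partial_t\gind^{-1}$ is \emph{not} sign-definite on its own and therefore cannot simply be thrown away; what actually happens is that it cancels identically against the $\gMN\bigl(\A(e_k,e_k),\A(e_i,e_i)\bigr)$ contribution that the Gau\ss{} substitution produces inside the Ricci term. After this cancellation the only surviving $\A$-quadratics are
\[
-2\sum_{k,l}\sN\bigl(\A(e_k,e_l),\A(e_k,e_l)\bigr)+2\sum_{k,l}\frac{1-\lambda_k^2}{1+\lambda_k^2}\,\|\A(e_k,e_l)\|^2,
\]
both manifestly non-negative under the length-decreasing hypothesis (the first by \eqref{eq:sOnNormalBdl}, the second since each $\lambda_k^2\le 1$). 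The same cancellation is what makes the curvature coefficient come out as $1+s_k=2/(1+\lambda_k^2)$ rather than $1-s_k$, so the bookkeeping you flag as the ``main obstacle'' resolves more cleanly than you expect.
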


\begin{proof}
	Using lemma \ref{lem:sEvolEq} and the Gau{\ss} equation \eqref{eq:Gauss}, we obtain the evolution equation
	for the trace of the tensor $\sind$,
	\begin{multline*}
		(\partial_t - \Delta) \tr(\sind) = -2 \sum_{k,l=1}^m \left( \RsMN - \frac{1-\lambda_k^2}{1+\lambda_k^2} \RgMN \right) \bigl( \A(e_k,e_l), \A(e_k,e_l) \bigr) \\
			- 2 \sum_{k,l=1}^m \frac{2}{1+\lambda_k^2} f^*\RmN(e_k,e_l,e_k,e_l) \,.
	\end{multline*}
	Since the maps $f_t$ are length-decreasing, it is $\RsMN\bigl( \A(u,v), \A(u,v) \bigr)\le 0$
	for any $u,v\in\Gamma(T\Rb^m)$ and also $\lambda_k^2\le 1$. Consequently,
	\begin{align*}
		(\partial_t - \Delta) \tr(\sind) &\ge - 2 \sum_{k,l=1}^m \frac{2}{1+\lambda_k^2} f^*\RmN(e_k,e_l,e_k,e_l) \\
			& = -2 \sum_{k\ne l} \frac{2}{1+\lambda_k^2} \sec_N\bigl( \dd f(e_k) \wedge \dd f(e_l) \bigr) f^*\gN(e_k,e_k) f^*\gN(e_l,e_l) \\
			& \ge 2 \sigma \sum_{k=1}^m \frac{2}{1+\lambda_k^2} \frac{\lambda_k^2}{1+\lambda_k^2} \left\{ \sum_{l=1}^m \frac{\lambda_l^2}{1+\lambda_l^2} - \frac{\lambda_k^2}{1+\lambda_k^2} \right \} \,. \qedhere
	\end{align*}
\end{proof}

\begin{corollary}
	\label{cor:trEst}
	Assume $N$ satisfies the curvature bound
	\[
		\sec_N \le -\sigma < 0
	\]
	for some constant $\sigma>0$. If $f_t:\Rb^m\to N$ is a family 
	of strictly length-decreasing maps,
	then under the mean curvature flow, the
	trace of the tensor $\sind$ satisfies
	\[
		(\partial_t - \Delta) \tr(\sind) \ge \frac{\sigma}{2} \bigl(m-\tr(\sind)\bigr)(m-1-\tr(\sind)) \,.
	\]
\end{corollary}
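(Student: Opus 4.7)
The strategy is to reduce the lower bound from Lemma~\ref{lem:trEst} to a purely algebraic inequality in the singular values by choosing a convenient reparametrization. Introduce
\[
\mu_k \coloneqq \frac{\lambda_k^2}{1+\lambda_k^2}, \qquad k=1,\dots,m,
\]
so that $\tfrac{1}{1+\lambda_k^2} = 1-\mu_k$ and, crucially, the strict length-decreasing property $\lambda_k^2 \le 1$ yields the uniform bound $\mu_k \in [0,\tfrac{1}{2}]$. A direct computation then gives $\tr(\sind) = \sum_{k=1}^m \tfrac{1-\lambda_k^2}{1+\lambda_k^2} = m - 2S$, where
\[
S \coloneqq \sum_{k=1}^m \mu_k = \tfrac{1}{2}\bigl(m - \tr(\sind)\bigr).
\]

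With this notation the estimate of Lemma~\ref{lem:trEst} reads
\[
(\partial_t - \Lap)\tr(\sind) \;\ge\; 4\sigma \sum_{k=1}^m \mu_k(1-\mu_k)(S-\mu_k),
\]
where $S-\mu_k \ge 0$ for every $k$ since $\mu_k \le \sum_l \mu_l = S$. The plan is then a short two-step estimate. First use $1-\mu_k \ge \tfrac{1}{2}$ (which is precisely the length-decreasing input) to obtain
\[
4\sigma \sum_{k=1}^m \mu_k(1-\mu_k)(S-\mu_k) \;\ge\; 2\sigma \sum_{k=1}^m \mu_k(S-\mu_k) \;=\; 2\sigma\bigl(S^2 - \textstyle\sum_k \mu_k^2\bigr).
\]
Second, use $\mu_k^2 \le \tfrac{1}{2}\mu_k$, again a consequence of $\mu_k \le \tfrac{1}{2}$, to deduce $\sum_k \mu_k^2 \le \tfrac{S}{2}$, hence $S^2 - \sum_k \mu_k^2 \ge S\bigl(S-\tfrac{1}{2}\bigr)$. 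Substituting back gives
\[
(\partial_t - \Lap)\tr(\sind) \;\ge\; 2\sigma S\bigl(S-\tfrac{1}{2}\bigr) \;=\; \sigma S (2S-1) \;=\; \tfrac{\sigma}{2}\bigl(m-\tr(\sind)\bigr)\bigl(m-1-\tr(\sind)\bigr),
\]
which is the claimed inequality.

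There is no genuine obstacle here: once Lemma~\ref{lem:trEst} is in place, everything reduces to elementary algebra with the singular values, and the only nontrivial input is the pointwise bound $\lambda_k^2 \le 1$ coming from length-decreasingness, which manifests in exactly the two places where we replace $1-\mu_k$ and $\mu_k$ by their $\tfrac{1}{2}$-bounds. The entire content of the corollary is that these two uniform bounds, applied to the expression in Lemma~\ref{lem:trEst}, produce a right-hand side that factors neatly through $\tr(\sind)$ alone — allowing the ODE comparison argument used in the proof of Theorem~\ref{thm:ThmA}~\ref{mainthm:ii} to be run.
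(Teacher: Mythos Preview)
Your proof is correct and follows essentially the same route as the paper's: both start from Lemma~\ref{lem:trEst}, use the length-decreasing bound $\lambda_k^2\le 1$ to replace the factor $\tfrac{2}{1+\lambda_k^2}=2(1-\mu_k)$ by $1$, and then apply the same quadratic estimate (your $\mu_k^2\le\tfrac12\mu_k$ is exactly the paper's $\sind_{kk}^2\le\sind_{kk}$ rewritten via $\sind_{kk}=1-2\mu_k$). Your substitution $\mu_k$ makes the algebra a bit more transparent, but the argument is the same.
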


\begin{proof}
	Using $2\tr(f^*\gN) = m - \tr(\sind)$ and $\lambda_k^2\le 1$, we calculate
	\begin{align*}
		(\partial_t - \Delta) \tr(\sind) &\stackrel{\hphantom{{0\le\sind_{kk}\le 1}}}{\ge} 2 \sigma \sum_{k=1}^m \underbrace{\frac{2}{1+\lambda_k^2}}_{\ge 1} \frac{\lambda_k^2}{1+\lambda_k^2} \underbrace{\left\{ \sum_{l=1}^m \frac{\lambda_l^2}{1+\lambda_l^2} - \frac{\lambda_k^2}{1+\lambda_k^2} \right\}}_{\ge 0} \\
			&\stackrel{\hphantom{0\le\sind_{kk}\le 1}}{\ge} 2 \sigma \sum_{k=1}^m \frac{\lambda_k^2}{1+\lambda_k^2} \Biggl\{ \underbrace{\sum_{l=1}^m \frac{\lambda_l^2}{1+\lambda_l^2}}_{= \tr(f^*\gN)} - \frac{\lambda_k^2}{1+\lambda_k^2} \Biggr\} \\
			&\stackrel{\hphantom{0\le\sind_{kk}\le 1}}{=} \sigma \sum_{k=1}^m \underbrace{\frac{\lambda_k^2}{1+\lambda_k^2}}_{=f^*\gN(e_k,e_k)} \Biggl\{ m-\tr(\sind) - \frac{2\lambda_k^2}{1+\lambda_k^2} \Biggr\} \\
			&\stackrel{\hphantom{0\le\sind_{kk}\le 1}}{=} \frac{\sigma}{2} \Biggl\{ \bigl( m - \tr(\sind) \bigr)^2 - 4 \underbrace{\sum_{k=1}^m \left( \frac{\lambda_k^2}{1+\lambda_k^2} \right)^2}_{= \frac{1}{4} \sum_{k=1}^m \left( 1 - \sind_{kk} \right)^2 } \Biggr\} \\
			&\stackrel{\hphantom{0\le\sind_{kk}\le 1}}{=} \frac{\sigma}{2} \left\{ \bigl(m-\tr(\sind)\bigr)^2 - \left( m - 2 \tr(\sind) + \sum_{k=1}^m \sind_{kk}^2 \right) \right\} \\
			&\stackrel{0\le\sind_{kk}\le 1}{\ge}\frac{\sigma}{2} \bigl( (m-\tr(\sind))^2 -( m-\tr(\sind) \bigr) \\
			&\stackrel{\hphantom{0\le\sind_{kk}\le 1}}{=} \frac{\sigma}{2}\bigl(m-\tr(\sind)\bigr)\bigl( m-1-\tr(\sind) \bigr) \,. \qedhere
	\end{align*}
\end{proof}

In the following, we assume $\inf_{x\in\Rb^m} \tr(\sind) < m-1$. Let us set
\[
	\upgamma(x,t) \coloneqq \ee^{\eta t} \phi_R(x) \tr(\sind) - \frac{c_1 (m-1) \exp\left( \frac{\sigma}{2} t \right) - m}{c_1\exp\left( \frac{\sigma}{2}t  \right) - 1} \,,
\]
where
\[
	c_1 \coloneqq 1 + \frac{1}{(m-1)-\inf_{\Rb^m\times\{0\}}\tr(\sind)} > 1 \,.
\]

\begin{lemma}
	\label{lem:gammaEvolEq}
	The function $\upgamma$ satisfies the evolution inequality
	\begin{align*}
		(\partial_t - \Delta) \upgamma &\ge \ee^{\eta t} \phi_R \frac{\sigma}{2} \bigl(m-\tr(\sind)\bigr)\bigl(m-1-\tr(\sind)\bigr) - \frac{\sigma}{2} \frac{c_1 \exp\left(\frac{\sigma}{2} t\right)}{\left( c_1 \exp\left(\frac{\sigma}{2}t\right)-1\right)^2} \\
			& \quad + \ee^{\eta t}\Bigl\{ \eta \phi_R \tr(\sind) - 2 \< \nabla \phi_R, \nabla \tr(\sind) \> - (\Delta \phi_R)\tr(\sind) \Bigr\} \,.
	\end{align*}
\end{lemma}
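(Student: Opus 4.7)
The plan is to compute $(\partial_t - \Delta)\upgamma$ directly by splitting $\upgamma = \upgamma_1 + \upgamma_2$ into its spatially-varying part and its purely temporal part, handling each with the product rule, and finally invoking Corollary \ref{cor:trEst} to replace $(\partial_t - \Delta)\tr(\sind)$ by its quadratic lower bound.

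For $\upgamma_1(x,t) \coloneqq \ee^{\eta t} \phi_R(x) \tr(\sind)(x,t)$, the Leibniz rule for both $\partial_t$ and $\Delta$ yields
\[
    (\partial_t - \Delta) \upgamma_1 = \ee^{\eta t} \phi_R \, (\partial_t - \Delta) \tr(\sind) + \ee^{\eta t}\bigl\{ \eta \phi_R \tr(\sind) - 2 \< \nabla \phi_R, \nabla \tr(\sind)\> - (\Delta \phi_R) \tr(\sind) \bigr\} \,,
\]
where the three cross terms $(\Delta\phi_R)\tr(\sind)$, $2\<\nabla\phi_R,\nabla\tr(\sind)\>$ and the $\eta\phi_R\tr(\sind)$ from $\partial_t$ combine into exactly the bracket appearing in the claim. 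Corollary \ref{cor:trEst} then bounds the first summand below by $\ee^{\eta t}\phi_R\tfrac{\sigma}{2}\bigl(m-\tr(\sind)\bigr)\bigl(m-1-\tr(\sind)\bigr)$, which is the leading term on the right-hand side.

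For the scalar piece $\upgamma_2(t)$ one has $\Delta\upgamma_2=0$ by spatial constancy, and writing $u(t)\coloneqq c_1\ee^{\sigma t/2}$ one has $\upgamma_2 = -\bigl((m-1)u-m\bigr)/(u-1)$. The quotient rule together with the cancellation
\[
    (m-1)(u-1) - \bigl((m-1)u - m\bigr) = -(m-1) + m = 1
\]
collapses the numerator to $u'(t) = \tfrac{\sigma}{2} c_1 \ee^{\sigma t/2}$, giving
\[
    \partial_t \upgamma_2 = -\frac{u'(t)}{(u-1)^2} = -\frac{\sigma}{2}\,\frac{c_1\ee^{\sigma t/2}}{\bigl(c_1\ee^{\sigma t/2}-1\bigr)^2} \,,
\]
which is precisely the constant correction in the claimed inequality. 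Adding the two contributions completes the argument.

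The computation is entirely routine and no serious obstacle is expected; the only point worth flagging is the cancellation in the numerator of $\partial_t\upgamma_2$, which is not an accident but reflects that the scalar correction in $\upgamma_2$ is an explicit solution of the ODE $\dot y = \tfrac{\sigma}{2}(m-y)(m-1-y)$ with initial datum tuned by $c_1$, i.e.\ the ODE governing the lower barrier for $\tr(\sind)$ supplied by Corollary \ref{cor:trEst}.
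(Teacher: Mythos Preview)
Your proof is correct and follows essentially the same route as the paper: expand $(\partial_t-\Delta)\upgamma$ by the product rule, compute the time derivative of the purely temporal barrier term, and then apply Corollary~\ref{cor:trEst}. The paper simply states the resulting identity and invokes the corollary without spelling out the quotient-rule cancellation you verify explicitly; your remark that $\upgamma_2$ solves the ODE $\dot y=\tfrac{\sigma}{2}(m-y)(m-1-y)$ is a helpful gloss but not needed for the argument.
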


\begin{proof}
	We calculate
	\begin{align*}
		(\partial_t - \Delta) \upgamma &= \ee^{\eta t}\phi_R (\partial_t - \Delta)\tr(\sind) - \frac{\sigma}{2} \frac{c_1 \exp\left(\frac{\sigma}{2} t\right)}{\left( c_1 \exp\left(\frac{\sigma}{2}t\right)-1\right)^2} \\
			&\quad + \ee^{\eta t}\Bigl\{ \eta \phi_R \tr(\sind) - 2 \< \nabla \phi_R, \nabla \tr(\sind) \> - (\Delta \phi_R)\tr(\sind) \Bigr\} \,.
	\end{align*}
	The claim follows from corollary \ref{cor:trEst}.
\end{proof}

\begin{lemma}
	\label{lem:trDerivEst}
	Let $F(x,t)$ be a smooth solution to \eqref{eq:MCF} with bounded geometry and assume
	there exists $\varepsilon>0$, such that $\sind-\varepsilon\gind\ge 0$ for any $t\in[0,T)$.
	Fix any $T'\in[0,T)$ and $(x_0,t_0)\in\Rb^m\times[0,T']$. Then the following estimate
	holds at $(x_0,t_0)$,
	\[
		- c(T') \frac{\|x_0\|_{\Rb^m}}{R^2} \tr(\sind) \le \< \nabla \phi_R, \nabla \tr(\sind) \> \le c(T') \frac{\|x_0\|_{\Rb^m}}{R^2} \tr(\sind) \,,
	\]
	where $c(T')\ge 0$ is a constant depending only on $T'$.
\end{lemma}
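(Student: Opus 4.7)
The plan is to reduce this estimate to the componentwise bound already established in the unnamed lemma cited as \cite[Lemma 4.1]{Lub16}, which gives
\[
	\bigl| \bigl\langle \nabla \phi_R, (\nabla \sind)(v,v) \bigr\rangle \bigr| \le c(T') \frac{\|x_0\|_{\Rb^m}}{R^2} \sind(v,v)
\]
for any tangent vector $v$ at $(x_0,t_0)$. The identity $\tr(\sind) = \sum_{i=1}^m \sind(e_i,e_i)$ in a $\gind$-orthonormal frame then suggests that summing the tensorial estimate over the frame vectors yields the desired trace estimate, with the right-hand side assembling into $\tr(\sind)$.

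The concrete steps are as follows. First I would fix $(x_0,t_0)\in\Rb^m\times[0,T']$ and choose a local $\gind$-orthonormal frame $\{e_1,\dotsc,e_m\}$ on a neighborhood of $x_0$, obtained by extending an orthonormal basis at $x_0$ via parallel transport along radial $\gind$-geodesics. This guarantees $\nabla_X e_i|_{x_0} = 0$ for every tangent vector $X$ at $x_0$. Next, using compatibility of $\nabla$ with the inverse metric and writing $\tr(\sind) = \sum_{i=1}^m \sind(e_i,e_i)$ in this frame, I compute
\[
	\nabla \tr(\sind)\bigr|_{x_0} = \sum_{i=1}^m \nabla\bigl[ \sind(e_i,e_i) \bigr]\bigr|_{x_0} = \sum_{i=1}^m (\nabla \sind)(e_i,e_i)\bigr|_{x_0} \,,
\]
where the terms involving $\nabla e_i$ drop out precisely because of the parallelism at $x_0$.

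Pairing with $\nabla \phi_R$, applying the componentwise estimate from \cite[Lemma 4.1]{Lub16} with $v=e_i$, and using that the length-decreasing hypothesis $\sind - \varepsilon \gind \ge 0$ ensures each $\sind(e_i,e_i)\ge 0$, I obtain
\[
	\bigl| \bigl\langle \nabla \phi_R, \nabla \tr(\sind) \bigr\rangle \bigr| \le \sum_{i=1}^m \bigl| \bigl\langle \nabla \phi_R, (\nabla \sind)(e_i,e_i) \bigr\rangle \bigr| \le c(T') \frac{\|x_0\|_{\Rb^m}}{R^2} \sum_{i=1}^m \sind(e_i,e_i) \,,
\]
and the last sum is exactly $\tr(\sind)$, yielding both the upper and lower bound simultaneously.

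There is no serious obstacle: the entire argument is a bookkeeping step reducing the trace estimate to its tensorial counterpart. The only point requiring mild care is the choice of frame that is parallel at $x_0$ so that the derivatives of $e_i$ contribute nothing, which is a standard device; the constant $c(T')$ inherited from the componentwise estimate depends on $T'$ (through the two-sided comparison \eqref{eq:BddGeom2} between $\gind$ and $\RgM$) but not on $R$ or on $(x_0,t_0)$, so the conclusion follows without further input.
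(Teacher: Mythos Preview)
Your argument is correct, but it takes a different route from the paper. You reduce the trace estimate to the tensorial bound of \cite[Lemma~4.1]{Lub16} by writing $\nabla\tr(\sind)=\sum_i(\nabla\sind)(e_i,e_i)$ and summing the componentwise inequality over a $\gind$-orthonormal frame; this is efficient because it reuses work already done. The paper instead argues directly: it observes that $\nabla\tr(\sind)$ is uniformly bounded on $\Rb^m\times[0,T']$ by the bounded-geometry assumptions \eqref{eq:BddGeom1}--\eqref{eq:BddGeom2}, that $|\nabla\phi_R|$ is controlled by $\|x_0\|_{\Rb^m}/R^2$, and then inserts the factor $\tr(\sind)$ on the right-hand side using the uniform lower bound $\tr(\sind)\ge m\varepsilon>0$. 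Both arguments yield the same constant behavior; yours is a cleaner reduction, while the paper's is self-contained and makes explicit where the lower bound on $\tr(\sind)$ enters. A minor remark: the parallelism of your frame at $x_0$ is not actually needed, since $\nabla_u\tr(\sind)=\tr(\nabla_u\sind)$ follows directly from $\nabla\gind=0$ and holds in any orthonormal frame.
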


\begin{proof}
	Note that
	\[
		\nabla_u \tr(\sind) = \sum_{k=1}^m (\nabla_u\sind)(e_k,e_k) = 2 \sum_{k=1}^m \RsMN\bigl( \A(u,e_k), \dd F(e_k) \bigr) \,.
	\]
	The bounded geometry assumptions \eqref{eq:BddGeom1} and \eqref{eq:BddGeom2} imply that $\sind$,
	$\nabla\sind$ and therefore $\nabla\tr(\sind)$ are uniformly bounded on $\Rb^m\times[0,T']$ by a constant
	depending only on $T'$. Since also $\tr(\sind)\ge m\varepsilon$ by assumption, 
	at $(x_0,t_0)\in\Rb^m\times[0,T']$ we have
	\[
		-c(T') \frac{\|x_0\|_{\Rb^m}}{R^2} \tr(\sind) \le \< \nabla \phi_R, \nabla \tr(\sind) \> \le c(T') \frac{\|x_0\|_{\Rb^m}}{R^2} \tr(\sind) \,. \qedhere
	\]
\end{proof}

\begin{lemma}
	\label{lem:trTimeEst}
	Assume $m>1$ and that $N$ satisfies the curvature bound
	\[
		\sec_N \le -\sigma < 0
	\]
	for some constant $\sigma>0$. If $f_t:\Rb^m\to N$ is a family of strictly
	length-decreasing maps that evolve under the mean curvature flow
	and $\inf_{\Rb^m\times\{0\}}\tr(\sind)<m-1$,
	then the trace of the tensor $\sind$ satisfies the estimate
	\[
		\tr(\sind) \ge \frac{c_1 (m-1) \exp\left( \frac{\sigma}{2} t \right) - m}{c_1\exp\left( \frac{\sigma}{2}t  \right) - 1}\,,
	\]
	where
	\[
		c_1\coloneqq 1 + \frac{1}{(m-1)-\inf_{\Rb^m\times\{0\}}\tr(\sind)} \,.
	\]
\end{lemma}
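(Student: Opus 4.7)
The plan is a maximum-principle argument on the auxiliary function $\upgamma$ introduced just before Lemma~\ref{lem:gammaEvolEq}. Set $b(t) := \frac{c_1(m-1)\exp(\sigma t/2) - m}{c_1\exp(\sigma t/2) - 1}$ so that $\upgamma = \ee^{\eta t}\phi_R\tr(\sind) - b(t)$. A direct calculation from the definition of $c_1$ yields $b(0) = \inf_{\Rb^m\times\{0\}}\tr(\sind)$, and differentiation gives the ODE $b'(t) = \tfrac{\sigma}{2}(m-b)(m-1-b)$, so that $b$ is strictly increasing with $b(t) < m-1$. In particular $\upgamma(x,0) = \phi_R(x)\tr(\sind)(x,0) - b(0) \geq 0$ since $\phi_R \geq 1$. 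The goal is to prove $\upgamma \geq 0$ throughout $\Rb^m \times [0,T')$ for every $T' \in [0,T)$ and every sufficiently large $R$; the claim of the lemma then follows by sending first $R \to \infty$, then $\eta \to 0$, then $T' \to T$.

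By Lemma~\ref{lem:LenPres} the trace $\tr(\sind)$ is bounded below by a positive constant, whereas $\phi_R \to \infty$ at spatial infinity and $b$ is bounded on $[0,T']$, so $\upgamma \to +\infty$ uniformly in $t \in [0,T']$ as $\|x\| \to \infty$. If $\upgamma$ were to become negative anywhere, one could therefore find a first time $t_0 \in (0,T']$ and a point $x_0 \in \Rb^m$ at which $\upgamma(x_0,t_0)=0$, $\nabla\upgamma(x_0,t_0)=0$, and $(\partial_t - \Delta)\upgamma(x_0,t_0) \leq 0$. Writing $\rho := \ee^{\eta t_0}\phi_R(x_0) > 1$ and $y := \tr(\sind)(x_0,t_0) = b(t_0)/\rho$, I would then invoke the evolution inequality of Lemma~\ref{lem:gammaEvolEq}, whose ``main'' terms read
\[
\ee^{\eta t_0}\phi_R \tfrac{\sigma}{2}(m-y)(m-1-y) - b'(t_0) = \tfrac{\sigma}{2}\bigl[\rho(m-y)(m-1-y) - (m-b)(m-1-b)\bigr].
\]
The algebraic crux is that $\rho \mapsto \frac{(m\rho-b)((m-1)\rho-b)}{\rho}$ has $\rho$-derivative $m(m-1) - b^2/\rho^2$, which is strictly positive for $\rho \geq 1$ because $b < m-1$ and $m > 1$; this forces the bracket above to be nonnegative.

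It remains to control the error terms $\ee^{\eta t_0}\bigl[\eta\phi_R\tr(\sind) - 2\<\nabla\phi_R,\nabla\tr(\sind)\> - (\Delta\phi_R)\tr(\sind)\bigr]$ at $(x_0,t_0)$. The first contribution equals $\eta b(t_0) \geq \eta b(0) > 0$; Lemma~\ref{lem:trDerivEst}, the bound on $|\Delta\phi_R|$, and the elementary estimate $\frac{\|x_0\|/R^2}{\phi_R(x_0)} \leq 1/R$ bound the remaining two in absolute value by $Cb(t_0)/R$ for some constant $C = C(T')$. Choosing $R \geq 2C/\eta$ then yields $(\partial_t - \Delta)\upgamma(x_0,t_0) \geq \tfrac{\eta}{2}b(t_0) > 0$, contradicting the maximum principle; hence $\upgamma \geq 0$, and the stated estimate follows after passing to the limit. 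I expect the main obstacle to be precisely the algebraic comparison above: the evolution of $\tr(\sind)$ produces $(m-\tr(\sind))(m-1-\tr(\sind))$ while the ODE for $b$ produces the same quadratic expression evaluated at $b$, and one must verify that the rescaling $\tr(\sind) \leftrightarrow b/\rho$ forced by the definition of $\upgamma$ is compensated in the correct direction by the prefactor $\phi_R$ in the parabolic term, which is exactly what the sign of $m(m-1) - b^2/\rho^2$ delivers.
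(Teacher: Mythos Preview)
Your proof is correct and follows essentially the same approach as the paper: both apply a localized maximum principle to $\upgamma$ using the evolution inequality of Lemma~\ref{lem:gammaEvolEq}, split the right-hand side into the main reaction term and the $\phi_R$-error terms, and derive a contradiction at the first zero. Your handling of the main term via the monotonicity of $\rho \mapsto (m\rho-b)((m-1)\rho-b)/\rho$ and your use of the constraint $\ee^{\eta t_0}\phi_R\tr(\sind)=b(t_0)$ to reduce the error terms to $O(b(t_0)/R)$ are slightly slicker than the paper's two-step bounding of $\Ac$ and quadratic-in-$\|x_0\|$ absorption for $\Bc$, but the substance is identical.
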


\begin{proof}
	We will show that for any fixed $T'\in[0,T)$ and $\eta>0$, there is $R_0>0$ depending only on $\eta$ and $T'$, such that
	$\upgamma>0$ on $\Rb^m\times[0,T']$ for all $R\ge R_0$.
	
	On the contrary, suppose $\upgamma$ is not positive on $\Rb^m\times[0,T']$ for some $R\ge R_0$. Then
	as $\upgamma\ge 0$ on $\Rb^m\times\{0\}$,
	$\tr(\sind)\ge m\varepsilon$ on $\Rb^m\times[0,T)$ and $\phi_R(x)\to\infty$
	as $\|x\|\to\infty$, it follows that $\upgamma>0$ outside some compact set $K\subset\Rb^m$ for
	all $t\in[0,T)$. We conclude that there is $(x_0,t_0)\in K\times[0,T']$ such that $\upgamma(x_0,t_0)=0$
	and that $t_0$ is the first such time. According to the second derivative criterion, at the point
	$(x_0,t_0)$ we have
	\begin{equation}
		\label{eq:gammaSDC}
		\upgamma = 0 \,, \qquad \partial_t \upgamma \le 0\,, \qquad \nabla \upgamma = 0 \qquad \text{and} \qquad \Delta \upgamma \ge 0 \,.
	\end{equation}
	On the other hand, using lemma \ref{lem:gammaEvolEq}, we estimate
	the terms in the evolution equation for $\gamma$,
	\begin{align*}
		(\partial_t - \Delta) \upgamma &\ge \Ac + \Bc \,,
	\end{align*}
	where
	\begin{align*}
		\Ac &\coloneqq \ee^{\eta t}\phi_R \frac{\sigma}{2} \bigl(m-\tr(\sind)\bigr) \bigl(m-1-\tr(\sind)\bigr) - \frac{\sigma}{2} \frac{c_1 \exp\left(\frac{\sigma}{2} t\right)}{\left( c_1 \exp\left(\frac{\sigma}{2}t\right)-1\right)^2}  \,, \\
		\Bc &\coloneqq \ee^{\eta t}\Bigl\{ \eta \phi_R \tr(\sind) - 2 \< \nabla \phi_R, \nabla \tr(\sind) \> - (\Delta \phi_R)\tr(\sind) \Bigr\} \,.
	\end{align*}
	Further, using Eq.\ \eqref{eq:gammaSDC}, at $(x_0,t_0)$ we obtain
	\[
		\ee^{\eta t_0} \phi_R \tr(\sind) = \frac{c_1 (m-1) \exp\left(\frac{\sigma}{2} t_0\right)-m}{c_1 \exp\left(\frac{\sigma}{2}t_0\right)-1} \,.
	\]
	Since $c_1>1$, 
	the right-hand side is monotonically increasing in $t$ for $t\ge 0$ and
	bounded from above by $m-1$, so that
	\[
		\tr(\sind) \le \ee^{\eta t_0} \phi_R \tr(\sind) = \frac{c_1 (m-1) \exp\left(\frac{\sigma}{2} t_0\right)-m}{c_1 \exp\left(\frac{\sigma}{2}t_0\right)-1} \le m-1 \,.
	\]
	Consequently, we estimate
	\begin{align*}
		\Ac &\ge \frac{\sigma}{2}\left( m - \frac{c_1(m-1)\exp\left(\frac{\sigma}{2}t_0\right)-m}{c_1\exp\left(\frac{\sigma}{2}t_0\right)-1} \right)\bigl(m-1-\tr(\sind)\bigr) \\
			& \qquad\qquad\qquad\qquad\qquad\qquad\qquad\qquad\qquad - \frac{\sigma}{2} \frac{c_1\exp\left(\frac{\sigma}{2}t_0\right)}{\left(c_1\exp\left(\frac{\sigma}{2}t_0\right)-1\right)^2} \\
			&\ge \frac{\sigma}{2} \frac{c_1 \exp\left(\frac{\sigma}{2}t_0\right)}{c_1\exp\left(\frac{\sigma}{2}t_0\right)-1}\left(m-1-\frac{c_1(m-1)\exp\left(\frac{\sigma}{2}t_0\right)-m}{c_1\exp\left(\frac{\sigma}{2}t_0\right)-1}  \right) \\
			& \qquad\qquad\qquad\qquad\qquad\qquad\qquad\qquad\qquad - \frac{\sigma}{2} \frac{c_1\exp\left(\frac{\sigma}{2}t_0\right)}{\left(c_1\exp\left(\frac{\sigma}{2}t_0\right)-1\right)^2} \\
			&= \frac{\sigma}{2}  \frac{c_1 \exp\left(\frac{\sigma}{2}t_0\right)}{c_1\exp\left(\frac{\sigma}{2}t_0\right)-1}  \frac{1}{c_1\exp\left(\frac{\sigma}{2}t_0\right)-1} - \frac{\sigma}{2} \frac{c_1\exp\left(\frac{\sigma}{2}t_0\right)}{\left(c_1\exp\left(\frac{\sigma}{2}t_0\right)-1\right)^2} \\
			&= 0 \,.
	\end{align*}
	For the terms in $\Bc$, we use lemma \ref{lem:trDerivEst} to calculate
	\[
		\Bc \ge \ee^{\eta t_0} \left\{ \eta + \eta\frac{\|x_0\|^2_{\Rb^m}}{R^2} - 3 c(T') \frac{\|x_0\|_{\Rb^m}}{R^2} - \frac{c(T')}{R^2} \right\} \tr(\sind) \,.
	\]
	Now choosing $R_0>0$ (depending on $\eta$ and $T'$) large enough, the term
	\[
		\frac{\eta}{2} + \eta \frac{\|x_0\|^2}{R^2} - 3 c(T') \frac{\|x_0\|_{\Rb^m}}{R^2} - \frac{c(T')}{R^2}
	\]
	is strictly positive for any $R\ge R_0$ and any $\|x_0\|_{\Rb^m }$. This yields
	\[
		(\partial_t - \Delta) \upgamma(x_0,t_0) = \Ac + \Bc > \ee^{\eta t_0} \frac{\eta}{2} \tr(\sind) \ge \ee^{\eta t_0} \frac{\eta}{2} m\varepsilon > 0 \,,
	\]
	which contradicts \eqref{eq:gammaSDC} and thus shows the claim.

	The statement of the lemma follows by first letting $R\to\infty$, then $\eta\to 0$ and finally $T'\to T$.
\end{proof}

\begin{remark}
	We observe that the estimate in lemma \ref{lem:trEst} holds for any $\sigma\ge 0$, i.\,e.\
	\[
		(\partial_t - \Delta) \tr(\sind) \ge 0 \,.
	\]
	Arguing in the same way as in the proof of lemma \ref{lem:trTimeEst}, we obtain that
	$\tr(\sind)$ is non-decreasing in time.
\end{remark}

%
%
\section{Higher-Order Estimates}

In this section we prove decay estimates for derivatives
of order $n\ge 2$ of the function $f_t$ defining the graph. 
We begin by recalling some definitions, where we follow
\cite[Section 5]{SHS16} (see also \cite{CH10}).

\begin{definition}[{$C^{\infty}$-convergence}]
	Let $(E,\pi,M)$ be a vector bundle endowed with a Riemannian metric $\gind$ and a metric
	connection $\nabla$ and suppose that $\{\xi_k\}_{k\in\Nb}$ is a sequence of sections of $E$.
	Let $U$ be an open subset of $M$ with compact closure $\overline{U}$ in $M$. Fix a natural number
	$p\ge 0$. We say that $\{\xi_k\}_{k\in\Nb}$ \emph{converges in $C^p$} to $\xi_{\infty}\in \Gamma(E|_{\overline{U}})$,
	if for every $\varepsilon>0$ there exists $k_0=k_0(\varepsilon)$, such that
	\[
		\sup_{0\le\alpha\le p} \sup_{x\in\overline{U}} \left| \nabla^{\alpha}(\xi_k - \xi_{\infty}) \right| < \varepsilon
	\]
	whenever $k\ge k_0$. We say that $\{\xi_k\}_{k\in\Nb}$ converges in $C^{\infty}$ to $\xi_{\infty} \in \Gamma(E|_{\overline{U}})$
	if $\{\xi_k\}_{k\in\Nb}$ converges in $C^p$ to $\xi_{\infty}\in\Gamma(E|_{\overline{U}})$ for any $p\ge 0$.
\end{definition}

\begin{definition}[{$C^{\infty}$-convergence on compact sets}]
	Let $(E,\pi,M)$ be a vector bundle endowed with a Riemannian metric $\gind$ and a metric
	connection $\nabla$. Let $\{U_n\}_{n\in\Nb}$ be an exhaustion of $M$ and $\{\xi_k\}_{k\in\Nb}$
	be a sequence of sections of $E$ defined on open sets $A_k$ of $M$. We say that $\{\xi_k\}_{k\in\Nb}$
	\emph{converges smoothly on compact sets} to $\xi_{\infty}\in\Gamma(E)$ if:
	\begin{enumerate}[label=(\roman*)]
		\item For every $n\in\Nb$ there exists $k_0$ such that $\overline{U}_n\subset A_k$
			for all natural numbers $k\ge k_0$.
		\item The sequence $\{\xi|_{\overline{U}}\}_{k\ge k_0}$ converges in $C^{\infty}$ to the restriction
			of the section $\xi_{\infty}$ on $\overline{U}_n$.
	\end{enumerate}
\end{definition}

\begin{definition}[Pointed Manifold]
	A \emph{pointed Riemannian manifold} $(M,\gind,x)$ is a Riemannian manifold
	$(M,\gind)$ with a choice of base point $x\in M$. If the metric $\gind$ is
	complete, we say that $(M,\gind,x)$ is a \emph{complete pointed Riemannian manifold}.
\end{definition}

\begin{definition}[Cheeger-Gromov smooth convergence]
	A sequence of complete pointed Riemannian manifolds $\{(M_k,\gind_k,x_k)\}_{k\in\Nb}$
	smoothly converges in the sense of Cheeger-Gromov to a complete pointed Riemannian
	manifold $(M_{\infty},\gind_{\infty},x_{\infty})$, if there exists:
	\begin{enumerate}[label=(\roman*)]
		\item An exhaustion $\{U_k\}_{k\in\Nb}$ of $M_{\infty}$ with $x_{\infty}\in U_k$, for all $k\in\Nb$.
		\item A sequence of diffeomorphisms $\Phi_k:U_k\to\Phi_k(U_k)\subset M_k$ with $\Phi_k(x_{\infty})=x_k$
		and such that $\{\Phi_k^*\gind_k\}_{k\in\Nb}$ smoothly converges in $C^{\infty}$ to $\gind_{\infty}$
		on compact sets in $M_{\infty}$.
	\end{enumerate}
	The family $\{(U_k,\Phi_k)\}_{k\in\Nb}$ is called a \emph{family of convergence pairs}
	of the sequence $\{(M_k,\gind_k,x_k)\}_{k\in\Nb}$ with respect to the limit $(M_{\infty},\gind_{\infty},x_{\infty})$.
\end{definition}

In the sequel, when we say \emph{smooth convergence}, we will always mean smooth convergence
in the sense of Cheeger-Gromov. \\

The family of convergence pairs is not unique. However, two families of convergence pairs 
$\{(U_k,\Phi_k)\}_{k\in\Nb}$ and
$\{(W_k,\Psi_k)\}_{k\in\Nb}$ are equivalent in the sense that there exists an isometry
$\mathcal{I}$ of the limit $(M_{\infty},\gind_{\infty},x_{\infty})$ such that,
for every compact subset $K$ of $M_{\infty}$ there exists a natural number $k_0$ such that
for any $k\ge k_0$:
\begin{enumerate}[label=(\roman*)]
	\item the mapping $\Phi_k^{-1}\circ \Psi_k$ is well-defined over $K$ and 
	\item the sequence $\{\Phi_k^{-1}\circ \Psi_k\}_{k\ge k_0}$ smoothly converges to $\mathcal{I}$ on $K$.
\end{enumerate}
The limiting pointed Riemannian manifold $(M_{\infty}, \gind_{\infty}, x_{\infty})$
is unique up to isometries \cite[Lemma 5.5]{MT07}. \\

The following proposition is quite standard.
\begin{proposition}
	Let $(M,\gind)$ be a complete Riemannian manifold with bounded geometry. Suppose that $\{a_k\}_{k\in\Nb}$
	is an increasing sequence of real numbers that tends to $\infty$ and let $\{x_k\}_{k\in\Nb}$
	be a sequence of points on $M$. Then the sequence $(M,a_k^2\gind, x_k)$ smoothly subconverges
	to the standard Euclidean space $(\Rb^m,\gind_{\mathrm{eucl.}},0)$.
\end{proposition}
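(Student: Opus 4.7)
The plan is to apply the Cheeger--Gromov compactness theorem for pointed Riemannian manifolds, after verifying that the rescaled sequence satisfies the uniform curvature and injectivity radius bounds required. The key observation is that rescaling the metric by $a_k^2$ with $a_k\to\infty$ behaves like a zoom-in: curvature and its covariant derivatives decay, while distances and the injectivity radius dilate.

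First, I would fix the bounded geometry hypothesis: there are constants $C_j$ with $\sup_{x\in M}\|\nabla^j \Rm_{\gind}(x)\|_{\gind} \le C_j$ for all $j\ge 0$, and $\mathrm{inj}(M,\gind)\ge \iota_0>0$. Setting $\tgind_k \coloneqq a_k^2 \gind$, the standard scaling identities give $\|\nabla^j \Rm_{\tgind_k}\|_{\tgind_k} = a_k^{-(2+j)} \|\nabla^j \Rm_{\gind}\|_{\gind}$, so every covariant derivative of the curvature tensor (measured in $\tgind_k$) tends to zero uniformly on $M$ as $k\to\infty$. Simultaneously, $\mathrm{inj}(M,\tgind_k,x_k) = a_k \cdot \mathrm{inj}(M,\gind,x_k) \ge a_k \iota_0 \to \infty$. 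In particular, the sequence $\{(M,\tgind_k,x_k)\}$ satisfies the hypotheses of the Cheeger--Gromov compactness theorem with constants that are uniform in $k$, and thus admits a subsequence converging smoothly in the sense of Cheeger--Gromov to some complete pointed Riemannian manifold $(M_\infty, \gind_\infty, x_\infty)$.

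Next, I would identify the limit. Since the curvature tensor and all its covariant derivatives of $\tgind_k$ go to zero uniformly, the limit $\gind_\infty$ is flat. Moreover, smooth convergence on compact sets, together with $\mathrm{inj}(M,\tgind_k,x_k)\to\infty$, forces $\mathrm{inj}(M_\infty,\gind_\infty,x_\infty) = \infty$. A simply connected, complete, flat Riemannian manifold with infinite injectivity radius at one point is isometric to Euclidean space $(\Rb^m,\gind_{\mathrm{eucl.}})$, so sending $x_\infty$ to the origin yields the asserted limit.

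A slightly more hands-on realization of the convergence pairs would use the exponential maps: for any $r>0$, the bounds above guarantee that $\exp_{x_k}^{\tgind_k}$ is a diffeomorphism on the ball $B_r(0) \subset T_{x_k}M$ for all sufficiently large $k$, and in $\gind$-orthonormal coordinates (rescaled by $a_k$) the pullback metric $(\exp_{x_k}^{\tgind_k})^*\tgind_k$ converges in $C^\infty_{\mathrm{loc}}$ to the flat metric, by the standard expansion of a metric in normal coordinates together with the decay of curvature. The main obstacle is essentially bookkeeping: extracting the subsequence correctly and verifying the $C^\infty_{\mathrm{loc}}$ estimates for the pullback metrics, but this is entirely standard once the two scaling facts above (curvature decay and injectivity radius growth) are in place.
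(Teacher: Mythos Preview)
Your argument is correct and is exactly the standard one: scale to kill curvature and dilate the injectivity radius, apply Cheeger--Gromov compactness, then identify the flat, simply connected limit with $\Rb^m$. The paper does not actually supply a proof of this proposition; it merely labels it ``quite standard'' and moves on, so there is nothing further to compare.
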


\begin{definition}
	We say the a sequence $\{(M_k,\gind_k,x_k)\}_{k\in\Nb}$ of complete pointed
	Riemannian manifolds has uniformly bounded geometry if the following conditions are
	satisfied:
	\begin{enumerate}[label=(\roman*)]
		\item For any $j\ge 0$ there exists a uniform constant $C_j\ge 0$, such that for each
			$k\in\Nb$ it holds $\|\nabla^j \Rind_{M_k}\| \le C_j$.
		\item There exists a uniform constant $c_0$ such that $\mathrm{inj}(M_k)\ge c_0>0$.
	\end{enumerate}
\end{definition}

\begin{theorem}[Cheeger-Gromov compactness]
	Let $\{(M_k,\gind_k,x_k)\}_{k\in\Nb}$ be a sequence of complete pointed Riemannian manifolds
	with uniformly bounded geometry. Then the sequence $\{(M_k,\gind_k,x_k)\}_{k\in\Nb}$
	subconverges smoothly to a complete pointed Riemannian manifold $(M_{\infty},\gind_{\infty},x_{\infty})$.
\end{theorem}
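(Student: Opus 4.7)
The plan is to prove this via the classical Cheeger--Gromov--Hamilton argument: construct the limit pointed manifold as a colimit of local chart limits obtained from exponential coordinates, and extract the subsequence by a diagonal Arzel\`a--Ascoli procedure.

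First, I would exploit the uniform injectivity radius bound $\mathrm{inj}(M_k)\ge c_0$ to use exponential coordinates. Fix an $\gind_k$-orthonormal frame at $x_k$ and identify $T_{x_k}M_k$ with $\Rb^m$; the map $\exp_{x_k}^{M_k}$ then becomes a smooth diffeomorphism from the Euclidean ball $B_{c_0}(0)\subset\Rb^m$ onto an open neighborhood of $x_k$ in $M_k$. Pulling back $\gind_k$ along this identification yields a sequence of smooth metrics $g_k$ on the \emph{fixed} ball $B_{c_0}(0)$ satisfying $g_k(0)=\gind_{\mathrm{eucl.}}$ and $\Phi_k(x_\infty)=x_k$ tautologically.

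Next, I would show that the curvature bounds $\|\nabla^j \Rind_{M_k}\|\le C_j$ convert into uniform $C^{j}$-bounds for the components of $g_k$ on $B_{c_0/2}(0)$, together with uniform positivity: this is the classical local regularity statement in normal coordinates, where the radial equation for the metric (a Jacobi-type system) and standard Christoffel identities express $\partial^\alpha g_k$ inductively in terms of covariant derivatives of the curvature tensor. By Arzel\`a--Ascoli, after passing to a subsequence, $g_k\to g_\infty$ in $C^\infty$ on $B_{c_0/2}(0)$, and $g_\infty$ is a smooth Riemannian metric.

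To globalize, I would iterate this construction along an exhaustion. Pick radii $R_n\to\infty$ and, for each $n$ and $k$, cover $B_{R_n}(x_k)\subset M_k$ by a controlled number of exponential charts centered at a $c_0/4$-separated net. The cardinality of such nets is bounded uniformly in $k$ by Bishop--Gromov volume comparison, using the uniform lower bound on $\mathrm{Ric}_{M_k}$ coming from $\|\Rind_{M_k}\|\le C_0$. Applying the local Arzel\`a--Ascoli step to each chart and performing a diagonal subsequence extraction in $n$ produces an exhaustion $\{U_n\}$ of a limit manifold $M_\infty$ together with diffeomorphisms $\Phi_k:U_n\to M_k$ with $\Phi_k(x_\infty)=x_k$ and $\Phi_k^*\gind_k\to\gind_\infty$ in $C^\infty$ on compact sets. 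Completeness of $\gind_\infty$ follows because distances pull back to distances on $M_k$, whose balls $B_{R_n}(x_k)$ are precompact.

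The main obstacle I expect is the gluing/consistency step. One must verify that the transition maps between overlapping exponential charts on $M_k$ themselves converge (along a further subsequence) to well-defined transition maps on the limit space, so that the local chart limits assemble into a single smooth manifold $M_\infty$ rather than into a disjoint union. The key observation that makes this work is that these transitions are determined by the metrics $g_k$ and the exponential maps, both of which are controlled by the uniform bounded geometry; combined with repeated passage to subsequences and the uniqueness of the limit up to isometry (as in \cite[Lemma 5.5]{MT07}), this produces the desired family of convergence pairs $\{(U_k,\Phi_k)\}_{k\in\Nb}$.
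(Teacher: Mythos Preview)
The paper does not prove this theorem at all: it is stated without proof as a classical background result in the section on higher-order estimates, alongside the other Cheeger--Gromov formalism (definitions of $C^\infty$-convergence, pointed manifolds, convergence pairs, etc.), and is then used as a black box in the blow-up arguments of Lemmas \ref{lem:HOEst1} and \ref{lem:HOBd}. So there is no ``paper's own proof'' to compare against.

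That said, your outline is a reasonable sketch of the standard Cheeger--Gromov--Hamilton argument, and would be an acceptable way to fill in the gap if a proof were required. Two comments. First, the step where you pass from uniform bounds on $\|\nabla^j\Rind_{M_k}\|$ to uniform $C^j$-bounds on the metric coefficients in \emph{normal} coordinates is genuinely delicate: with only a curvature bound one typically gets no better than $C^{1,\alpha}$ control of $g_k$ in normal coordinates, and the usual route to higher regularity goes through harmonic coordinates (DeTurck--Kazdan, Anderson). Under the present hypotheses, however, bounds on \emph{all} covariant derivatives of curvature are available, and then the Jacobi-equation approach you allude to does yield the full $C^\infty$ control in geodesic coordinates; you should be explicit that this is where the full strength of the uniformly bounded geometry assumption enters. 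Second, the gluing step you flag as the main obstacle is handled in the literature exactly as you suggest (transition maps have uniformly bounded derivatives because they are compositions of exponential maps controlled by the metrics $g_k$), so a reference to Hamilton's original compactness paper or to \cite{MT07} would suffice here rather than a full argument.
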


\begin{definition}[Convergence of isometric immersions]
	Let $F_k:(M_k,\gind_k,x_k) \to (L_k,\h_k,y_k)$ be a sequence of isometric
	immersions, such that $F_k(x_k)=y_k$ for any $k\in\Nb$.
	We say
	that the sequence $\{F_k\}_{k\in\Nb}$ converges smoothly to an isometric immersion
	$F_{\infty}:(M_{\infty},\gind_{\infty},x_{\infty})\to(L_{\infty},\h_{\infty},y_{\infty})$ if the
	following conditions are satisfied:
	\begin{enumerate}[label=(\roman*)]
		\item The sequence $\{(M_k,\gind_k,x_k)\}_{k\in\Nb}$ smoothly converges to $(M_{\infty},\gind_{\infty},x_{\infty})$.
		\item The sequence $\{(L_k,\h_k,y_k)\}_{k\in\Nb}$ smoothly converges to $(L_{\infty},\h_{\infty}, y_{\infty})$.
		\item If $\{(U_k,\Phi_k)\}_{k\in\Nb}$ is a family of convergence pairs of 
			$\{(M_k,\gind_k,x_k)\}_{k\in\Nb}$ and $\{(W_k,\Psi_k)\}_{k\in\Nb}$ is a family
			of convergence pairs of $\{(L_k,\h_k,y_k)\}_{k\in\Nb}$, then
			for each $k\in\Nb$ the relation $F_k\circ\Phi_k(U_k)\subset\Psi_k(W_k)$ holds and
			$\Psi^{-1}_k\circ F\circ \Phi_k$ smoothly converges to $F_{\infty}$ on compact sets.
	\end{enumerate}
\end{definition}

Assume that $(M,\gM)$ and $(N,\gN)$ are complete manifolds with bounded geometry and 
that $F:M\times[0,T)\to N$ is a solution of the mean curvature flow with bounded geometry.
For any $\tau>0$ and $(x_0,t_0)\in M\times[0,T)$, let us define the 
\emph{parabolic scaling by $\tau$ at $(x_0,t_0)$} by letting
\begin{align*}
	F^{\tau}_t : (M,x_0) \to \bigl( N, \tau^2 \gN, F^{\tau}_t(x_0) \bigr) \,, \qquad F^{\tau}_t(x) \coloneqq F_{t_0+t/\tau^2}\bigl( x \bigr) \,.
\end{align*}

\begin{remark}
	The parabolic scaling preserves the graph property. More precisely,
	assume that the family of immersions $F:M\times[0,T)\to M\times N$
	are graphs with induced metric metric $\gind = F^*\gMN$. Then there is
	a family of diffeomorphisms $\varphi:M\times[0,T)\to M$ and a function $f_t:M\to N$, 
	such that $F_t\circ\varphi_t(x) = \bigl( x, f_t(x) \bigr)$.
	Let the parabolic scaling by $\tau$ at $(x_0,t_0)$
	be given by $F^{\tau}_t(x) = F_{t_0+t/\tau^2}(x)$, and let $\varphi^{\tau}_t(x) \coloneqq \varphi_{t_0+t/\tau^2}(x)$.
	We calculate
	\[
		F^{\tau}_{t}\circ\varphi^{\tau}_t(x) = F_{t_0+t/\tau^2}\bigl( \varphi_{t_0+t/\tau^2}(x) \bigr) = \bigl( x, f_{t_0+t/\tau^2}(x) \bigr) \,.
	\]
	Furthermore, the metric induced on the graph is given by
	\[
			\gind_{\tau} \coloneqq (F_t^{\tau})^*\gMN = \tau^2 \gind \,.
	\]
	We note that a length-decreasing property of $f_t$ is preserved under the scaling.
\end{remark}

\begin{lemma}
	\label{lem:HOEst1}
	Let $F:\Rb^m\times[0,T)\to\Rb^m\times N$ be a smooth, graphic solution
	to \eqref{eq:MCF} with bounded geometry. Suppose the corresponding maps
	$f_t:\Rb^m\to N$ satisfy $\|\dd f_t\|^2\le C_1$ and $\|\nabla \dd f_t\|^2\le C_2$
	on $\Rb^m\times[0,T)$ for some constants $C_1,C_2\ge 0$. Then for every $l\ge 3$
	there exists a constant $C_l$, such that
	\[
		\sup_{x\in\Rb^m} \|\nabla^{l-1} \dd f_t \|^2 \le C_l
	\]
	for all $t\in[0,T)$.
\end{lemma}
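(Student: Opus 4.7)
The strategy is a proof by contradiction via parabolic rescaling together with the Cheeger--Gromov compactness theorem recalled in this section, in the spirit of Hamilton's blow-up arguments for geometric flows. The argument proceeds by induction on $l\ge 3$: the inductive hypothesis is that uniform spatial bounds on $\|\nabla^{j-1}\dd f_t\|$ have already been established for $2\le j\le l-1$, where the case $j=2$ is the standing assumption $\|\nabla\dd f_t\|^2\le C_2$.

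Suppose the bound fails at stage $l$. Then there exist sequences $(x_k,t_k)\in\Rb^m\times[0,T)$ with $M_k\coloneqq \|\nabla^{l-1}\dd f_{t_k}(x_k)\|\to\infty$. A standard point-picking argument on the parabolic cylinder lets us assume that $M_k$ is almost-maximal on a parabolic neighborhood whose radius after rescaling tends to infinity. Setting $\tau_k\coloneqq M_k^{1/(l-1)}\to\infty$ and applying the parabolic scaling defined before the lemma at $(x_k,t_k)$ produces graphic mean curvature flows $\widetilde{F}_k$ defined via maps $\widetilde{f}_k$. The scaling law $\|\nabla^{j-1}\dd\widetilde{f}_k(0,0)\|=\tau_k^{-(j-1)}\|\nabla^{j-1}\dd f_{t_k}(x_k)\|$, together with the inductive bounds, gives $\|\nabla^{l-1}\dd\widetilde{f}_k(0,0)\|=1$ by construction, and
\[
	\|\dd\widetilde{f}_k(0,0)\|^2\le C_1,\qquad \|\nabla\dd\widetilde{f}_k(0,0)\|^2\le \tau_k^{-2}C_2\to 0,
\]
with analogous decay for all intermediate orders. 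Point-picking extends these uniform pointwise bounds to an expanding parabolic neighborhood of the origin.

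Combined with the bounded geometry of $F$ and $N$ and the fact that rescaling by $\tau_k\ge 1$ only decreases ambient curvature norms and enlarges injectivity radii, the sequence $\{\widetilde{F}_k\}$ satisfies the uniformly bounded geometry hypotheses of the Cheeger--Gromov compactness theorem recalled above, in both its Riemannian and its isometric-immersion versions. Passing to a subsequence yields smooth convergence to a limit graphic mean curvature flow $\widetilde{F}_\infty$ defined by a map $\widetilde{f}_\infty$ into a limit ambient space. By smooth convergence, $\nabla\dd\widetilde{f}_\infty\equiv 0$, so $\dd\widetilde{f}_\infty$ is parallel along the graph and every covariant derivative of $\dd\widetilde{f}_\infty$ of order $\ge 1$ vanishes identically. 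In particular $\|\nabla^{l-1}\dd\widetilde{f}_\infty(0,0)\|=0$, contradicting the normalization $\|\nabla^{l-1}\dd\widetilde{f}_k(0,0)\|=1$ preserved in the smooth limit.

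The principal obstacle is translating the inductive $C^{l-1}$-control on $\dd f_t$ and the bounded geometry of $F$ into uniform $C^k$-bounds \emph{of all orders} on the rescaled second fundamental forms on fixed parabolic neighborhoods, as required by Cheeger--Gromov to produce \emph{smooth} convergence. This will combine a parabolic regularity bootstrap for the graphical system \eqref{eq:pMCF} on the rescaled flows with repeated use of the Gauss equation \eqref{eq:Gauss} to relate covariant derivatives of $\dd f_t$ to covariant derivatives of $\A$, and must be carried out carefully in the higher-codimension setting in which the target $N$ is a varying Riemannian manifold rather than Euclidean space.
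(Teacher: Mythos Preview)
Your proposal is essentially correct and uses the same blow-up/Cheeger--Gromov contradiction as the paper: after parabolic rescaling, $\|\nabla\dd\widetilde f_k\|\to 0$ forces $\nabla\dd\widetilde f_\infty\equiv 0$, which is incompatible with the normalized top-order derivative surviving in the limit.

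The organizational difference is that the paper does not induct on $l$ with a separate blow-up at each stage. Instead it observes at the outset that once $\|\nabla^2\dd f_t\|$ is uniformly bounded, a direct parabolic bootstrap on the quasilinear system \eqref{eq:pMCF} already yields all higher-order bounds, so it suffices to run the blow-up argument \emph{once}, for $l=3$. There the rescaling factor is $\tau_k=\mu_k^{1/4}$ with $2\mu_k=\sup_{x,\,t\le t_k}\|\nabla^2\dd f_t(x)\|^2$; the choice of $t_k$ as the time at which the running supremum is attained gives the global bound $\|\nabla^2\dd\widetilde f_k\|^2\le 2$ without a point-picking step, and the contradiction is simply $\|\nabla\dd\widetilde f_\infty\|=0$ versus $\|\nabla^2\dd\widetilde f_\infty(x_\infty,0)\|\ge 1$. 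Your inductive scheme is valid, but it invokes the compactness/bootstrap machinery at every $l$, whereas the paper front-loads the bootstrap to isolate a single, simpler blow-up. The technical obstacle you flag in your last paragraph---upgrading $C^l$ control to $C^\infty$ control on the rescaled flows---is exactly the parabolic bootstrap the paper uses, just placed differently.
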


\begin{proof}
	If $\|\nabla^2 \dd f_t\|^2\le C_3$ in $[0,T)$, then a parabolic bootstrapping
	argument for the quasilinear Eq.\ \eqref{eq:pMCF} gives $\|\nabla^{l-1} \dd f_t\|^2\le C_l$ for $l\ge 2$.
	It is therefore sufficient to prove the claim for $l=3$.
	
	Suppose $\|\nabla^2 \dd f_t\|^2$ was not bounded on $\Rb^m\times[0,T)$. By the bounded
	geometry assumption on $F$ (resp.\ on $f$), there would be a sequence $t_k\to T$, such that
	\[
		2 \mu_k \coloneqq \sup_{x\in\Rb^m} \| \nabla^2 \dd f_{t_k}(x) \|^2 \to \infty \quad \text{and} \quad \sup_{\substack{x\in\Rb^m\\t\le t_k}} \|\nabla^2 \dd f_t(x)\|^2 \le 2\mu_k < \infty\,.
	\]
	This implies there is a sequence $\{x_k\}$, such that $\|\nabla^2 \dd f_{t_k}(x_k)\|^2 \ge \mu_k \to \infty$
	for $t_k\to T$. Set $\tau_k \coloneqq \mu_k^{1/4}$ and consider the sequence
	\[
		F^{\tau_k}_k : \bigl( \Rb^m, F^{\tau_k\,*}_k(\tau_k^2\RgMN), x_k \bigr) \to \bigl( \Rb^m\times N, \tau_k^2 \RgMN, F^{\tau_k}_k(x_k) \bigr)
	\]
	of immersions of pointed Riemannian manifolds, where
	\[
		F^{\tau_k}_k(x,\tScaled) \coloneqq F\left( x, \frac{\tScaled}{\tau_k^2} + t_k \right) \,.
	\]
	Then $\{F^{\tau_k}_k\}$ is a sequence of mean curvature flows for $\tScaled\in[-\tau_k^2t_k,0]$.
	
	Since $N$ has bounded geometry, $\bigl( \Rb^m\times N, \tau_k^2 \RgMN, F^{\tau_k}_k(x_k) \bigr)$
	converges on every compact subset
	to the Euclidean space $\bigl(\Rb^{m+n},\gind_{\Rb^m\times\Rb^n},0)$ with its standard metric.
	The sequence $\bigl(\Rb^m,F^{\tau_k\,*}_k(\tau_k^2\RgMN),x_k\bigr)$ converges
	smoothly to a geometric limit $(M_{\infty}, \gind_{\infty}, x_{\infty})$ in the Cheeger-Gromov sense,
	and every manifold of the sequence 
	is the graph of a function $\fScaled{k}$. In particular, the	
	limiting manifold $M_{\infty}$ is the graph of a function $\fScaled{\infty}:\Rb^m\to \Rb^n$.
	Since $M_{\infty}$ satisfies the mean curvature flow equation in $\Rb^{m+n}$, 
	the function $\fScaled{\infty}$ satisfies the quasilinear parabolic equation
	\[
		\partial_t f^{\infty}_t(x) = \sum_{i,j=1}^m \gind^{ij}_{\infty} \frac{\partial^2 f^{\infty}}{\partial x^i \partial x^j} \,.
	\]
	Using the assumptions $\|\dd f_t\|^2 \le C_1$ and $\| \nabla \dd f_t\|^2\le C_2$
	in $\Rb^m\times[-\tau_k^2t_k,0]$, we calculate 
	\begin{align*}
		\| \dd \fScaled{\tau_k}(x,\tScaled) \|^2_{\tau_k} &= \| \dd f(x,t) \|^2 \le C_1 \,, \\
		\| \nabla \dd \fScaled{\tau_k}(x,\tScaled) \|^2_{\tau_k} &= \frac{\| \nabla \dd f(x,t)\|^2}{\tau_k^2} \le \frac{C_2}{\tau_k^2} \quad \xrightarrow{k\to\infty} \quad 0 \,,
	\end{align*}
	where $\|\cdot\|_{\tau_k}$ denotes the norms with respect to the rescaled metrics $\tau_k^2\gN$
	and $\tgind_{\tau_k}$.
	Using the definition of $\tau_k$ and the definition of the sequence $(x_k,t_k)$, we obtain
	\[
		\bigl\| \nabla^2 \dd \fScaled{\tau_k}(x,\tScaled) \bigr\|^2_{\tau_k} = \frac{\| \nabla^2 \dd f(x,t) \|^2}{\tau_k^4} = \frac{\| \nabla^2 \dd f(x,t) \|^2}{\mu_k} \le 2
	\]
	and
	\[
		\bigl\| \nabla^2 \dd \fScaled{\tau_k}(x_k,0) \bigr\|^2_{\tau_k} = \frac{ \|\nabla^2 \dd f(x_k,t_k) \|^2}{\tau_k^4} = \frac{\|\nabla^2 \dd f(x_k,t_k)\|^2}{\mu_k} \ge 1 \,.
	\]
	Since  the sub-convergence to $(M_{\infty},\gind_{\infty},x_{\infty})$ was smooth,
	the limit $\fScaled{\infty}$ is smooth and satisfies
	\[
		\| \nabla \dd \fScaled{\infty}\| = 0 \qquad \text{and} \qquad \bigl\| \nabla^2 \dd \fScaled{\infty} (x_{\infty},0)\| \ge 1 \,,
	\]
	which is a contradiction, so that $\|\nabla^2 \dd f_t\|$ has to be bounded.
	
	The estimates for the higher-order derivatives follow by differentiating
	equation \eqref{eq:pMCF} and repeating the above argument.
	We thus obtain that the spatial derivatives as well as the time derivatives
	of $f_t$ of any positive order are uniformly bounded.
\end{proof}

\begin{lemma}
	\label{lem:HOBd}
	Let $F:\Rb^m\times[0,T)\to \Rb^m\times N$ be a smooth, graphic solution
	to \eqref{eq:MCF} with bounded geometry and denote by $f_t:\Rb^m\to N$
	the corresponding maps. Assume $f_0$ is strictly length-decreasing
	and
	further assume that $\|\Hv\|\le C$ on $\Rb^m\times[0,T)$ for some
	constant $C\ge 0$. Then for every $k\ge 1$ there exists a constant
	$C_k\ge 0$, such that
	\[
		\sup_{x\in\Rb^m} \| \nabla^k \dd f_t(x)\|^2 \le C_k
	\]
	for all $t\in[0,T)$.
\end{lemma}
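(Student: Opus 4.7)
The plan is to reduce the bounds for $k\ge 2$ to Lemma \ref{lem:HOEst1} and to attack the remaining case $k=1$ by a parabolic rescaling argument analogous to the one used for Lemma \ref{lem:HOEst1}, but with the bound $\|\Hv\|\le C$ playing the role of the second-order hypothesis there. By Lemma \ref{lem:LenPres}, the strict length-decreasing property is preserved, so $\|\dd f_t\|^2 \le (1-\delta)^2$ on $\Rb^m\times[0,T)$. Hence, once $\sup_{x\in\Rb^m}\|\nabla \dd f_t(x)\|^2 \le C_1$ is established, Lemma \ref{lem:HOEst1} immediately yields the estimates $\sup_x\|\nabla^k \dd f_t(x)\|^2\le C_k$ for all $k\ge 2$.

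For the case $k=1$, suppose the bound fails. Exploiting the bounded-geometry assumption on $F$, one extracts a sequence $(x_k,t_k)$ with $\mu_k\coloneqq \|\nabla \dd f_{t_k}(x_k)\|^2 \to\infty$ and $\sup_{t\le t_k}\|\nabla \dd f_t\|^2 \le 2\mu_k$. Setting $\tau_k\coloneqq \mu_k^{1/2}$ and performing the parabolic scaling by $\tau_k$ at $(x_k,t_k)$ produces a sequence of graphical mean curvature flows $F_k^{\tau_k}$ into the ambient spaces $(\Rb^m\times N, \tau_k^2 \RgMN)$. Because $N$ has bounded geometry, the pointed ambient spaces converge on every compact set to flat Euclidean space $(\Rb^{m+n}, \gind_{\mathrm{eucl.}}, 0)$, and via Cheeger--Gromov compactness together with standard parabolic regularity the rescaled flows subconverge smoothly to a complete mean curvature flow $F^\infty$ in $\Rb^{m+n}$, realized as the graph of a smooth map $\fScaled{\infty}\colon\Rb^m\to\Rb^n$.

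Three properties of the limit then force a contradiction. First, since both source and target metrics are rescaled by $\tau_k^2$, the strict length-decreasing condition is preserved with the same factor $\delta$ and carries over to $\fScaled{\infty}$. Second, the relation $\|\Hv^{\tau_k}\|^2 = \|\Hv\|^2/\tau_k^2\le C/\tau_k^2 \to 0$ shows that the limit graph is minimal. Third, by the choice of $\tau_k$ one has $\|\nabla \dd \fScaled{\infty}(x_\infty,0)\|^2\ge 1$. However, a smooth strictly length-decreasing minimal graph from $\Rb^m$ into $\Rb^n$ is necessarily affine; this Bernstein-type rigidity may be derived from the elliptic analogue of Lemma \ref{lem:sEvolEq} (with $\RmN\equiv 0$) combined with a strong maximum principle argument along the lines of \cite{Lub16}, and forces $\nabla \dd \fScaled{\infty}\equiv 0$, contradicting the lower bound at the center. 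The main obstacle I anticipate is the careful bookkeeping of the scaling weights of the geometric objects and the verification that the Cheeger--Gromov limit is a smooth, complete graph over all of $\Rb^m$ with the properties claimed; both rely crucially on the uniform bounded-geometry assumptions on $N$ and on the flow $F$.
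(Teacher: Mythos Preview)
Your proposal is correct and follows essentially the same route as the paper: preserve the length-decreasing condition via Lemma~\ref{lem:LenPres}, reduce to $k=1$ using Lemma~\ref{lem:HOEst1}, and for $k=1$ perform a parabolic blow-up to obtain a strictly length-decreasing minimal graph in $\Rb^{m+n}$, which must be affine, contradicting the normalized lower bound on the Hessian at the center. Two minor points of comparison: the paper explicitly applies Lemma~\ref{lem:HOEst1} to the rescaled sequence (since after rescaling one has $\|\dd\fScaled{\tau_k}\|\le C_1$ and $\|\nabla\dd\fScaled{\tau_k}\|\le 1$) to secure the uniform higher-order bounds needed for smooth Cheeger--Gromov subconvergence, rather than appealing to ``standard parabolic regularity''; and for the Bernstein step the paper simply invokes Wang's theorem \cite{Wan03a}*{Theorem 1.1} via the bound on $\star\Omega_\infty$, rather than rederiving the rigidity from the elliptic version of Lemma~\ref{lem:sEvolEq}.
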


\begin{proof}
	By lemma \ref{lem:LenPres}, the length-decreasing condition is preserved in $[0,T)$,
	so that the relation $f_t^*\gN\le(1-\delta)\RgM$ holds in $[0,T)$. This shows the claim
	for $l=1$. By lemma \ref{lem:HOEst1}, we only need to prove the case $l=2$. Suppose
	the claim was false for $l=2$. Let
	\[
		\eta(t) \coloneqq \sup_{\substack{x\in\Rb^m\\t'\le t}} \| \nabla \dd f(x,t') \| \,.
	\]
	Then there is a sequence $(x_k,t_k)$ along which we have $\|\nabla \dd f(x_k,t_k)\|\ge \eta(t_k)/2$
	while $\eta(t_k) \to\infty$ as $t_k\to T$. Let $\tau_k \coloneqq \eta(t_k)$. 
	Consider the sequence
	\[
		F^{\tau_k}_k : \bigl( \Rb^m, F^{\tau_k\,*}_k(\tau_k^2\RgMN), x_k \bigr) \to \bigl( \Rb^m\times N, \tau_k^2 \RgMN, F^{\tau_k}_k(x_k) \bigr)
	\]
	of immersions of pointed Riemannian manifolds, where
	\[
		F^{\tau_k}_k(x,\tScaled) \coloneqq F\left( x, \frac{\tScaled}{\tau_k^2} + t_k \right) \,.
	\]
	Then $\{F^{\tau_k}_k\}$ is a sequence of mean curvature flows for $s\in[-\tau_k^2t_k,0]$.
	
	Since $N$ has bounded geometry, $\bigl( \Rb^m\times N, \tau_k^2 \RgMN, F^{\tau_k}_k(x_k) \bigr)$
	converges on every compact subset
	to the Euclidean space $\bigl(\Rb^{m+n},\gind_{\Rb^m\times\Rb^n},0)$ with its standard metric.
	The sequence $\bigl(\Rb^m,F^{\tau_k\,*}_k(\tau_k^2\RgMN),x_k\bigr)$ converges
	smoothly to a geometric limit $(M_{\infty}, \gind_{\infty}, x_{\infty})$ in the Cheeger-Gromov sense,
	and every manifold of the sequence is the graph of a function $\fScaled{k}$.
	In particular, the	
	limiting manifold $M_{\infty}$ is the graph of a function $\fScaled{\infty}:\Rb^m\to \Rb^n$. Since
	$M_{\infty}$ satisfies the mean curvature flow equation, the function $\fScaled{\infty}$ satisfies
	the quasilinear parabolic equation
	\[
		\partial_t \fScaled{\infty} = \sum_{i,j=1}^m \tgind_{\infty}^{ij} \frac{\partial^2 \fScaled{\infty}}{\partial x^i \partial x^j} \,.
	\]
	Note that by the definition of $\tau_k=\eta(t_k)$, it is
	\begin{align*}
		\| \dd \fScaled{\tau_k}(x,\tScaled) \|_{\tau_k} &= \| \dd f(x,t) \| \le C_1 \,, \\
		\| \nabla \dd \fScaled{\tau_k}(x,\tScaled) \|_{\tau_k} &= \tau_k^{-1} \| \nabla \dd f(x,t)\|^2 \le 1
	\end{align*}
	for all $(x,\tScaled)\in\Rb^m\times[-\tau_k^2 t_k,0]$,
	where $\|\cdot\|_{\tau_k}$ denotes the norms with respect to the rescaled metrics $\tau_k^2\gN$
	and $\tgind_{\tau_k}$.
	Moreover, by the definition
	of the sequence $(x_k,t_k)$, the estimate
	\begin{equation}
		\label{eq:HOEst}
		\|\nabla \dd \fScaled{\tau_k}(x_k,0)\|_{\tau_k} = \frac{\|\nabla \dd f(x_k,t_k)\|}{\tau_k} = \frac{\|\nabla \dd f(x_k,t_k)\|}{\eta(t_k)} \ge \frac{1}{2}
	\end{equation}
	holds. By lemma \ref{lem:HOEst1} we conclude that all higher-order derivatives of $\fScaled{\tau_k}$
	are uniformly bounded on $\Rb^m\times[-\tau_k^2t_k,0]$. 
	Since $\|\Hv\|\le C$ for the graphs $(x,f(x,t))$ by assumption, after rescaling we have
	\[
		\|\Hv_{\tau_k}\|_{\tau_k} \le \frac{C}{\tau_k}
	\]
	for the graphs $(x,\fScaled{\tau_k}(x,\tScaled))$. It follows that for each $\tScaled$,
	the limiting graph 
	\[
		\Gamma\bigl(\fScaled{\infty}(\cdot,r)\bigr) \subset \Rb^m\times\Rb^n
	\]
	must have $\Hv_{\infty}=0$ everywhere,
	as well as $\widetilde{\lambda}_i^2 \coloneqq \fScaled{\infty}^*\RgM(e_i,e_i)\le 1-\delta$. This in
	turn implies bounds on the Jacobian of the projection $\pi_1$ from the graph $(x,\fScaled{\infty}(x,r))$ to $\Rb^m$,
	\[
		\frac{1}{2^{m/2}} < \star{}\Omega_{\infty} = \frac{1}{\sqrt{\prod_{i=1}^m\bigl( 1+\widetilde{\lambda}_i^2 \bigr)}} \le 1 \,.
	\]
	Thus, we can apply a Bernstein-type theorem of Wang
	\cite{Wan03a}*{Theorem 1.1}
	to conclude that the graph $(x,\fScaled{\infty}(x,\tScaled))$ is an affine subspace of $\Rb^m\times\Rb^n$.
	Therefore, $\fScaled{\infty}$ has to be a linear map for each $\tScaled$, 
	but this contradicts \eqref{eq:HOEst}, which
	(taking the limit $k\to\infty)$ implies the estimate $\|\D^2\fScaled{\infty}(x_{\infty},0)\|\ge 1/2$.
\end{proof}

\begin{lemma}
	Let $f_0:\Rb^m\to N$ be smooth and length-decreasing. 
	Then Eq.\ \eqref{eq:pMCF} has a smooth solution $f_t(x)$ on $\Rb^m\times[0,\infty)$ with initial
	condition $f_0(x)$, such that
	for all $k\ge 2$ we have the estimate
	\[
		t^{k-1} \sup_{x\in\Rb^m} \| \nabla^{k-1} \dd f_t(x) \| \le C_{k,\delta}
	\]
	for some constant $C_{k,\delta}\ge 0$ depending only on $k$ and $\delta$.
\end{lemma}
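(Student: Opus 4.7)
The conclusion contains two distinct assertions: long-time existence of the graphical flow and the time-weighted decay of derivatives of order $\ge 2$. I treat them in turn.

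\textbf{Long-time existence.} Theorem~\ref{thm:ShortTimeEx} supplies a short-time smooth solution on a maximal interval $[0,T)$. Throughout $[0,T)$, Lemma~\ref{lem:LenPres} preserves $\sind-\delta\gind\ge 0$; the corollary following Lemma~\ref{lem:MCVEst} yields a uniform bound $\|\Hv\|^2\le C_0$ depending only on $\delta$ and the geometry of $N$; and Lemma~\ref{lem:HOBd} upgrades this to $\sup_{x}\|\nabla^k \dd f_t(x)\|^2\le C_k$ for every $k\ge 1$, with $C_k$ independent of $T$. Consequently $F_t$ has bounded geometry on $[0,T)$ with time-independent constants, so re-applying Theorem~\ref{thm:ShortTimeEx} at a time close to $T$ extends the solution past $T$; this forces $T=\infty$.

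\textbf{Decay estimates.} Fix $k\ge 2$ and argue by contradiction, mirroring the blow-up scheme of Lemma~\ref{lem:HOBd}. If
\[
M_k(T):=\sup_{x\in\Rb^m,\,t\in(0,T]} t^{k-1}\|\nabla^{k-1}\dd f_t(x)\|^2
\]
is unbounded as $T\to\infty$, choose $(x_j,t_j)$ with $t_j^{k-1}\|\nabla^{k-1}\dd f(x_j,t_j)\|^2\ge M_k(t_j)/2$ and diverging to infinity. The uniform bound on $\|\nabla^{k-1}\dd f\|$ from Lemma~\ref{lem:HOBd} forces $t_j\to\infty$. Setting $\tau_j^{k-1}:=\|\nabla^{k-1}\dd f(x_j,t_j)\|$, consider the parabolic rescaling
\[
F^{\tau_j}_\tScaled(x):=F\bigl(x,t_j+\tScaled/\tau_j^2\bigr),\qquad \tScaled\in[-\tau_j^2 t_j,0],
\]
with ambient metric $\tau_j^2\RgMN$. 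By construction $\|\nabla^{k-1}\dd f^{\tau_j}(x_j,0)\|_{\tau_j}=1$; the almost-maximizing choice confines $\|\nabla^{k-1}\dd f^{\tau_j}\|_{\tau_j}$ to a universal constant on $\Rb^m\times[-\tau_j^2 t_j/2,0]$; and the hypothesis $\tau_j^{2(k-1)}t_j^{k-1}\to\infty$ gives $\tau_j^2 t_j\to\infty$, so the rescaled time domain exhausts $(-\infty,0]$. Together with an induction on $k$ controlling lower-order derivatives, a variant of Lemma~\ref{lem:HOEst1} applied along the rescaled flow provides uniform bounds on all spatial derivatives on compact slabs, and Cheeger-Gromov compactness extracts a smooth eternal length-decreasing graphical mean curvature flow $F^\infty$ on $(-\infty,0]$ with $\|\nabla^{k-1}\dd f^\infty(x_\infty,0)\|_\infty=1$. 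If $\tau_j\to\infty$, the targets $(\Rb^m\times N,\tau_j^2\RgMN)$ collapse to Euclidean space and $\|\Hv^{\tau_j}\|_{\tau_j}=\|\Hv\|/\tau_j\to 0$, so $F^\infty$ is a stationary length-decreasing graph in $\Rb^{m+n}$; Wang's Bernstein theorem \cite{Wan03a}*{Theorem~1.1} forces $f^\infty$ to be affine at every time slice, contradicting the normalization.

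\textbf{Main obstacle.} The delicate point is that $\|\nabla^{k-1}\dd f\|$ is itself already uniformly bounded by Lemma~\ref{lem:HOBd}, so the normalization $\tau_j^{k-1}=\|\nabla^{k-1}\dd f(x_j,t_j)\|$ keeps $\tau_j$ merely bounded. In this regime the rescaled ambient does not become Euclidean, $\|\Hv^\infty\|$ need not vanish, and Wang's Bernstein theorem is unavailable as stated. The resolution is to exploit the eternal structure of $F^\infty$: Corollary~\ref{cor:trEst} propagated backwards in time along $F^\infty$, combined with the length-decreasing constraint $\tr(\sind_\infty)\le m$, forces $\tr(\sind_\infty)\to m$ as $\tScaled\to-\infty$, which in turn forces $\dd f^\infty\equiv 0$ on all of $(-\infty,0]$ and again contradicts the normalization. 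Making this backward-rigidity argument precise, and verifying that the resulting constants depend only on $k$ and $\delta$ uniformly in $k$, is where the main technical effort lies.
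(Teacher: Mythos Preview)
Your long-time existence argument matches the paper's. The decay argument diverges and contains a genuine gap.

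The paper handles $k=2$ first, by itself: after rescaling, the uniform bound $\|\Hv\|\le C$ forces the blow-up limit to be a minimal length-decreasing entire graph in Euclidean space, to which Wang's Bernstein theorem applies. For $k\ge 3$ the paper proceeds \emph{inductively}: having already secured $t\,\|\nabla\dd f_t\|^2\le C_{2,\delta}$, the chosen rescaling gives $\|\nabla\dd\fScaled{\tau_j}\|^2_{\tau_j}\le 2C_{2,\delta}/\sigma_j^{1/(k-1)}\to 0$ on the rescaled time interval, so $\nabla\dd\fScaled{\infty}\equiv 0$ directly---no Bernstein theorem, no curvature hypothesis on $N$---and the contradiction with $\|\nabla^{k-1}\dd\fScaled{\infty}(x_\infty,0)\|\ge 1$ is immediate. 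This two-step structure, feeding the $k=2$ decay back into the higher-order blow-up, is precisely what your unified scheme is missing.

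Your proposed substitute, backward rigidity via Corollary~\ref{cor:trEst}, fails as written. Since $(\partial_t-\Delta)\tr(\sind)\ge 0$, the infimum of $\tr(\sind)$ is non-decreasing in forward time; along an ancient solution it therefore \emph{decreases} (or is constant) as $\tScaled\to-\infty$ and cannot tend to $m$. Applying Lemma~\ref{lem:trTimeEst} from time $-T$ and sending $T\to\infty$ yields at best $\tr(\sind_\infty)\ge m-1$ when $\sigma>0$; but $\dd f^\infty=0$ is equivalent to $\tr(\sind_\infty)=m$, not $m-1$, so no contradiction results. The argument also collapses entirely when $\sigma=0$, whereas the decay estimate is claimed in that regime as well (see the remark after Theorem~\ref{thm:ThmA}).
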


\begin{proof}
	By theorem \ref{thm:ShortTimeEx} we know that \eqref{eq:pMCF}
	has a smooth short-time solution on $[0,T)$ with initial condition
	$f_0(x)$ and bounded geometry for any $t\in[0,T)$. 
	Assume $T<\infty$. 
	Then lemma \ref{lem:HOBd} implies $\|\nabla^{k-1}\dd f\|\le C_{k,\delta}$ 
	in $[0,T)$ for all integers $k\ge 1$, which by continuity extends
	to $[0,T]$. By theorem \ref{thm:ShortTimeEx}, we can extend the solution
	beyond $T$, which contradicts the definition of $T$, so that $T=\infty$.
	
	For the second part, first consider $k=1$, i.\,e. we show
	\begin{equation}
		\label{eq:decay2}
		\sup_{x\in\Rb^m} \|\nabla \dd f(x,t)\|^2 t \le C_{1,\delta}
	\end{equation}
	for any $t$. Assume this is not the case. Since $f(x,t)$ satisfies
	the bounded geometry condition, it is
	\[
		\sup_{x\in\Rb^m} \|\nabla \dd f(x,t)\| < \infty
	\]
	for any $t$. If Eq.\ \eqref{eq:decay2} does not hold, there exists
	a sequence $t_k\to\infty$ with
	\begin{equation}
		\label{eq:HORescale}
		\sup_{\substack{x\in\Rb^m\\ t\le t_k}} \|\nabla \dd f(x,t)\|^2 t_k = \sup_{x\in\Rb^m} \| \nabla \dd f(x,t_k) \|^2 t_k \eqqcolon 2\mu_k \quad \xrightarrow{k\to\infty} \infty \,.
	\end{equation}
	Further, for each $t_k$, there exists $x_k\in\Rb^m$, such that
	\begin{equation}
		\label{eq:xkDef}
		\|\nabla \dd f(x_k,t_k)\|^2 t_k \ge \mu_k \,.
	\end{equation}
	Let $\tau_k\coloneqq\sqrt{\mu_k/t_k}$ and consider the sequence
	\[
		F^{\tau_k}_k : \bigl( \Rb^m, F^{\tau_k\,*}_k(\tau_k^2\RgMN), x_k \bigr) \to \bigl( \Rb^m\times N, \tau_k^2 \RgMN, F^{\tau_k}_k(x_k) \bigr)
	\]
	of immersions of pointed Riemannian manifolds, where
	\[
		F^{\tau_k}_k(x,\tScaled) \coloneqq F\left( x, \frac{\tScaled}{\tau_k^2} + t_k \right) \,.
	\]
	Then $\{F^{\tau_k}_k\}$ is a sequence of mean curvature flows for $s\in[-\tau_k^2t_k,0]$.
	
	Since $N$ has bounded geometry, $\bigl( \Rb^m\times N, \tau_k^2 \RgMN, F^{\tau_k}_k(x_k) \bigr)$
	converges on every compact subset
	to the Euclidean space $\bigl(\Rb^{m+n},\gind_{\Rb^m\times\Rb^n},0)$ with its standard metric.
	The sequence $\bigl(\Rb^m,F^{\tau_k\,*}_k(\tau_k^2\RgMN),x_k\bigr)$ converges
	smoothly to a geometric limit $(M_{\infty}, \gind_{\infty}, x_{\infty})$ in the Cheeger-Gromov sense,
	and every manifold of the sequence is the graph of a function $\fScaled{k}$.
	In particular, the	
	limiting manifold $M_{\infty}$ is the graph of a function $\fScaled{\infty}:\Rb^m\to \Rb^n$. Since
	$M_{\infty}$ satisfies the mean curvature flow equation, the function $\fScaled{\infty}$ satisfies
	the quasilinear parabolic equation
	\[
		\partial_t \fScaled{\infty} = \sum_{i,j=1}^m \tgind_{\infty}^{ij} \frac{\partial^2 \fScaled{\infty}}{\partial x^i \partial x^j} \,.
	\]
	Further, by the definition of $\tau_k$ and $(x_k,t_k)$, the relations
	\[
		\|\dd \fScaled{\tau_k}(x,r)\|_{\tau_k} = \|\dd f(x,t)\| \,, \qquad \bigl\| \nabla \dd \fScaled{\tau_k}(x,r)\bigr\|_{\tau_k} = \frac{\|\nabla \dd f(x,t)\|}{\sqrt{\mu_k/t_k}} \stackrel{\text{Eq.\ \eqref{eq:HORescale}}}{\le}\sqrt{2}
	\]
	hold on $\Rb^m\times[-\mu_k,0]$. On the other hand, by the definition
	of $\mu_k$, it is
	\begin{equation}
		\label{eq:Lim2ndDeriv}
		\|\nabla \dd \fScaled{\infty}(x_k,0)\|_{\tau_k} = \frac{1}{\mu_k/t_k} \|\nabla \dd f(x_k,t_k)\| \stackrel{\text{Eq.\ \eqref{eq:xkDef}}}{\ge} 1\,.
	\end{equation}
	Note that for the graph of $f(x,t)$, we have $\|\Hv\|\le C$
	for all $t$ and some constant $C\ge 0$. Thus, for any
	$0<\mu<\mu_k$ we conclude
	\[
		\|\Hv_{\tau_k}\|^2_{\tau_k} \le \frac{C}{\mu_k} \quad \xrightarrow{k\to\infty} 0
	\]
	uniformly on $\Rb^m\times[-\mu,0]$. It follows that $\Hv_{\infty}=0$,
	so that $\bigl(y,\fScaled{\infty}(y,r)\bigr)$ is a minimal graph
	in $\Rb^n\times\Rb^m$. 
	Since
	\[
		\fScaled{\tau_k}^*(\tau_k^2\gN)(v,v) = \tau_k^2 \fScaled{\tau_k}^*\gN(v,v) \le (1-\delta) \tau_k^2 \gM(v,v) \,,
	\]
	the maps $\fScaled{\tau_k}$ are strictly length-decreasing, so that in particular
	the limiting map $\fScaled{\infty}:\Rb^m\to\Rb^n$ is strictly length-decreasing.
	Thus, by applying the
	Bernstein-type theorem \cite{Wan03a}*{Theorem 1.1}, we conclude
	that $\fScaled{\infty}$ has to be an affine map.
	But this contradicts Eq.\ \eqref{eq:Lim2ndDeriv}, so that
	our initial assumption was false, which eventually proves
	Eq.\ \eqref{eq:decay2}.
	
	Now let $l\ge 3$ and suppose that $\|\nabla^{l-1} \dd f\|^2 t^{l-1}$
	is not uniformly bounded. Then there exists a sequence $(x_k,t_k)$,
	such that
	\[
		\sup_{\substack{x\in\Rb^m\\t\le t_k}} \| \nabla^{l-1} \dd f(x,t)\|^2 t^{l-1} \eqqcolon 2\sigma_k \qquad \xrightarrow{k\to\infty} \infty
	\]
	and
	\begin{equation}
		\label{eq:xkDef2}
		\| \nabla^{l-1} \dd f(x_k,t_k) \|^2 t_k^{l-1} \ge \sigma_k \,.
	\end{equation}
	Let 
	\[
		\tau_k \coloneqq \sqrt{\frac{\sigma_k^{1/(l-1)}}{t_k}}
	\]
	and consider the sequence
	\[
		F^{\tau_k}_k : \bigl( \Rb^m, F^{\tau_k\,*}_k(\tau_k^2\RgMN), x_k \bigr) \to \bigl( \Rb^m\times N, \tau_k^2 \RgMN, F^{\tau_k}_k(x_k) \bigr)
	\]
	of immersions of pointed Riemannian manifolds
	for $t_k/2\le t\le t_k$, where
	\[
		F^{\tau_k}_k(x,\tScaled) \coloneqq F\left( x, \frac{\tScaled}{\tau_k^2} + t_k \right) \,.
	\]
	Then $\{F^{\tau_k}_k\}$ is a sequence of mean curvature flows for $r\in[-\sigma_k^{1/(l-1)},0]$.
	For the corresponding maps $\fScaled{\tau_k}$,
	we calculate
	\begin{align*}
		\| \nabla \dd \fScaled{\tau_k}(x,\tScaled)\|^2_{\tau_k} &= \tau^{-2}_k \|\nabla \dd f(x,t)\|^2 = \frac{t_k}{\sigma_k^{1/(l-1)}} \|\nabla \dd f(x,t)\|^2 \\
			&\le \frac{2t}{\sigma^{1/(l-1)}_k} \|\nabla \dd f(x,t)\|^2 \stackrel{\text{Eq.\ \eqref{eq:decay2}}}{\le} \frac{2C_{1,\delta}}{\sigma^{1/(l-1)}_k} \,.
	\end{align*}
	Since $\sigma_k\to\infty$ for $k\to\infty$ and $l\ge 3$, we deduce
	\[
		\|\nabla \dd \fScaled{\tau_k}(\xScaled,\tScaled)\|^2_{\tau_k} \xrightarrow{k\to\infty} 0 \,.
	\]
	Now fix any $\eta$ with 
	$\eta\in\bigl( 0, \sigma_k^{1/(l-1)}\bigr)$ for all $k$.
	By lemma \ref{lem:HOEst1}, all higher order derivatives of $\fScaled{\tau_k}$
	are uniformly bounded on $\Rb^m\times[-\eta,0]$.
	Thus, $\fScaled{\tau_k}$ sub-converges on compact sets to a smooth solution of the quasilinear
	parabolic equation
	\[
		\partial_t \fScaled{\infty} = \sum_{i,j=1}^m \tgind_{\infty}^{ij} \frac{\partial^2 \fScaled{\infty}}{\partial x^i \partial x^j}
	\]
	on $\Rb^m\times[-\eta,0]$. But then $\|\nabla \dd \fScaled{\infty}\|_{\infty}=0$
	on $\Rb^m\times[-\eta,0]$ contradicts Eq.\ \eqref{eq:xkDef2},
	which implies $\|\nabla^{l-1} \dd \fScaled{\tau_k}(x_k,0)\|^2_{\tau_k}\ge 1$ for all
	$l\ge 3$ and any $k$.
\end{proof}

%
%
\section{Examples}

\begin{example}[Hyperbolic space I]
	\label{ex:HS1}
	We give an example of the mean curvature flow of
	a strictly length-decreasing map $f$,
	where the corresponding graphs $\Gamma(f_t)$ diverge to spatial infinity
	as time tends to infinity.
	\begin{enumerate}[label=(\roman*)]
		\item \textbf{Geometic Setup.} 
			Let $m=1$ and let $N=\Hb^2$ be the $2$-dimensional hyperbolic space.
			In the upper half-plane model, $\Hb^2$ is identified with
			\[
				\Hc \coloneqq \{ (y^1,y^2) \in \Rb^2 : y^2>0 \}
			\]
			with the metric
			\[
				\gind_{\Hc} \coloneqq \frac{1}{(y^2)^2}\bigl( \der y^1\otimes \der y^1 + \der y^2 \otimes \der y^2 \bigr) \,.
			\]
			The only non-vanishing Christoffel symbols are given by
			\[
				\Gamma_{12}^1 = \Gamma_{21}^1 = -\Gamma_{11}^2 = \Gamma_{22}^2 = -\frac{1}{y^2} \,.
			\]
			The induced metric of the graph at the point $(x,f(x))$ is given by
			\[
				\gind = \left( 1 + \frac{(\partial_x f^1(x))^2 + (\partial_x f^2(x))^2}{(f^2(x))^2} \right) \der x \otimes \der x
			\]
			with inverse
			\[
				\gind^{-1} = \frac{(f^2(x))^2}{(f^2(x))^2+(\partial_x f^1(x))^2 + (\partial_x f^2(x))^2} \frac{\partial}{\partial x} \otimes \frac{\partial}{\partial x} \,.
			\]
			Then equation \eqref{eq:pMCF} reads
			\begin{align}
				\label{eq:MCFex1}
				\partial_t f^1(x) &= \gind^{-1} \left( \partial_x^2 f^1(x) - 2 \frac{1}{f^2(x)} (\partial_x f^1(x)) (\partial_x f^2(x)) \right) \,, \\
				\label{eq:MCFex2}
				\partial_t f^2(x) &= \gind^{-1} \left( \partial_x^2 f^2(x) + \frac{1}{f^2(x)} \bigl( (\partial_x f^1(x))^2 - (\partial_x f^2(x))^2 \bigr) \right) \,.
			\end{align}
			In these coordinates, $f$ is (strictly) length-decreasing at $x\in\Rb$ if
			\[
				\frac{1}{(f^2(x))^2}\Bigl( (\partial_xf^1(x))^2+(\partial_xf^2(x))^2 \Bigr) \le 1-\delta
			\]
			for some $\delta\in(0,1]$.
		\item \textbf{Solution to the Mean Curvature Flow.}
			Let us make the ansatz 
			\[
				f_t(x) = \bigl( f^1_t(x), f^2_t(x) \bigr) = \bigl( x, d(t) \bigr)
			\]
			for some function $d:\Rb \to \Rb$ which is to be determined. 
			Since $\partial_x f^1_t(x) = 1$ and $\partial_x f^2_t(x)=0$, the length-decreasing
			condition is equivalent to
			\[
				\frac{1}{d^2(t)} \le 1-\delta \qquad \Leftrightarrow \qquad d^2(t) \ge \frac{1}{1-\delta} \,.
			\]
			Further, equation \eqref{eq:MCFex1} is identically satisfied, while
			equation \eqref{eq:MCFex2} evaluates to
			\[
				\partial_t d(t) = \frac{d(t)}{d^2(t)+1} \,.
			\]
			Integrating this equation, we obtain
			\[
				t-t_0 = \ln \left[ d(t) \exp \left( \frac{1}{2} d^2(t) \right) \right] = \frac{1}{2} \ln \left[ d^2(t) \exp\bigl( d^2(t) \bigr) \right] \,.
			\]
			The (increasing) solution for $d(t)$ is then given by
			\[
				d(t) = \sqrt{\mathrm{W}\bigl( \exp\bigl(2(t-t_0)\bigr) \bigr)} \,,
			\]
			where $\mathrm{W}$ denotes the principal branch of the Lambert W function.
			In particular,
			the lower bound from the length-decreasing condition is preserved, and in
			the limit $t\to\infty$, we have $f_t^*\gind_{\Hc} \to 0$. 
			Using the asymptotic expansion 
			\[
				\sqrt{\mathrm{W}\bigl(\exp\bigl(2(t-t_0)\bigr)\bigr)} \quad \stackrel{t\to\infty}{\approx} \quad \sqrt{\ln \frac{\exp\bigl(2(t-t_0)\bigr)}{2(t-t_0)}} \,,
			\]
			we also see that $d(t)\to\infty$ as $t\to\infty$. 
			
			The embedding corresponding
			to the graph of $f_t$ is given by
			\[
				F_t(x) = \bigl( x, x, d(t) \bigr) = \left( x, x, \sqrt{\mathrm{W}\bigl( \exp\bigl(2(t-t_0)\bigr) \bigr)}  \right) \in \Rb \times \Hc \,.
			\]
			Since the induced metric on the graph
			\[
				\gind(\partial_x,\partial_x) = 1 + \frac{1}{d^2(t)} = 1 + \frac{1}{\mathrm{W}\bigl(\exp\bigl(2(t-t_0)\bigr)\bigr)}
			\]
			does not depend on $x$, the Christoffel symbols of the graph vanish and
			the second fundamental form evaluates to
			\[
				\A(\partial_x,\partial_x) = \nabla^{\gind_{\Rb\times\Hc}}_{\dd F(\partial_x)}\dd F(\partial_x) = \frac{1}{d(t)} \frac{\partial}{\partial y^2} \,.
			\]
			Consequently, the mean curvature vector is given by
			\[
				\Hv = \gind^{-1} \A(\partial_x,\partial_x) = \frac{d^2(t)}{1+d^2(t)} \frac{1}{d(t)} \frac{\partial}{\partial y^2} = \frac{d(t)}{1+d^2(t)} \frac{\partial}{\partial y^2} \,.
			\]
			From the differential equation satisfied by $d$ we also obtain
			\[
				\partial_t F_t(x) = \frac{d(t)}{1+d^2(t)} \frac{\partial}{\partial y^2} \,,
			\]
			which shows that $F_t$ is indeed a solution of the mean curvature flow.
		\item\textbf{Decay Behavior.}
			Let us calculate the square norm of $\Hv$,
			\[
				\|\Hv\|^2 = \frac{1}{(f^2(x))^2} \frac{d^2(t)}{\bigl(1+d^2(t)\bigr)^2} = \frac{1}{d^2(t)} \frac{d^2(t)}{\bigl(1+d^2(t)\bigr)^2} = \frac{1}{\bigl(1+d^2(t)\bigr)^2} \,.
			\]
			In particular, since
			\[
				\|\Hv\| \approx \frac{1}{2(t-t_0) - \ln(2(t-t_0))}
			\]
			for large $t$, we have
			\[
				c_1 \le t\|\Hv\| \le c_2
			\]
			for some $0<c_1<c_2$ and large $t$. Further, for sufficiently large
			$t_1<t_2$, we calculate the length
			of the curve $\gamma:[t_1,t_2]\to \Rb\times \Hc$ given by
			\[
				\gamma(x,t) \coloneqq F_t(x)
			\]
			to be
			\begin{align*}
				L(\gamma) &= \int_{t_1}^{t_2} \| \partial_t F_t(x) \| \der t = \int_{t_1}^{t_2} \bigl\|\Hv(x,t)\bigr\| \der t \\
					& \ge \int_{t_1}^{t_2} \frac{c_1}{t} \der t = c_1 \bigl( \ln t_2 - \ln t_1 \bigr) \,.
			\end{align*}
			This diverges as $t_2\to\infty$ and agrees with the previous discussion.
	\end{enumerate}
\end{example}

\begin{example}[Hyperbolic space II]
	\label{ex:HS2}
	In this example, we construct an explicit solution to the graphical mean curvature
	flow of a map $f:\Rb\to\Hb^2$, where this time we use the Poincar\'{e} disk
	as a model for the hyperbolic space.
	\begin{enumerate}[label=(\roman*)]
		\item \textbf{Geometric Setup.} 
			The Poincar\'{e} disk model of hyperbolic space is the open unit disk $D\subset\Rb^2$
			endowed with the metric
			\[
				\gind_D \coloneqq \frac{4}{(1-r^2)^2} \bigl( \der x \otimes \der x + \der y \otimes \der y \bigr) \,,
			\]
			where $r^2\coloneqq x^2+y^2$. The inverse metic can be written as
			\[
				\gind_D^{-1} = \frac{(1-r^2)^2}{4}\bigl( \partial_x\otimes\partial_x + \partial_y\otimes\partial_y\bigr)
			\]
			and for the Christoffel symbols, we obtain
			\begin{align*}
				\Gamma_{11}^1 &= \Gamma_{12}^2 = \Gamma_{21}^2 = -\Gamma_{22}^1 = \frac{2x}{1-r^2}\,, \\
				\Gamma_{12}^1 &= \Gamma_{21}^1 = -\Gamma_{11}^2 = \Gamma_{22}^2 = \frac{2y}{1-r^2} \,.
			\end{align*}
			The metric induced on the graph of a function $f:\Rb\to D$ is given by
			\[
				\gind = \left( 1 + 4 \frac{(\partial_x f^1)^2 + (\partial_x f^2)^2}{\left( 1 - (f^1)^2 - (f^2)^2 \right)^2} \right) \der x \otimes \der x \,.
			\]
			The mean curvature flow system \eqref{eq:pMCF} reads
			\begin{align*}
				\partial_t f^1 &= \gind^{-1} \biggl( \partial_x^2 f^1 + \frac{2}{1-(f^1)^2 - (f^2)^2} \Bigl( f^1 (\partial_x f^1)^2 - f^1 (\partial_x f^2)^2 \\
					& \qquad \qquad \qquad \qquad \qquad \qquad \qquad \qquad \qquad \qquad + 2 f^2 (\partial_x f^1)(\partial_x f^2) \Bigr) \biggr) \,, \\
				\partial_t f^2 &= \gind^{-1} \biggl( \partial_x^2 f^2 + \frac{2}{1- (f^1)^2 - (f^2)^2} \Bigl( -f^2(\partial_x f^1)^2 + f^2(\partial_x f^2)^2 \\
					& \qquad \qquad \qquad \qquad \qquad \qquad \qquad \qquad \qquad \qquad + 2 f^1 (\partial_x f^1)(\partial_x f^2) \Bigr) \biggr) \,.
			\end{align*}
			The map $f$ is strictly length-decreasing precisely if
			\[
				f^*\gind_D(v,v) = 4 \frac{(\partial_x f^1)^2 + (\partial_x f^2)^2}{\left( 1 - (f^1)^2 - (f^2)^2 \right)^2} \|v\|_{\Rb}^2 \le (1-\delta) \|v\|_{\Rb}^2
			\]
			for some $\delta\in(0,1]$ and all vector fields $v$.
		\item \textbf{Solution to the Mean Curvature Flow.}
			Let us make the ansatz
			\[
				f_t (x) = \bigl( r(t) \sin(x) , r(t) \cos(x) \bigr) \,.
			\]
			Then $f_t$ is strictly length-decreasing precisely if
			\[
				4 \frac{r^2}{\bigl(1-r^2\bigr)^2} \le 1-\delta \qquad \Leftrightarrow \qquad r \le \sqrt{2} - 1 - \widetilde{\delta}
			\]
			for some $\widetilde{\delta}\in(0,\sqrt{2}-1]$ (depending on $\delta$).
			The induced metric simplifies to
			\[
				\gind = \left( 1 + 4 \frac{r^2}{(1-r^2)^2} \right) \der x \otimes \der x = \left( \frac{1+r^2}{1-r^2} \right)^2 \der x \otimes \der x
			\]
			and its inverse is given by
			\[
				\gind^{-1} = \left( \frac{1-r^2}{1+r^2} \right)^2 \partial_x \otimes \partial_x \,.
			\]
			The mean curvature flow system reads
			\[
				\partial_t r(t) = - r(t) \frac{1-r^2(t)}{1+r^2(t)}
			\]
			and the solution for $r(t)\le 1$ to this ordinary differential equation is given by
			\[
				r(t) = \frac{1}{2} \Bigl( \sqrt{ c_1^2 \ee^{2t} + 4} - c_1 \ee^t \Bigr) \,, \qquad c_1\ge 0 \,.
			\]
			We remark that
			\[
				r(t) = \frac{1}{\sqrt{ c_1^2 \ee^{2t} + 4} +  c_1 \ee^t} \qquad \xrightarrow{t\to\infty} \qquad 0 \,.
			\]
		\item \textbf{Decay Behavior.}
			Since $\gind$ only depends on $r$, which does not depend on $x$, 
			the Christoffel symbols of the induced metric vanish, and the second fundamental
			form is given by
			\[
				\A_{xx} = - r(t)\sin(x) \frac{1+r^2(t)}{1-r^2(t)} \frac{\partial}{\partial y^1} - r(t) \cos(x) \frac{1+r^2(t)}{1-r^2(t)} \frac{\partial}{\partial y^2} \,.
			\]
			The mean curvature vector thus is
			\[
				\Hv = \gind^{-1} \A_{xx} = - r(t) \sin(x) \frac{1-r^2(t)}{1+r^2(t)} \frac{\partial}{\partial y^1} - r(t) \cos(x) \frac{1-r^2(t)}{1+r^2(t)} \frac{\partial}{\partial y^2} \,.
			\]
			Since
			\[
				\partial_t F_t(x) = (\partial_t r(t)) \sin(x) \frac{\partial}{\partial y^1} + (\partial_t r(t)) \cos(x) \frac{\partial}{\partial y^2} \,,
			\]
			the differential equation for the mean curvature flow reduces to the ordinary
			differential equation considered above.
			Further, 
			since
			\[
				r(t) \le \frac{1}{2 c_1} \ee^{-t} \,,
			\]
			for the square norm of the mean curvature vector we obtain
			\[
				\|\Hv\|^2 = \frac{4 r^2(t)}{(1+r^2(t))^2} \le \frac{1}{c_1^2} \ee^{-2t} \,.
			\]	
	\end{enumerate}
\end{example}

\begin{example}[Hyperbolic Space III]
	\label{ex:HS3}
	We give explicit examples of strictly length-decreasing maps $f:\Rb\to\Hb^2$ which are stationary points
	of the mean curvature flow.
	\begin{enumerate}[label=(\roman*)]
		\item In the upper half-plane model $\Hc$, consider the family of functions given by
			\[
				f:\Rb\times[0,T)\to\Hc \,, \qquad f(x,t) \coloneqq \bigl( x_0, \exp(cx) \bigr) \,,
			\]
			where $0\le c\le 1-\delta$ is fixed. Then for any $v\in \Gamma(T\Rb)$ we have
			\[
				f^*\gind_{\Hc}(v,v) = c^2 \|v\|_{\Rb}^2 \le (1-\delta) \|v\|_{\Rb}^2 \,,
			\]
			so that $f$ is a (family of) strictly length-decreasing maps. Further, since
			\begin{multline*}
				\partial_x^2 f^2_t(x) + \frac{1}{f^2_t(x)}\Bigl( (\partial_x f^1_t(x)^2-(\partial_x f^2_t(x)^2\Bigr) \\
					= c^2 \exp(cx) + \frac{1}{\exp(cx)}\bigl( -c^2 (\exp(cx))^2 \bigr) = 0 \,,
			\end{multline*}
			which mean that $f_t(x)$ is stationary under graphical mean curvature flow.
		\item In the disk model, consider the family of functions given by
			\[
				f:\Rb\times[0,T)\to D\,, \qquad f(x,t) \coloneqq \left( \tanh\left(\frac{c}{2}x\right), 0 \right) \,,
			\]
			where $0\le c\le 1-\delta$ is fixed. Then for any $v\in\Gamma(T\Rb)$ we have
			\[
				f^*\gind_D(v,v) = c^2 \|v\|_{\Rb}^2 \le (1-\delta) \|v\|_{\Rb}^2 \,,
			\]
			so that $f$ is a (family of) strictly length-decreasing maps. Using the ansatz above, the equation for the
			first component of $f_t(x)$ that needs to be satisfied for the graphical mean curvature
			flow is given by
			\begin{align*}
				\partial_t f^1(x,t) &= \gind^{-1} \partial_x^2 f^1(x,t) + \gind^{-1} \sum_{a,b=1}^2 \Gamma_{ab}^1 (\partial_x f^a(x,t))(\partial_x f^b(x,t)) \\
					&= \gind^{-1}\left( \partial_x^2 f^1(x,t) + \frac{2 f^1(x,t)}{1-(f^1(x,t))^2} (\partial_x f^1(x,t))^2 \right) \,.
			\end{align*}
			Since
			\begin{align*}
				\partial_x^2 f^1(x,t) &= - \frac{c^2}{2} \tanh\left( \frac{c}{2} x \right) \left( 1 - \tanh^2 \left( \frac{c}{2} x \right) \right) \\
					&= - c \tanh\left( \frac{c}{2} x \right) \partial_x f^1(x,t) = -c f^1(x,t) \partial_x f^1(x,t) \\
					&= - \frac{2}{1-(f^1(x,t))^2}(\partial_x f(x,t))^2 \,,
			\end{align*}
			the mean curvature flow equation is identically satisfied.
	\end{enumerate}
\end{example}

\begin{bibdiv}  
\begin{biblist}
\bib{Bra78}{book}{
  title = {The motion of a surface by its mean curvature},
  publisher = {Princeton University Press, Princeton, N.J.},
  year = {1978},
  author = {Brakke, Kenneth A.},
  volume = {20},
  pages = {i+252},
  series = {Mathematical Notes},
  isbn = {0-691-08204-9},
}

\bib{CCH12}{article}{
  author = {Chau, Albert},
  author = {Chen, Jingyi},
  author = {He, Weiyong},
  title = {Lagrangian mean curvature flow for entire {L}ipschitz graphs},
  journal = {Calc. Var. Partial Differential Equations},
  year = {2012},
  volume = {44},
  pages = {199--220},
  number = {1-2},
  doi = {\doi{10.1007/s00526-011-0431-x}},
  issn = {0944-2669},
}

\bib{CCY13}{article}{
  author = {Chau, Albert},
  author = {Chen, Jingyi},
  author = {Yuan, Yu},
  title = {Lagrangian mean curvature flow for entire {L}ipschitz graphs {II}},
  journal = {Math. Ann.},
  year = {2013},
  volume = {357},
  pages = {165--183},
  number = {1},
  doi = {\doi{10.1007/s00208-013-0897-2}},
  issn = {0025-5831},
}

\bib{CH10}{article}{
	author = {Chen, Jingyi},
	author = {He, Weiyong},
	title = {A note on singular time of mean curvature flow},
	journal = {Math. Z.},
	year = {2010},
	volume = {266},
	pages = {921--931},
	doi = {\doi{10.1007/s00209-009-0604-x}}
}

\bib{CLT02}{article}{
  author = {Chen, Jing Yi},
  author = {Li, Jia Yu},
  author = {Tian, Gang},
  title = {Two-dimensional graphs moving by mean curvature flow},
  journal = {Acta Math. Sin. (Engl. Ser.)},
  year = {2002},
  volume = {18},
  pages = {209--224},
  number = {2},
  doi = {\doi{10.1007/s101140200163}},
  issn = {1439-8516},
}

\bib{EH91}{article}{
  author = {Ecker, Klaus},
  author = {Huisken, Gerhard},
  title = {Interior estimates for hypersurfaces moving by mean curvature},
  journal = {Invent. Math.},
  year = {1991},
  volume = {105},
  pages = {547--569},
  number = {3},
  doi = {\doi{10.1007/BF01232278}},
  issn = {0020-9910},
}

\bib{EH89}{article}{
  author = {Ecker, Klaus},
  author = {Huisken, Gerhard},
  title = {Mean curvature evolution of entire graphs},
  journal = {Ann. of Math. (2)},
  year = {1989},
  volume = {130},
  pages = {453--471},
  number = {3},
  doi = {\doi{10.2307/1971452}},
  issn = {0003-486X},
}

\bib{Eichhorn91}{article}{
   author={Eichhorn, J\"urgen},
   title={The boundedness of connection coefficients and their derivatives},
   journal={Math. Nachr.},
   volume={152},
   date={1991},
   pages={145--158},
   issn={0025-584X},
   doi={\doi{10.1002/mana.19911520113}},
}

\bib{Ham86}{article}{
   author={Hamilton, Richard S.},
   title={Four-manifolds with positive curvature operator},
   journal={J. Differential Geom.},
   volume={24},
   date={1986},
   number={2},
   pages={153--179},
   issn={0022-040X},
}

\bib{Kaul76}{article}{
	author={Kaul, Helmut},
	title={Schranken f\"ur die Christoffelsymbole},
	journal={Manuscripta Math.},
	volume={19},
	date={1976},
	number={3},
	pages={261--273},
	issn={0025-2611},
	doi={\doi{10.1007/BF01170775}}
}

\bib{LL11}{article}{
  author = {Lee, Kuo-Wei},
  author = {Lee, Yng-Ing},
  title = {Mean curvature flow of the graphs of maps between compact manifolds},
  journal = {Trans. Amer. Math. Soc.},
  year = {2011},
  volume = {363},
  pages = {5745--5759},
  number = {11},
  doi = {\doi{10.1090/S0002-9947-2011-05204-9}},
  issn = {0002-9947},
}

\bib{Lub16}{article}{
  author = {Lubbe, Felix},
  title = {Mean curvature flow of contractions between Euclidean spaces},
  journal = {Calculus of Variations and Partial Differential Equations},
  year = {2016},
  volume = {55},
  pages = {1--28},
  number = {4},
  doi = {\doi{10.1007/s00526-016-1043-2}},
  issn = {1432-0835},
}

\bib{Lub18}{article}{
  author = {Lubbe, Felix},
  title = {Evolution of area-decreasing maps between two-dimensional Euclidean spaces},
  journal = {The Journal of Geometric Analysis},
  year = {2018},
  pages={1--22},
  issn={1559-002X},
  doi={\doi{10.1007/s12220-018-0006-6}},
  note={to appear}
}

\bib{MT07}{book}{
   author={Morgan, John},
   author={Tian, Gang},
   title={Ricci flow and the Poincar\'e conjecture},
   series={Clay Mathematics Monographs},
   volume={3},
   publisher={American Mathematical Society, Providence, RI; Clay
   Mathematics Institute, Cambridge, MA},
   date={2007},
   pages={xlii+521},
   isbn={978-0-8218-4328-4},
}

\bib{Sch93}{article}{
   author={Schoen, Richard M.},
   title={The role of harmonic mappings in rigidity and deformation
   problems},
   conference={
      title={Complex geometry},
      address={Osaka},
      date={1990},
   },
   book={
      series={Lecture Notes in Pure and Appl. Math.},
      volume={143},
      publisher={Dekker, New York},
   },
   date={1993},
   pages={179--200},
}

\bib{SHS16}{article}{
	author={Savas-Halilaj, Andreas},
	author={Smoczyk, Knut},
	title={Mean curvature flow of area decreasing maps between Riemann surfaces},
	journal={Ann. Global Anal. Geom.},
	volume={53},
	date={2018},
	number={1},
	pages={11--37},
	issn={0232-704X},
	doi={10.1007/s10455-017-9566-0},
}

\bib{SHS15}{article}{
  author = {Savas-Halilaj, Andreas},
  author = {Smoczyk, Knut},
  title = {Evolution of contractions by mean curvature flow},
  journal = {Math. Ann.},
  year = {2015},
  volume = {361},
  pages = {725--740},
  number = {3-4},
  doi = {\doi{10.1007/s00208-014-1090-y}},
  issn = {0025-5831},
}

\bib{SHS13}{article}{
  author = {Savas-Halilaj, Andreas},
  author = {Smoczyk, Knut},
  title = {Bernstein theorems for length and area decreasing minimal maps},
  journal = {Calc. Var. Partial Differential Equations},
  year = {2014},
  volume = {50},
  pages = {549--577},
  number = {3-4},
  doi = {\doi{10.1007/s00526-013-0646-0}},
  issn = {0944-2669},
}

\bib{SHS14}{article}{
  author = {Savas-Halilaj, Andreas},
  author = {Smoczyk, Knut},
  title = {Homotopy of area decreasing maps by mean curvature flow},
  journal = {Adv. Math.},
  year = {2014},
  volume = {255},
  pages = {455--473},
  doi = {\doi{10.1016/j.aim.2014.01.014}},
  issn = {0001-8708},
}

\bib{Smo12}{incollection}{
  author = {Smoczyk, Knut},
  title = {Mean curvature flow in higher codimension: introduction and survey},
  booktitle = {Global differential geometry},
  publisher = {Springer, Heidelberg},
  year = {2012},
  volume = {17},
  series = {Springer Proc. Math.},
  pages = {231--274},
  doi = {\doi{10.1007/978-3-642-22842-1\_9}},
}

\bib{Smo04}{article}{
  author = {Smoczyk, Knut},
  title = {Longtime existence of the {L}agrangian mean curvature flow},
  journal = {Calc. Var. Partial Differential Equations},
  year = {2004},
  volume = {20},
  pages = {25--46},
  number = {1},
  doi = {\doi{10.1007/s00526-003-0226-9}},
  issn = {0944-2669},
}

\bib{STW16}{article}{
  author = {Smoczyk, Knut},
  author = {Tsui, Mao-Pei},
  author = {Wang, Mu-Tao},
  title = {Curvature decay estimates of graphical mean curvature flow in higher
	codimensions},
  journal = {Trans. Amer. Math. Soc.},
  volume = {368}, 
  year = {2016},
  number = {11},
  pages = {7763--7775},
  doi = {\doi{10.1090/tran/6624}}
}

\bib{TW04}{article}{
  author = {Tsui, Mao-Pei},
  author = {Wang, Mu-Tao},
  title = {Mean curvature flows and isotopy of maps between spheres},
  journal = {Comm. Pure Appl. Math.},
  year = {2004},
  volume = {57},
  pages = {1110--1126},
  number = {8},
  doi = {\doi{10.1002/cpa.20022}},
  issn = {0010-3640},
}

\bib{Wan05}{article}{
  author = {Wang, Mu-Tao},
  title = {Subsets of {G}rassmannians preserved by mean curvature flows},
  journal = {Comm. Anal. Geom.},
  year = {2005},
  volume = {13},
  pages = {981--998},
  number = {5},
  issn = {1019-8385},
  url = {http://projecteuclid.org/euclid.cag/1144438304}
}

\bib{Wan03a}{article}{
  author = {Wang, Mu-Tao},
  title = {On graphic {B}ernstein type results in higher codimension},
  journal = {Trans. Amer. Math. Soc.},
  year = {2003},
  volume = {355},
  pages = {265--271},
  number = {1},
  doi = {\doi{10.1090/S0002-9947-02-03108-2}},
  issn = {0002-9947},
}

\bib{Wan03b}{article}{
  author = {Wang, Mu-Tao},
  title = {Gauss maps of the mean curvature flow},
  journal = {Math. Res. Lett.},
  year = {2003},
  volume = {10},
  pages = {287--299},
  number = {2-3},
  doi = {\doi{10.4310/MRL.2003.v10.n3.a2}},
  issn = {1073-2780},
}

\bib{Wan01b}{article}{
  author = {Wang, Mu-Tao},
  title = {Long-time existence and convergence of graphic mean curvature flow
	in arbitrary codimension},
  journal = {Invent. Math.},
  year = {2002},
  volume = {148},
  pages = {525--543},
  number = {3},
  doi = {\doi{10.1007/s002220100201}},
  issn = {0020-9910},
}

\bib{Wan01a}{article}{
  author = {Wang, Mu-Tao},
  title = {Deforming area preserving diffeomorphism of surfaces by mean curvature
	flow},
  journal = {Math. Res. Lett.},
  year = {2001},
  volume = {8},
  pages = {651--661},
  number = {5-6},
  doi = {\doi{10.4310/MRL.2001.v8.n5.a7}},
  issn = {1073-2780},
}

\end{biblist}
\end{bibdiv}
\vskip 5\baselineskip

\begin{flushleft}	
	\textsc{Felix Lubbe} \\
	\textsc{Department of Mathematics} \\
	\textsc{University of Hamburg} \\
	\textsc{Bundesstr.\ 55}\\
	\textsc{D--20146 Hamburg, Germany} \\
	\textsl{E-mail address:} \textbf{\href{mailto:Felix.Lubbe@uni-hamburg.de}{Felix.Lubbe@uni-hamburg.de}}
\end{flushleft}

\end{document}